\documentclass[11pt]{article}

\usepackage{graphicx,,psfrag}

\usepackage{amssymb,amsmath,amsthm,enumerate}
\usepackage{fullpage}
\newcommand{\url}{}

\newcommand{\dE}{\mathbb{E}}
\newcommand{\dP}{\mathbb{P}}
\newcommand{\dZ}{\mathbb{Z}}
\newcommand{\dN}{\mathbb{N}}
\newcommand{\dR}{\mathbb{R}}
\newcommand{\dC}{\mathbb{C}}

\newcommand{\ev}{\dE}
\newcommand{\pr}{\dP}
\newcommand{\one}{{\IND}}
\newcommand{\veps}{\varepsilon}

\newcommand{\cM}{\mathcal{M}}
\newcommand{\cG}{\mathcal{G}}
\newcommand{\cP}{\mathcal{P}}

\newcommand{\cB}{\mathcal{B}}

\newcommand{\POI}{\mathrm{Poi}}

\newcommand{\wP}{\widehat{P}}

\newcommand{\IND}{\mathbf{1}}
\renewcommand{\deg}{\mathrm{deg}}

\newcommand{\UNI}{\mathrm{uni}}

\newcommand{\RANK}{\mathrm{rank}}
\newcommand{\DIM}{\mathrm{dim}}
\newcommand{\codim}{\operatorname{codim}}


\newcommand\bp[1]{\noindent {\em Proof{#1}.} 
}
\def\ep{\hfill $\Box$}


\newtheorem{theorem}{Theorem}[section]
\newtheorem{definition}[theorem]{Definition}
\newtheorem{lemma}[theorem]{Lemma}
\newtheorem{proposition}[theorem]{Proposition}
\newtheorem{op}[theorem]{Question}
\newtheorem{corollary}[theorem]{Corollary}
\newtheorem{remark}[theorem]{Remark}

\title{Mean quantum percolation}
\author{Charles Bordenave\thanks{Research partially supported by ANR-11-JS02-005-01}
\and Arnab Sen \and B\'alint Vir\'ag\thanks{Research partially supported by the Canada Research Chair program and the NSERC
Discovery Accelerator Supplement}}

\begin{document}
\maketitle

\begin{abstract}
We study  the spectrum of adjacency matrices of random graphs. We develop two techniques to lower bound the mass of the continuous part of the spectral measure or the density of states. As an application, we prove that the spectral measure of bond percolation in the two dimensional lattice contains a non-trivial continuous part in the supercritical regime. The same result holds for the limiting spectral measure of a supercritical  Erd\H{o}s-R\'enyi graph and for the spectral measure of a unimodular random tree with at least two ends. We give examples of random graphs with purely continuous spectrum.
\end{abstract}

\section{Introduction}

This work is devoted to the spectral analysis of adjacency matrices of deterministic and random graphs (the latter is traditionally called quantum percolation).  The motivation comes from three distinct directions: random matrices, random trees, and random Schr\"odinger operators.

\subsection{Random matrices and Erd\H os-R\'enyi graphs}

Wigner introduced the study of random matrices to mathematical physics, and his first paper on the subject was on the density of states. He showed that
the empirical distribution of the eigenvalues of a random symmetric matrices with entries picked from some fixed distribution with exponential moments, converges, after scaling, to the famous Wigner semicircle law.

A particular example is the adjacency matrix of Erd\H os-R\'enyi random graphs on $n$ vetices, where each edge
is present with a fixed probability $p$.  In this case, it is not hard to show that the semicircle limit holds as $np\to\infty$. At the forefront of current research is the  sparse case, when $np\to c\in(0,\infty)$. The limit is quite unlike the Wigner semicircle law: it is supported on the entire real line and has a dense set of atoms, see \cite{MR869300}.
This and related questions have been discussed in the physics literature, see notably Bauer and Golinelli \cite{bauer01,bauerg01}.
When $c\to \infty$, the measure $\mu_c$ rescaled by $c$ converges to the Wigner semicircle law. This suggests, (but by no means implies) that there may be a continuous part for large $c$, which has been an open problem.

Our first theorem establishes this in a strong sense.
\begin{theorem}
The measure $\mu_c$ has a continuous part if and only if $c>1$.
\end{theorem}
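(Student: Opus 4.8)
The plan is to match the threshold $c=1$ with the extinction threshold of the local weak limit of $G(n,c/n)$ and to treat the two regimes separately. By local weak convergence $G(n,c/n)\to\PGW(c)$, together with the accompanying convergence of empirical spectral distributions, $\mu_c=\dE[\mu_{(T,o)}]$ is the expected spectral measure of the adjacency operator $A$ at the root of $(T,o)\sim\PGW(c)$, and $(T,o)$ is unimodular. If $c\le 1$ the branching process is (sub)critical, so $T$ is finite almost surely, $\mu_{(T,o)}$ is a finite sum of atoms, and hence $\mu_c$ is supported on the countable set of eigenvalues of all finite trees; so $\mu_c$ is purely atomic. This disposes of the ``only if'' direction.

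Assume now $c>1$, so the process is supercritical and survives with probability $q>0$. With positive probability the root has at least two children whose subtrees both survive, and on that event $T$ contains two disjoint rays from $o$, hence at least two ends. It therefore suffices to prove the general statement that a unimodular random tree having at least two ends with positive probability has a non-trivial continuous part in its expected spectral measure, and to note that this transfers to $\mu_c=\dE[\mu_{(T,o)}]$: the continuous part of $\mu_c$ dominates $\dE[(\mu_{(T,o)})_{\mathrm{ac}}]+\dE[(\mu_{(T,o)})_{\mathrm{sc}}]$, which is a continuous measure, whereas the atomic part is unaffected by taking the expectation because the atoms of the spectral measure of a unimodular random tree lie in a fixed countable set (they are carried by finitely supported eigenvectors, hence occur at algebraic integers).

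To prove the tree statement, condition on the event of at least two ends and re-root, using unimodularity, so that $o$ sits on a distinguished bi-infinite geodesic $\gamma=(v_n)_{n\in\dZ}$ with $v_0=o$, along which the hanging bushes $B_n$ (the component of $v_n$ in $T$ after deleting the two edges of $\gamma$ at $v_n$) form a $\dZ$-stationary sequence partitioning $V(T)$. Writing $\ell^2(T)=\ell^2(\gamma)\oplus\bigoplus_n\ell^2(B_n\setminus\{v_n\})$ and taking a Schur complement onto $\ell^2(\gamma)$, the resolvent $(A-z)^{-1}$ compressed to the trunk equals $(A_\dZ+\Delta(z)-z)^{-1}$, where $A_\dZ$ is the adjacency operator of $\dZ$ and $\Delta(z)$ is a diagonal, $\dZ$-stationary self-energy whose entry $\Delta_n(z)$ is built from the resolvent of the forest $B_n\setminus\{v_n\}$ seen from the neighbours of $v_n$. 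If some bush along $\gamma$ has a non-trivial absolutely continuous spectrum, i.e.\ $\Im\Delta_n(x+\mathrm{i}0)>0$ on a set of $x$ of positive Lebesgue measure, then the continued-fraction expression for $(A-z)^{-1}_{oo}$ forces $\Im(A-x-\mathrm{i}0)^{-1}_{oo}>0$ on that set, so $\mu_c$ already has an absolutely continuous part. The remaining case is that every bush has absolutely-continuous-free spectrum, whence $\Delta_n(x+\mathrm{i}0)$ is real for a.e.\ $x$ and one is reduced to an ergodic \emph{real, one-dimensional} operator $H(x)=A_\dZ+V(x)$ on the trunk, with $V$ a stationary real field coming from the now spectrally inert bushes.

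This one-dimensional reduced operator is where the real work lies, and it is the main obstacle: a generic ergodic potential on $\dZ$ Anderson-localizes into a purely atomic spectrum, so one cannot conclude abstractly and must exploit the combinatorial geometry of the trunk. Two complementary mechanisms are available, and these are precisely the two techniques the paper develops. When the ``clean'' trunk vertices---those carrying no hanging bush---have positive density, mixing along the stationary trunk produces arbitrarily long clean runs, and on a clean run of length $L$ the vectors $n\mapsto\sin(kn)$ are approximate eigenvectors of $A$ with eigenvalue $2\cos k$ and error $O(1/L)$, uniformly in $k$; this surplus of spectral mass cannot be entirely atomic. In general one argues instead at the level of the density of states via a rank/atom-counting estimate, showing that the total atomic mass of $\mu_c$ is strictly below $1$ (for instance by bounding it by a local functional that, on the two-ends event, is strictly less than $1$). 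Either route yields that the continuous part of $\mu_c$ is strictly positive for $c>1$; combined with the first paragraph, this gives the claimed dichotomy.
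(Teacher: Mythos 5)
Your reduction is on the right track: you correctly translate $\mu_c$ into the expected spectral measure $\mu_\rho$ of the unimodular Galton--Watson tree $\rho=\PGW(c)$, dispose of $c\le 1$ by almost-sure finiteness, and for $c>1$ reduce to the claim that a unimodular tree with at least two ends (with positive probability) has a non-trivial continuous part in its expected spectral measure. That is exactly the structure of the paper's argument. But the proof of that key tree statement is where the proposal breaks down, and you acknowledge as much when you write that the reduced one-dimensional operator ``is where the real work lies.'' The Schur-complement reduction of $A$ onto a random bi-infinite trunk, the case split on whether some bush has a.c.\ spectrum, and the appeal to ``approximate eigenvectors on clean runs'' do not close: as you note, an ergodic one-dimensional operator generically Anderson-localizes, so nothing in the sketch rules out the purely atomic outcome. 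The final paragraph merely gestures at the existence of two mechanisms without actually running either one to a conclusion. There is also a secondary gap in the transfer step: you argue that ``the atomic part is unaffected by taking the expectation because the atoms of the spectral measure of a unimodular random tree lie in a fixed countable set \ldots\ hence occur at algebraic integers,'' but atoms of spectral measures of \emph{infinite} trees need not be carried by finitely supported eigenvectors nor sit at algebraic integers, so this is unjustified. The paper avoids this issue entirely by bounding atoms of $\mu_\rho$ directly.

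What the paper actually does for the $c>1$ direction is quite different in substance. It constructs an \emph{invariant line ensemble} $L$ (a unimodular, vertex-disjoint union of bi-infinite paths) of positive density $\dP(o\in L)>0$ on any unimodular tree with at least two ends (Proposition~\ref{prop:ILE}; for $\PGW(c)$ one first prunes to a bounded-degree subforest and then passes to the core). Theorem~\ref{th:treesILE} then shows, via an invariant $\dZ/k\dZ$-labeling along the lines of $L$ and the monotone-labeling bound of Theorem~\ref{th:tool2}, that $\mu_\rho(\{\lambda\})\le \dP(o\notin L)\,\mu_{\rho'}(\{\lambda\})$ for every $\lambda$, and hence the total atomic mass of $\mu_\rho$ is at most $\dP(o\notin L)<1$. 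This directly gives a continuous part of mass at least $\dP(o\in L)>0$. To complete your proof you would need to supply the line-ensemble construction and the labeling argument (or some other device that bounds the total atomic mass of $\mu_\rho$ strictly below $1$); the resolvent/Schur-complement sketch as written cannot stand in for it.
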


This result is a corollary to our investigation of random trees.

\subsection{Percolation on regular trees}

The spectra of random trees is a very active area of random Schr\"odinger operators: it is in this setting
that the celebrated extended states conjecture was first proved: for a continuous perturbation of the regular tree.

A slightly different perturbation is given by Bernoulli bond percolation with high enough parameter $p$.
Here each edge of the graph of the regular tree $T_d$ is removed independently with probability $1 - p \in [0,1]$.

First, a quick definition: the spectral measure of the adjacency matrix of a (possibly infinite) bounded degree rooted graph is the unique probability measure
whose $k$th moments are given by the number of paths of length $k$. This can be extended to a more general setting, see Section \ref{ss:spectral measure}.

For the spectral measure (without taking expectation), for $p$ close to $1$, Keller \cite{Kelleretal} shows that the component of the root has continuous spectrum with positive probability. It is an open problem
to show when the continuous part appears. However, for expected spectral measure $\mu$, we can show

\begin{theorem} The
critical value for the existence of continuous part in the expected spectral measure  $\mu_c$ for percolation on $d$-regular trees is $1/d$.
\end{theorem}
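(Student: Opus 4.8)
The plan is to split at the percolation threshold. Write $T^{(p)}$ for the connected cluster of the root $o$ in Bernoulli$(p)$ bond percolation on $T_d$; since the moments of the spectral measure at $o$ count closed paths from $o$, and these never leave $T^{(p)}$, the expected spectral measure $\mu$ equals $\dE[\mu_{T^{(p)},o}]$. As $T^{(p)}$ is a Galton--Watson tree whose survival is governed by its offspring mean, $|T^{(p)}| < \infty$ almost surely when $p \le 1/d$. In that case $\mu$ is a countable convex combination of the finitely supported measures $\mu_{T,o}$, with $(T,o)$ ranging over finite rooted trees, hence is supported on a countable set and therefore purely atomic; this settles the ``only if'' direction.

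For the converse, suppose $p > 1/d$, so $q := \dP(|T^{(p)}| = \infty) > 0$ and $\mu = (1-q)\,\mu^{\mathrm{fin}} + q\,\mu^{\mathrm{inf}}$ with $\mu^{\mathrm{fin}}$ purely atomic by the previous paragraph; it suffices to exhibit a continuous part in $\mu^{\mathrm{inf}} = \dE[\mu_{T^{(p)},o} \mid |T^{(p)}| = \infty]$. I would first record two structural facts. The event $\{|T^{(p)}| = \infty\}$ does not depend on which vertex of the cluster is chosen as root, so conditioning on it preserves the mass-transport principle: $(T^{(p)},o)$ conditioned to be infinite is a unimodular random tree. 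And by the spine decomposition of a supercritical Galton--Watson tree conditioned on survival, independent unconditioned copies of the tree hang off the backbone, infinitely many of which are infinite by Borel--Cantelli; hence the conditioned tree has infinitely many ends, in particular at least two, almost surely.

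It then remains to apply the general principle (the main result of the paper, cf.\ the abstract) that a unimodular random tree with at least two ends has a nontrivial continuous part in its expected spectral measure, to $\mu^{\mathrm{inf}}$. To prove that principle I would use the resolvent recursion on trees: writing $G_v(z)$ for the diagonal resolvent entry at $v$ and $R_v^{(u)}(z)$ for the resolvent of the subtree beyond $v$ away from its neighbour $u$, one has $G_o(z) = -\bigl(z + \sum_{v \sim o} R_v^{(o)}(z)\bigr)^{-1}$ with the $R$'s satisfying the analogous recursion, which on a Galton--Watson tree becomes a recursive distributional equation. Since the absolutely continuous part of $\mu^{\mathrm{inf}}$ has density $\tfrac1\pi \dE[\mathrm{Im}\, G_o(x+i0)]$, it is enough to show $\mathrm{Im}\, G_o(x+i0) > 0$ with positive probability on a set of $x$ of positive Lebesgue measure.

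The hard part is precisely this nonvanishing of the boundary values, and it is where two-endedness must enter: one-ended unimodular trees such as the canopy tree have purely point spectrum, so no argument blind to the global end structure can work. I would exploit that at least two ends yields, after re-rooting, a bi-infinite geodesic through $o$; along it one can run a transfer-matrix / continued-fraction analysis, treating the finite bushes hanging off the line as perturbations, coupled with a second-moment or entropy estimate on the law of $\mathrm{Im}\, R_v(x+i0)$ that prevents it from collapsing to $0$ for almost every $x$ --- this is the analytically delicate step, of the same nature as the existence of absolutely continuous spectrum for random Schr\"odinger operators on trees. A possibly more robust alternative is to bound the total atomic mass $\sum_\lambda \mu^{\mathrm{inf}}(\{\lambda\})$ strictly below $1$ directly, expressing it via unimodularity as the expectation of a local (Karp--Sipser-type, finite-rank) quantity and showing that the presence of two ends keeps that quantity below its extremal value.
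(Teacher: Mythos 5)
Your subcritical direction is essentially correct and matches the paper's observation: for $p$ below threshold the cluster of the root is a.s.\ finite, so the expected spectral measure is a countable convex combination of finitely supported measures, hence purely atomic. Your structural setup for the supercritical case is also on target --- conditioning on survival preserves unimodularity, and the spine decomposition shows the conditioned cluster has at least two ends a.s.\ --- so the problem correctly reduces to showing that a unimodular tree with at least two ends has a non-trivial continuous part in its expected spectral measure.

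The gap is precisely in proving that last assertion. Your primary proposal --- the resolvent recursion followed by proving $\operatorname{Im} G_o(x+i0) > 0$ on a set of positive Lebesgue measure --- would establish the strictly stronger statement that $\mu_\rho$ has an \emph{absolutely continuous} component. That is of the same nature as the AC-spectrum problem for random Schr\"odinger operators on trees, which is a notoriously delicate analytic question and is not what the theorem claims; you flag the difficulty yourself without closing it, and your Karp--Sipser-type alternative is left entirely unspecified. The paper's actual route is combinatorial and sidesteps boundary values altogether: it constructs an \emph{invariant line ensemble} $L$ on any unimodular tree with at least two ends, with density $\dP(o \in L) > 0$ (Proposition \ref{prop:ILE}), and then uses the \emph{monotone labeling technique} (Theorem \ref{th:tool2}, specialized to trees as Theorem \ref{th:treesILE}) to bound the total atomic mass of $\mu_\rho$ by $\dP(o \notin L) < 1$. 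The labeling is built by orienting each line of $L$, taking a random gradient mod $k$, and observing that this makes every vertex of $L$ prodigy or bad with the density of bad vertices at most $1/k$; the bound on atoms then follows from a von Neumann-dimension count of eigenspace intersections. To salvage your argument you would need a concrete substitute for Proposition \ref{prop:ILE} plus Theorem \ref{th:treesILE}; the resolvent program as sketched is not known to close and is not the paper's method. (Minor aside: the cluster of percolation on the degree-$d$ regular tree is a unimodular Galton--Watson tree with $P=\mathrm{Bin}(d,p)$, hence $\wP=\mathrm{Bin}(d-1,p)$ and $\widehat\mu=(d-1)p$, so Corollary \ref{cor:treesILE} actually gives threshold $1/(d-1)$; worth reconciling with the stated $1/d$.)
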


This can be generalized to Galton-Watson trees. In fact, we will show that a bounded degree unimodular tree has continuous spectrum as long as
it has at least two ends, see Section \ref{ss:unimodular trees}.

In contrast  Bhamidi, Evans and Sen \cite{MR2956206} prove the limiting spectral measures for various popular models of  random trees, for example uniform random trees and trees generated by preferential attachment scheme,  have a dense set of atoms (this does not rule out the existence of some continuous part).
In \cite{MR2789584}, Lelarge, Salez and the first author show examples of Galton-Watson trees with arbitrary high minimal degree and  an atom at $0$ for the spectral measure.


\subsection{Percolation on Euclidean lattices}

The study of the regularity of the density of states is of prime importance in the literature on random Schr\"odinger  operators.
 The study of random Hamiltonians generated by
percolation on $\mathbb Z^d$ was initiated by De Gennes, Lafore and
Millot \cite{deGennes1959a,deGennes1959b} in the 1950's under
the name of quantum percolation.
The study of the density of states is a preliminary step toward into understanding the
behaviour of the eigenvectors, see e.g. Kirkpatrick and Eggarter \cite{PhysRevB.6.3598}, Chayes et al.\ \cite{MR869300} or Veseli{\'c} \cite{MR2148799}
and references therein. After more than a half-century, it is still  a very active field of research and proving the existence of Anderson delocalization remains the main open challenge in the area. One of the issue of quantum
percolation models is that the lack of regularity of percolation graphs does not allow to use Wegner estimates.

In parallel, the study of the spectral properties of graphs and countable groups has a long history, see Mohar and Woess \cite{MR986363} for an early survey on the matter. In \cite{MR0109367}, Kesten computed the spectral measure of the infinite $d$-regular tree (the Cayley graph of the free group with $d$-generators). This spectral measure is absolutely continuous. It is not always the case, in \cite{MR1866850}, Grigorchuk and {\.Z}uk have proved that the spectral measure of the usual lamplighter group is purely atomic, see also Lehner, Neuhauser and Woess \cite{MR2415315}.
Hence, neither connectivity nor regularity are necessary to guarantee the regularity of the spectral measure.

For site or bound percolation on $\mathbb Z^d$ the expected spectral measure $\mu$ can be defined through moments, spectral theory, or simply as the limit of the empirical eigenvalue distribution on finite boxes, see
Section \ref{ss:spectral measure}.

When the percolation has only finite components, the expected spectral measure is $\mu$ a countable mixture of atomic measures, so it is purely atomic. On the other end, for $p=1$ the percolation is simply $\dZ^d$ rooted at the origin and its spectral measure is absolutely continuous. In fact, it is the convolution of $d$ arcsine distributions, see \cite[Section 7.B]{MR986363}.

\begin{theorem} \label{thm:supercric_perc_Z2}
For bond percolation on $\mathbb Z^2$, the expected spectral measure has a continuous part if and only if $p>p_c$.
\end{theorem}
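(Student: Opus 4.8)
\noindent\emph{Sketch of proof strategy.}
The equivalence has two halves of rather different character. The implication $p\le p_c\Rightarrow$ ``no continuous part'' is soft: for bond percolation on $\dZ^2$ one has $\theta(p_c)=0$, so for $p\le p_c$ there is almost surely no infinite cluster and the component $C_o$ of the origin is almost surely finite. Then $\mu_{C_o,o}$ is finitely supported, and since there are only countably many isomorphism types of possible finite components, $\mu=\dE[\mu_{C_o,o}]$ is a countable mixture of finitely supported measures, all supported on the fixed countable set $\Lambda$ of eigenvalues of finite subgraphs of $\dZ^2$; hence $\mu$ is purely atomic. In particular a continuous part can arise only from the infinite cluster, which is present exactly when $p>p_c$.

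For $p>p_c$ the plan is to apply one of the criteria for lower-bounding the continuous part of the expected spectral measure developed above. It suffices to find, with positive probability over the environment and the root, a local configuration of the unique infinite cluster that forces a nontrivial continuous component. The guiding intuition is that $\dZ^2$ itself has purely absolutely continuous spectrum (the convolution of two arcsine laws), and that the supercritical cluster, a quasi-isometric perturbation of $\dZ^2$, should retain part of it. Concretely one would try to capture an infinite piece of the cluster attached to the rest along few edges: for instance a large clean patch of $\dZ^2$ around the origin joined to $C_\infty$ (an event of probability $q_R>0$ for every radius $R$, by the finite-energy property together with supercriticality), on which the local picture, and hence the moments up to order $2R$, match those of the continuous measure $\mu_{\dZ^2}$; or an infinite tree-like piece hanging from a bridge of $C_\infty$ with at least two ends, so that the fact that a bounded-degree unimodular tree with at least two ends has continuous spectrum can be brought to bear. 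In each case the criterion must upgrade this local information to a lower bound on the continuous mass that does not degenerate as the size parameter grows.

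The step I expect to be the main obstacle is exactly this upgrade. Knowing that $C_o$ resembles $\dZ^2$ near the root, or that it contains an infinite ``waveguide'' such as a bi-infinite line or a two-ended subtree, does not by itself yield a continuous spectral measure at the root: such a sub-structure is joined to the remainder of the cluster along infinitely many edges, and the natural plane-wave trial functions on it leak across those edges, so one cannot simply isolate an infinite pendant subgraph --- a subgraph of any fixed infinite shape has probability zero in $\dZ^2$-percolation. The real work is to make a continuous-part criterion robust enough to extract mass from a structure that is only approximately decoupled from its environment --- equivalently, to obtain genuine control of the resolvent of the infinite cluster, which is the hard core of quantum percolation --- and to do so for every $p>p_c$, not merely for $p$ close to $1$. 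Combining the resulting lower bound with the easy direction then pins the critical value exactly at $p_c$, once more via $\theta(p_c)=0$.
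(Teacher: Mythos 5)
Your easy direction ($p\le p_c\Rightarrow$ purely atomic) is correct and matches the paper's brief remark in the introduction: $\theta(p_c)=0$ for $\dZ^2$, all clusters are finite, and a countable mixture of finitely supported measures is purely atomic. The problem is the hard direction, and here you have, candidly, not supplied a proof: you sketch two strategies (a ``clean patch'' of $\dZ^2$ near the root, or an infinite two-ended pendant subtree), and then correctly explain why neither works as stated. Matching moments up to order $2R$ says nothing about the continuous part of a measure, since any finite collection of moments is realized by a purely atomic measure; and any fixed infinite subgraph shape has probability zero, while a substructure that merely exists somewhere in $C_\infty$ is glued to the rest along infinitely many edges and cannot be decoupled. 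So the ``upgrade'' you flag as the main obstacle is exactly the content of the theorem, and it is missing.

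The paper's actual route is essentially orthogonal to what you propose, and it is worth seeing why it sidesteps your obstacle. It never tries to control the resolvent or to isolate an infinite decoupled piece. Instead it works with finite boxes $\Lambda_n(p)=[0,n]^2\cap\dZ^2$ and applies the minimal path matching criterion (Theorem \ref{th:tool1}), a purely linear-algebraic bound on eigenvalue multiplicities. One takes $I$ and $J$ to be the left- and right-boundary endpoints of a maximal family of vertex-disjoint left-to-right open crossings; planarity forces any two disjoint crossings to be nested, so the matching map is forced and unique, which is the hypothesis Theorem \ref{th:tool1} needs. Each crossing has at least $n+1$ vertices, so the minimal path matching covers $\ge (n+1)\ell_n$ vertices if $\ell_n$ is the number of disjoint crossings; Theorem \ref{th:tool1} then bounds the atomic mass by $1 - (n+1)\ell_n/(n+1)^2 + o(1)$. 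The quantitative input that makes this nontrivial is the lower bound on $\ell_n$: by Menger's theorem $\ell_n$ equals the minimum vertex cut, which is at least a quarter of the minimum edge cut, which by planar duality is the line-to-line first passage time in the dual lattice with i.i.d.\ $\mathrm{Ber}(p)$ edge weights. Kesten's theorem says the FPP time constant is strictly positive exactly when $F(0)<p_c=1/2$, i.e.\ $p>1/2$, so $\ell_n\ge cn$ with high probability, the matching covers a positive fraction of the $n^2$ vertices, and the continuous mass is at least $\tfrac14\nu(\mathrm{Ber}(p))>0$. Passage from finite boxes to $\mu_\rho$ uses L\"uck approximation. The lesson relative to your sketch: replacing resolvent control by a combinatorial multiplicity bound on finite volumes, and replacing ``isolate an infinite waveguide'' by ``count disjoint crossings'', converts the soft geometric intuition into a statement that first passage percolation and duality can actually prove for every $p>p_c$.
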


In this paper, the focus is on the adjacency operator of a graph. The same study could be generalized to discrete Laplacian or combinatorial Laplacian or weighted graphs. In the next section, we discuss unimodular random graphs, a convenient framework in which allow us to discuss all the above spectral  questions. Our contribution for this theory is that we can define the expected spectral measure for unimodular random graphs in complete generality, see Proposition \ref{prop:defspecmeas}.

\subsection{Unimodular random graphs}

We first briefly introduce the theory of local weak convergence of graph sequences and the notion of unimodularity. It was introduced by Benjamini and Schramm \cite{bensch} and has then become a popular topology for studying sparse graphs. Let us briefly introduce this topology, for further details we refer to Aldous and Lyons \cite{aldouslyons}.

A graph $G = (V,E)$ is said to be locally finite if for all $v \in V$, the degree of $v$ in $G$, $\deg_G(v)$, is finite. A {\em rooted graph} $(G,o)$ is a locally finite and connected graph $G = (V,E)$
with a distinguished vertex $o \in V$, the root. Two rooted graphs $(G_i,o_i) =  ( V_i , E_i , o_i )$,  $i \in \{1,2\}$, are \emph{isomorphic} if there exists a bijection $\sigma : V_{1} \to V_{2}$ such that $\sigma ( o_1) = o_2$ and $\sigma ( G_1) = G_2$, where $\sigma$ acts on $E_1$ through $\sigma (  \{ u , v \}  ) = \{ \sigma ( u) , \sigma (v) \}$. We will denote this equivalence relation by $(G_1,o_1) \simeq (G_2, o_2)$. In graph theory terminology, an equivalence class of rooted graph is an {\em unlabeled rooted graph}. We denote by $\cG^*$ the set of unlabeled rooted graphs.

The {\em local topology} is the smallest topology such that for any $g \in \cG^*$ and integer $t \geq 1$, the $\cG^* \to \{0,1\}$ function $
f(G,o) = \IND ( (G,o)_t \simeq g ) $
is continuous, where $(G,o)_t$ is the induced rooted graph spanned by the vertices at graph distance at most $t$ from $o$.  This topology is metrizable and the space $\cG^*$ is separable and complete.

For a finite graph $G = (V,E)$ and $v\in V$, one writes $G(v)$ for the connected component of $G$ at $v$. One defines the probability measure $U(G)\in\cP(\cG^*)$ as the law of the equivalence class of the rooted graph
$(G(o),o)$ where the root $o$ is sampled uniformly on $V$. If $(G_n)_{n \geq 1},$ is a sequence of finite graphs, we shall say that $G_n$ has {\em local weak limit}  $\rho\in\cP(\cG^*)$ if $U(G_n) \to \rho$ weakly in $\cG^*$.  A measure $\rho\in\cP(\cG^*)$ is called {\em sofic} if  there exists a sequence of finite graphs $(G_n)_{n \geq 1},$ whose local weak limit is $\rho$.

The notion of unimodularity can be thought of as invariance under moving the root, but it requires some subtlety to get the definition right.
Towards this end, we define locally finite connected graphs with two roots $(G,o,o')$ and extend the notion of isomorphisms to such structures. We define $\cG^{**}$ as the set of equivalence classes of  graphs $(G,o,o')$ with two roots and associate its natural local topology. A function $f$ on $\cG^{**}$ can be extended to a function on connected graphs with two roots $(G,o,o')$ through the isomorphism classes. Then, a measure $\rho\in\cP(\cG^*)$ is called {\em unimodular} if for any measurable function $f : \cG^{**} \to \dR_+$, we have
\begin{equation}\label{eq:defunimod}
\dE_\rho \sum_{ v \in V} f ( G , o , v) = \dE_\rho \sum_{ v \in V} f ( G , v , o),
\end{equation}
where under $\dP_\rho$, $(G,o)$ has law $\rho$. It is immediate to check that if $G$ is finite then $U(G)$ is unimodular. More generally, all sofic measures are unimodular, the converse is open, for a discussion see \cite{aldouslyons}. It is however known that all unimodular probability measures supported on rooted trees are sofic, see Elek \cite{elek2010}, Bowen \cite{bowen} and Benjamini, Lyons and Schramm \cite{BLS}. We will denote by $\cP_{\UNI} (\cG^*)$ the set of unimodular measures. It is closed under the local weak topology.

Any Cayley graph $G$ of a finitely generated group $\Gamma$ is unimodular (more precisely, for any $v \in \Gamma$, the measure $\rho$ which puts a Dirac mass at the equivalence class of $(G,v)$  is unimodular), see \cite[Section 3]{aldouslyons}.

With a slight abuse of language, we shall say that a random rooted $(G,o)$ is unimodular if the law of its equivalence class in $\cG^*$ is unimodular.

\subsection{The spectral measure of graphs}\label{ss:spectral measure}

Let $V$ be countable and $G = (V,E)$ be a locally finite graph, its {\em adjacency operator}, denoted by $A$, is defined for vectors $\psi \in \ell^ 2 (V)$ with finite support by  the formula
$$
A \psi (u) = \sum_{v : \{ u, v\} \in E } \psi(v).
$$
By construction $A$ is symmetric. Assume furteher that the degrees of vertices are bounded by an integer $d$, then we readily check that $A$ has norm bounded by $d$. Hence, $A$ is a self-adjoint operator. For any $\psi \in \ell^2 (V)$ with $\| \psi \|^ 2_2 = 1$, we may thus define the {\em spectral measure with vector} $\psi$, denoted by $\mu^\psi_A$, as the unique probability measure on $\dR$, such that for all integers $k\geq 1$,
$$
 \int x^ k d \mu^ \psi _ A = \langle \psi , A^ k \psi \rangle.
$$
For example if $|V| = n$ is finite, then $A$ is a symmetric matrix. If $(v_1, \cdots, v_n)$ is an orthonormal basis of eigenvectors associated to eigenvalues $(\lambda_1, \cdots, \lambda_n)$, we find
\begin{equation}\label{eq:defspecmeas}
\mu^ \psi _ A = \sum_{k =1} ^  n \langle v_k , \psi \rangle ^ 2 \delta_{\lambda_k}.
\end{equation}
(In $V$ is not finite, $\mu^ \psi_A$ has a similar decomposition over the resolution of the identity of $A$ but we shall not need this here).

Now, we denote by $e_v (u) = \IND_{\{u = v\}}$, the coordinate vector associated to $v\in V$.  Remark that if two rooted graphs are isomorphic then the spectral measures associated to the coordinate vector for the root (simply called the spectral measure at the root) are equal. It thus makes sense to define $\mu_A^ {e_o}$ for elements of $\cG^ *$. Then, if $\rho \in \cP ( \cG^*)$ is supported on graphs with bounded degrees, we may consider the expected spectral measure of the root :
\begin{equation}\label{eq:defmurho}
\mu_\rho = \dE_\rho \mu^{e_o}_ A.
\end{equation}
In particular, if $|V| = n$ is finite, \eqref{eq:defspecmeas} implies
$$
\mu_{U(G)} = \frac{1}{n} \sum_{k=1} ^ n  \delta_{\lambda_k}.
$$
It is the {\em empirical distribution of the eigenvalues} of the adjacency matrix.

It is not clear a priori how to extend this construction to random graphs without bounded degrees. It can be difficult to check that adjacency operators are essentially self-adjoint (for a criterion of essential self-adjointness of the adjacency operator of trees, see \cite{MR2789584} and for a characterization see Salez \cite[theorem 2.2]{salezphd}).  It turns out however that for unimodular measures, it is always possible to define $\mu_\rho$ without any bounded degree assumption.

\begin{proposition}\label{prop:defspecmeas}
For any $\rho \in \cP_{\UNI}( \cG^ *)$, there exists a unique $\mu_\rho \in \cP( \dR)$ such that 
\begin{enumerate}[(i)]
\item
if the adjacency operator $A$ is $\rho$-a.s.\ essentially self adjoint, then $\mu_\rho$ is given by \eqref{eq:defmurho}. 
\item
if $\rho_n \in \cP_{\UNI}( \cG^ *)$ and $\rho_n \to \rho$, then $\mu_{\rho_n}$ converges weakly to $\mu_\rho$.  
\end{enumerate}
\end{proposition}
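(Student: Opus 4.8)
The plan is to first handle the approximation structure and then extract the measure via a truncation argument. For $r \geq 1$, let $A_r$ denote the operator on $\ell^2(V)$ obtained by keeping only edges both of whose endpoints lie in the ball $B_r(o)$ of radius $r$ about the root; equivalently, $A_r$ is the adjacency operator of a bounded-degree graph (indeed a finite graph, on the component of $o$), so the spectral measure $\mu^{e_o}_{A_r}$ at the root is unambiguously defined by its moments $\langle e_o, A_r^k e_o \rangle$, which count the number of closed walks of length $k$ from $o$ staying inside $B_r(o)$. Since truncating to $B_r(o)$ is itself a measurable operation on $\cG^*$ that takes unimodular measures to unimodular measures (edges are retained in a root-dependent but symmetric way — one checks \eqref{eq:defunimod} directly because $v \in B_r(o) \iff o \in B_r(v)$), the pushforward $\rho^{(r)}$ of $\rho$ under this truncation is unimodular and supported on bounded-degree graphs, so $\mu_{\rho^{(r)}} = \dE_\rho \mu^{e_o}_{A_r}$ is well-defined by \eqref{eq:defmurho}. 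The first step is therefore to \emph{define} $\mu_\rho := \lim_{r \to \infty} \mu_{\rho^{(r)}}$ and argue this limit exists.

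To see the limit exists, I would show the sequence $(\mu_{\rho^{(r)}})_r$ is tight and Cauchy in a suitable metric. Tightness is the delicate point: without a degree bound the operators $A_r$ have norms growing in $r$, so one cannot control moments uniformly. Instead I would use the unimodularity to bound $\dE_\rho \mu^{e_o}_{A_r}\big([-t,t]^c\big)$ uniformly in $r$ for each fixed $t$ large. Here is where I expect the main obstacle: one needs an a priori bound on the spectral mass far from the origin that does not use boundedness of $A$. The natural tool is to compare with the diagonal of the resolvent: for $\eta > 0$, the quantity $\dE_\rho \,\mathrm{Im}\,\langle e_o, (A_r - i\eta)^{-1} e_o\rangle$ is the Cauchy/Poisson transform of $\mu_{\rho^{(r)}}$ at height $\eta$, and using the mass-transport principle together with a resolvent expansion (or the Schur complement / recursive structure when $\rho$ is tree-like, and a general monotone approximation otherwise) one can show these transforms form a Cauchy sequence as $r \to \infty$ for every $\eta > 0$. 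Pointwise convergence of the Cauchy transforms on the upper half-plane, together with tightness, yields weak convergence of $\mu_{\rho^{(r)}}$ to a limiting probability measure, which we call $\mu_\rho$. (Uniqueness of $\mu_\rho$ is then forced, since any object satisfying (ii) must be this limit by taking $\rho_n = \rho^{(n)}$, noting $\rho^{(n)} \to \rho$ locally.)

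It remains to verify properties (i) and (ii). For (i): if $A$ is $\rho$-a.s.\ essentially self-adjoint, then $A_r \to A$ in the strong resolvent sense $\rho$-a.s.\ (this is the standard fact that truncation to exhausting finite subgraphs converges strongly to the closure on a core, valid precisely because of essential self-adjointness), hence $\mu^{e_o}_{A_r} \to \mu^{e_o}_A$ weakly $\rho$-a.s.; since all measures involved are probability measures, bounded convergence gives $\mu_{\rho^{(r)}} \to \dE_\rho \mu^{e_o}_A$, so $\mu_\rho = \dE_\rho\mu^{e_o}_A$ as claimed. For (ii): given $\rho_n \to \rho$ in $\cG^*$, note that truncation to radius $r$ is continuous on $\cG^*$ (it depends only on $(G,o)_{r}$), so $\rho_n^{(r)} \to \rho^{(r)}$ for each fixed $r$; since the maps $\rho \mapsto \mu_{\rho^{(r)}}$ are continuous on bounded-degree unimodular measures (moments are continuous functionals of $(G,o)_r$ uniformly bounded given the truncation radius — though without a global degree bound one again passes through the resolvent transforms, which \emph{are} bounded continuous functions of $(G,o)_r$ at each height $\eta$), a diagonal argument over $r \to \infty$ and $n \to \infty$, controlled by the uniform-in-$r$ tail estimate from Step 2, gives $\mu_{\rho_n} \to \mu_\rho$. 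The crux throughout is the uniform tail/resolvent estimate supplied by the mass-transport principle; everything else is soft convergence bookkeeping.
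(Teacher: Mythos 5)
Your truncation scheme has a structural problem that your key claim does not survive. You truncate to the ball $B_r(o)$ and assert that the pushforward $\rho^{(r)}$ is unimodular, on the grounds that $v\in B_r(o)\iff o\in B_r(v)$. But the map $(G,o)\mapsto (G^{(r)}(o),o)$ is \emph{root-dependent}: the truncated graph seen from a vertex $u$ depends on $u$'s distance to the original root, not only on $u$'s local neighbourhood. Concretely, take $\rho$ to be the Dirac mass at $(\dZ,0)$; then $G^{(r)}(o)$ is a path of length $2r$ always rooted at its midpoint, and the function $f(G,u,v)=\IND(\deg_G(u)=1)$ violates \eqref{eq:defunimod} ($0\ne 2$). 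So $A_r$ does not belong to the von~Neumann algebra $\cM$, and the ``unimodular $\rho^{(r)}$'' you intend to feed into the bounded-degree construction does not exist. Your uniqueness remark, which takes $\rho_n=\rho^{(n)}$ as a sequence of unimodular measures converging to $\rho$, inherits the same flaw.

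The second, related gap is that your argument for Cauchy-ness of $(\mu_{\rho^{(r)}})_r$ is only a sketch of intentions. You invoke ``mass-transport plus resolvent expansion, or Schur complement when tree-like, or a general monotone approximation otherwise,'' but none of these is carried out, and the mass-transport principle does not obviously apply once the truncation is root-dependent. This is precisely the point the paper addresses by a different device: truncate by \emph{degree} (remove every edge incident to a vertex of degree $>n$), which is a root-independent operation, hence yields an operator $A_n\in\cM$ and a genuinely unimodular $\rho_n$. Then a rank inequality in the type~II$_1$ setting, $d_{KS}(\mu_A,\mu_B)\le\RANK(A-B)$ (Lemma~\ref{le:rankineq}), reduces Cauchy-ness to the elementary estimate $\RANK(A_n-A_{n+m})\le\dP_\rho(\deg(o)>n\text{ or some neighbour of }o\text{ has degree}>n)\to 0$, which is exactly the ``uniform-in-$r$ tail bound'' you hoped the mass-transport principle would supply, but in closed form and with no resolvent analysis. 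Your treatments of (i) via strong resolvent convergence and of (ii) via a diagonal argument over the truncation parameter are essentially the paper's, but they sit on a foundation that, as written, does not hold.
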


Proposition \ref{prop:defspecmeas} is equivalent to the following: there is a unique continuous extension of the map $\rho \to \mu_\rho$ defined on the dense set of bounded degree graphs via \eqref{eq:defmurho}. 

If a sequence of finite graphs $(G_n)_{n \ge 1}$ has local weak limit $\rho$ then the empirical distribution of the eigenvalues of their adjacency matrices converges weakly to $\mu_\rho$. In this last case, if moreover for some $\theta >0$ and for all $v \in V(G_n)$, $\deg_{G_n} (v) \leq \theta$, then using L\"uck's approximation (refer to \cite{MR1926649, thom08,ATV}), the convergence can even be reinforced to the pointwise convergence of all atoms\footnote{This uniformly bounded degree assumption can also be lifted by using the truncation argument used in the proof of Proposition \ref{prop:defspecmeas}. We will however not need this refinement.}.

\subsection{Existence of continuous spectrum in unimodular trees}\label{ss:unimodular trees}

In this paper, we will develop two simple tools to prove the existence of a continuous part of the spectral measure of unimodular graphs. In addition
to the examples given in the beginning of the Introduction, will present many cases where these two tools can be applied.

A weighted graph $(G,\omega)$ is a graph $G = ( V,E)$ equipped with a weight function $\omega : V^2 \to \dZ$ such that $\omega (u,v) = 0$ if $u \ne v$ and $\{u,v\} \notin E$. The weight function is edge-symmetric if $\omega(u,v) = \omega(v,u)$ and $\omega (u,u) = 0$. Note that, for edge-symmetric weight functions, the set of edges such that $\omega (e) = k$ spans a subgraph of $G$. A {\em line ensemble} of $G$ is a edge-symmetric weight function $L : V^2 \to \{0,1\}$ such that for all $v \in V$,
$$
\sum_{u} L (u,v) \in \{0,2\}.
$$
We will think of $L$ as a subgraph of $G$ which consists of a union of vertex-disjoint copies of $\mathbb Z$.

It is straightforward to extend the local weak topology to weighted graphs. The definition of unimodularity carries over naturally  to the weighted graphs (see the definition of unimodular network in  \cite{aldouslyons}). Now, consider a unimodular graph $(G,o)$. If, on an enlarged probability space, the weighted graph $(G,L,o)$ is unimodular and $L$ is a.s.\ a line ensemble then we shall say that $L$ is {\em an invariant line ensemble} of $(G,o)$. We call $\pr(o\in L)$ the {\em density} of $L$.


 \begin{theorem}\label{th:treesILE}
Let $(T,o)$ be a unimodular tree with law $\rho$. If $L$ is an invariant line ensemble of $(T,o)$ then  for each real $\lambda$,
\[ \mu_\rho (\lambda)   \leq \dP ( o \notin L ) \mu_{\rho'} (\lambda)\]  where, if $\dP ( o \notin L) >0$, $\rho'$ is the law of the rooted tree $( T \backslash L , o) $ conditioned on the root $o \notin L$.  In particular, the total mass of atoms of $\mu_\rho$  is bounded above by $\dP ( o \notin L)$.
 \end{theorem}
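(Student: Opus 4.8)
The plan is to read both sides as von Neumann (``expected'') dimensions attached to the adjacency operator, and to exploit the single fact that makes bi-infinite paths special: the adjacency operator of $\dZ$ has purely absolutely continuous spectrum. First I would reduce to uniformly bounded degree, via Proposition~\ref{prop:defspecmeas}(ii) and the truncation argument used in its proof — a technical point I would not dwell on, though it requires care so that the atom masses (not merely the measures) pass to the limit. Then $A:=A_T$ and $A':=A_{T\setminus L}$ are bounded self-adjoint, $P_\lambda:=\IND_{\{\lambda\}}(A)$ and $P'_\lambda:=\IND_{\{\lambda\}}(A')$ are genuine spectral projections, and, writing $V_0$ for the set of vertices not covered by $L$ and $\Pi_0$ for the orthogonal projection of $\ell^2(V)$ onto $\ell^2(V_0)$,
\[
\mu_\rho(\{\lambda\})=\dE_\rho\langle P_\lambda e_o,e_o\rangle=:\tau(P_\lambda),\qquad
\dP(o\notin L)\,\mu_{\rho'}(\{\lambda\})=\dE_\rho\big[\IND_{\{o\notin L\}}\langle P'_\lambda e_o,e_o\rangle\big]=\tau(\Pi_0 P'_\lambda\Pi_0),
\]
where $\tau$ is the unimodular trace on equivariant operators, for which $\tau(XY)=\tau(YX)$ is precisely the mass-transport identity~\eqref{eq:defunimod}. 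So the statement reduces to the operator inequality $\tau(P_\lambda)\le\tau(\Pi_0 P'_\lambda\Pi_0)$; the bound on the total mass of atoms then follows by summing over the countably many atoms of $\mu_\rho$ together with $\sum_\lambda\mu_{\rho'}(\{\lambda\})\le1$.

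The heart of the matter is the following structure. Since $L$ is a.s.\ a disjoint union of copies of $\dZ$, its adjacency operator $A_L$ (extended by $0$ on $\ell^2(V_0)$) restricts on $\ell^2(V_L)$ to a direct sum of copies of the adjacency operator of $\dZ$; the latter has purely a.c.\ spectrum, so $A_{\dZ}-\lambda$ is injective with dense range for every real $\lambda$. Moreover, because $T$ is a tree, deleting the edges of $L$ cuts $T$ only along the lines and nowhere else: distinct vertices of one line lie in distinct components of $T\setminus L$, and each off-line edge at a line-vertex leads into a branch containing no further vertex of that line. Hence, if $\psi\in\ker(A-\lambda)$, then along each line $\ell=(x_i)_{i\in\dZ}$ the sequence $(\psi(x_i))_i\in\ell^2(\dZ)$ is the \emph{unique} $\ell^2$ solution of the forced equation $(A_{\dZ}-\lambda)(\psi|_\ell)=-f^\ell$, with $f^\ell_i=\sum_{w\sim x_i,\,w\notin\ell}\psi(w)$; the line-coordinates of an eigenvector are thus solved out of the rest. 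In block form $\ell^2(V)=\ell^2(V_0)\oplus\ell^2(V_L)$, write $A=\left(\begin{smallmatrix}A_{00}&B^{*}\\B&A_{LL}\end{smallmatrix}\right)$ and $A'=\left(\begin{smallmatrix}A_{00}&B^{*}\\B&E_{LL}\end{smallmatrix}\right)$ with $A_{LL}=E_{LL}+A_L$. After a regularization $\lambda\rightsquigarrow\lambda+i\epsilon$ (needed since $A_{\dZ}-\lambda$ is not boundedly invertible on $[-2,2]$), the solvability above identifies the $\lambda$-kernel of $A$ with the kernel of the Schur complement $S(\lambda)=(A_{00}-\lambda)-B^{*}(A_{LL}-\lambda)^{-1}B$ on $\ell^2(V_0)$; running the same with $E_{LL}$ in place of $A_{LL}$ for $A'$ and comparing, one aims to bound $\tau(P_\lambda)$ by $\tau(\Pi_0 P'_\lambda\Pi_0)$, the intended mechanism being that inserting the point-spectrum-free operator $A_L$ inside the Schur complement can only destroy $\lambda$-eigenvectors. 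Unimodularity, through $\tau(XY)=\tau(YX)$, is what lets one run this ``kernels inject into kernels'' comparison at the level of traces/dimensions and translate the $\tau$-quantities back into the $\rho,\rho'$ form.

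I expect the main obstacle to be exactly that last point: the inequality is \emph{false} pointwise — a deterministic (necessarily non-unimodular) tree can carry a line-ensemble-like structure together with an $\ell^2$ eigenvector supported on the lines (e.g.\ two lines joined by one edge, with eigenvalue $\sqrt5$) — so one cannot argue component by component and must work throughout with the unimodular trace and von Neumann dimension, where $\tau(XY)=\tau(YX)$ replaces ordinary rank arithmetic. Combined with the $\lambda+i\epsilon$ regularization, this forces the comparison of $\lambda$-kernels to be extracted by an $\epsilon\downarrow0$ limit, presumably via Herglotz/Stieltjes-transform estimates on the relevant diagonal resolvent entries; and one must keep careful track of which kernels persist, in order to reach the sharp bound $\dP(o\notin L)\mu_{\rho'}(\lambda)$ rather than the crude $\tau(P_\lambda)\le\dP(o\notin L)$ that the bare restriction $\psi\mapsto\psi|_{V_0}$ already gives.
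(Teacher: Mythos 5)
Your proposal takes a genuinely different route from the paper's, but the crucial step is missing. The paper proves Theorem~\ref{th:treesILE} by orienting the lines of $L$, building an invariant $\dZ/k\dZ$-valued labeling whose discrete gradient follows the orientation along $L$ and is zero across non-line edges, observing that this makes every off-line vertex level and almost every line vertex prodigy (with a bad set of density $O(1/k)$), applying the monotone labeling technique (Theorem~\ref{th:tool2}), using that the level subgraphs of different labels are non-adjacent so that the $A_j$ sum to the adjacency of $T\setminus L$, and finally sending $k\to\infty$. You instead reformulate the claim as the trace inequality $\tau(P_\lambda)\le\tau(\Pi_0 P'_\lambda\Pi_0)$ and attempt a Schur-complement argument. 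The gap is precisely the step you flag as the obstacle: you assert that inserting the point-spectrum-free $A_L$ into the Schur complement $S_L(\lambda)=(A_{00}-\lambda)-B^*(A_{LL}-\lambda)^{-1}B$ ``can only destroy $\lambda$-eigenvectors,'' but that assertion is the entire theorem, and no general principle of this kind exists; the kernel of a Schur complement is not monotone under perturbations of the diagonal block, even in trace. The correct intuition you identify --- that along each line the $\ell^2$ solution of $(A_\dZ-\lambda)\psi|_\ell=-f^\ell$ is unique, so line-values of an eigenvector are determined by off-line values --- gives only that $f\mapsto f|_{V_0}$ is injective on $\ker(A-\lambda)$, hence the crude bound $\tau(P_\lambda)\le\dP(o\notin L)$, which as you note is too weak to control the total atom mass. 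To reach the sharp bound one must relate $\ker(A-\lambda)$ to $\ker(A_{00}-\lambda)$, and no such map or dominated comparison appears in your sketch; the $\epsilon\downarrow0$ regularization makes this harder still, since von Neumann dimensions of kernels are merely upper semicontinuous and a Herglotz estimate would need to be made quantitative.

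Two smaller remarks. In the theorem statement $T\setminus L$ removes the \emph{vertices} of $L$ (the proof of unimodularity of $\rho'$ in \S\ref{subsec:stab} makes this explicit), so $A'=A_{00}$ acts on $\ell^2(V_0)$ alone and your $E_{LL}$ block does not occur; this cleans up your block formulation but does not close the gap. And your observation that the inequality is false pointwise --- two copies of $\dZ$ joined by a single edge do carry an $\ell^2$ eigenvector at $\pm\sqrt5$ --- is correct and instructive: it is exactly the reason the paper must work with an invariant labeling and von Neumann dimension, and why your approach cannot hope to argue component by component.
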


We will check in \S \ref{subsec:stab} below that the measure $\rho'$ is indeed unimodular. As a consequence, if $(T,o)$ has an invariant line ensemble such that $\dP( o \in L) =1$ then $\mu_\rho$ is continuous.  Our next result gives the existence of invariant line ensemble for a large class of  unimodular trees. We recall that for a rooted tree $(T, o)$, a topological  end is just an infinite non-backtracking  path in $T$ starting from $o$.

\begin{proposition} \label{prop:ILE}
Let $(T,o)$ be a unimodular tree. If $T$ has at least two topological ends with positive probability, then $(T,o)$ has an invariant line ensemble $L$ with positive density: $\dP ( o \in L) >0$. Moreover, we have the following lower bounds.
\begin{enumerate}[(i)]
\item{} $\dP ( o \in L) \ge \frac{1}{6}\frac{(\ev\, \deg(o)-2)^2}{\ev\, \deg(o)^2}$ as long as the denominator is finite.
\item{} Let $q$ be the probability
that $T\setminus \{o\}$ has at most one infinite component.  If $\deg(o)\le d$ a.s., then $\dP ( o \in L) \ge \frac{1}{3} \,{(\ev\, \deg(o)-2q)}/{d}$.
\end{enumerate}
\end{proposition}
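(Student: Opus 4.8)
\emph{Reduction to the trunk.} I would first replace $T$ by its \emph{trunk} $T'=T'(T)$, the subgraph spanned by the vertices lying on some bi-infinite geodesic of $T$; equivalently $v\in V(T')$ iff $T\setminus\{v\}$ has at least two infinite components, and an edge of $T$ lies in $T'$ iff both of its endpoints do. The assignment $T\mapsto T'$ is equivariant, so $(T',o)$ conditioned on $\{o\in T'\}$ is again a unimodular tree (unimodularity being preserved under this kind of equivariant subgraph operation), now of minimum degree $\ge 2$. Also $T'\ne\emptyset$ exactly when $T$ has at least two ends, hence with positive probability; and a unimodular graph carries no nonempty invariant vertex set of zero intensity (transport a unit of mass from each vertex to a nearest point of $V(T')$ and apply the mass-transport principle), so $\dP(o\in T')>0$. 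Finally, any invariant line ensemble of $(T',o)$, extended by $L\equiv0$ off $T'$, is an invariant line ensemble of $(T,o)$, and since $\{o\in L\}\subseteq\{o\in T'\}$ we have $\dP(o\in L)=\dP(o\in T')\,\dP(o\in L\mid o\in T')$. Hence it suffices to build, on an arbitrary unimodular tree of minimum degree $\ge 2$, an invariant line ensemble of positive (and quantitatively controlled) density.

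\emph{The ensemble on the trunk.} If $\deg_{T'}(o)=2$ almost surely then $T'$ is itself a vertex-disjoint union of bi-infinite lines and one takes $L=T'$. Otherwise I would build $L$ as follows. Every unimodular tree is sofic, so there are finite graphs $G_n\to(T',o)$ in the local weak sense; let $L_n$ be a maximum linear forest of $G_n$ (a cycle-free subgraph with all degrees $\le 2$ and maximal edge count, ties broken by i.i.d.\ marks), and pass to a subsequential local weak limit $(G_n,L_n)\to(T',o,L)$, which is unimodular. The components of $L$ are paths; discarding those that are not bi-infinite (they carry vanishing density in the limit) leaves an invariant line ensemble. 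The reason one cannot shortcut this with a local construction is that, if one tried to pick at each vertex an unordered pair of incident edges using only i.i.d.\ local data and then kept an edge iff it lay in the chosen pair at both endpoints, the choices would fail to match at each branch vertex with probability bounded away from $0$ --- a one-line computation on the $3$-regular tree shows the resulting ``agreement chain'' through the root dies geometrically --- so the resulting $L$ would be a.s.\ empty; taking $L_n$ maximal (equivalently, $G_n\setminus L_n$ a near-perfect ``$\deg(v)-2$'' degree-constrained subgraph) is what keeps the limiting density positive.

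\emph{The quantitative bounds.} These I would extract from the mass-transport principle applied to the construction. Writing $B(o)$ for the number of infinite branches of $T$ at $o$, one has $\deg_{T'}(o)=B(o)\,\mathbf 1\{B(o)\ge2\}$, and a short transport computation (send a unit of mass from each vertex to each neighbour $y$ for which the $y$-side of the edge to $y$ is finite) gives $\dE[\deg_T(o)]-\dE[B(o)]=\dP(B(o)=1)$, whence $\dE[\deg_{T'}(o)]=\dE[\deg_T(o)]-2\,\dP(B(o)=1)\ge\dE[\deg_T(o)]-2q$ with $q=\dP(B(o)\le1)$. Spreading the contribution of each branch vertex over its $\le d$ incident edges, and absorbing the (bounded) loss from discarding finite components into a factor $1/3$, yields $\dP(o\in L)\ge\tfrac13\big(\dE[\deg(o)]-2q\big)/d$, i.e.\ (ii). For (i), where only $\dE[\deg(o)^2]<\infty$ is assumed, I would lower bound the density of $L$ by a count whose first moment is of order $\dE[\deg(o)]-2$ and whose second moment is of order $\dE[\deg(o)^2]$; a Paley--Zygmund / Cauchy--Schwarz estimate then produces $\dP(o\in L)\ge\tfrac16\big(\dE[\deg(o)]-2\big)^2/\dE[\deg(o)^2]$.

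\emph{Main obstacle.} The heart of the argument is the second step: since no local rule works even on the $3$-regular tree, the construction has to be built from a genuinely global invariant object, and the real work is to show that a quantitatively controlled positive fraction of its edges survives both the infinite-volume limit and the pruning of the finite components. Once that is in place, parts (i) and (ii) are mass-transport bookkeeping.
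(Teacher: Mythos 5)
Your trunk reduction is sound and closely parallels the paper's reduction to the core (the paper's Corollary labelled ``Removing bushes''), including the mass-transport identity $\dE\deg_C(o)=\dE\deg(o)-2\dP(o\notin C)$. The departure, and the gap, is in the main construction.

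The paper does not go through soficity at all. It gives an \emph{explicit} invariant construction (Lemma~\ref{le:regHam}): on the $d$-regular tree, attach i.i.d.\ uniform marks $\xi(u,v)$ to oriented edges, have the root select its two $\xi$-favorite neighbours, and propagate outward by the rule that a vertex entered by one $L$-edge selects its single favorite descendant while a vertex not yet in $L$ selects its two favorites; invariance is then verified by a direct mass-transport computation exploiting the isomorphism of the subtrees $T^v_u$. Your remark that ``no local rule works'' rests on analysing one specific naive rule (each endpoint independently picks an unordered pair of incident edges and one keeps the agreements); that rule indeed fails, but the paper's \emph{coordinated, directional} rule succeeds and yields density $1$ on the regular tree. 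The general bounded-degree case is then reduced to the regular case by contracting degree-$2$ chains (Lemma~\ref{l:2ktree}) and iteratively splitting high-degree vertices into degree-$3$ and degree-$2$ groups (Proposition~\ref{p:le}), with the explicit constants in (i) and (ii) dropping out of this recursion plus a truncation/mass-transport bookkeeping step.

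The genuine gap in your proposal is the assertion that the non-bi-infinite components of the limiting maximal linear forest ``carry vanishing density.'' A maximum linear forest $L_n\subset G_n$ is a disjoint union of finite paths; in the local weak limit a positive density of vertices can perfectly well land on finite paths or one-ended rays, and after you discard those you may be left with nothing. Maximizing the edge count of $L_n$ does not control the \emph{number of components} of $L_n$ (equivalently the density of degree-$1$ vertices of $L_n$), which is what you actually need to bound. You flag this yourself as ``the real work,'' but without it the existence statement is unproven, and the explicit constants in (i) and (ii) are even further out of reach: the Paley--Zygmund sketch for (i) does not match any computation you have set up, and (ii) would require a quantitative version of the unproven discarding step. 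The paper sidesteps all of this by never leaving the infinite unimodular setting.
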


One of the natural example where the conditions of Proposition~\ref{prop:ILE} are not satisfied is the infinite skeleton tree which consists of a semi-infinite line $\mathbb Z_+$ with i.i.d.\ critical Poisson Galton-Watson trees attached to each of the vertices of $\dZ_+$.  It is the local weak limit of the uniform trees on $n$ labeled vertices.

Let $P \in \cP(\dZ_+)$ with positive and finite mean. The unimodular Galton-Watson tree with degree distribution $P$ (commonly known as size-biased Galton-Watson tree) is the law of the random rooted tree obtained as follows. The root has a number $d$ of children sampled according to $P$, and, given $d$, the subtrees of the children of the root are independent Galton-Watson trees with offspring distribution
\begin{equation}\label{eq:defwP}
\wP ( k ) =   \frac{ (k+1) P(k+1) }{ \sum_{\ell} \ell P (\ell)}.
\end{equation}
 These unimodular trees appear naturally as a.s.\ local weak limits of random graphs with a given degree distribution,  see e.g.\ \cite{MR2656427,MR2643563,notesRG}. It is also well known that the Erd\H{o}s-R\'enyi  $G(n,c/n)$ has a.s.\  local weak limit the Galton-Watson tree with offspring distribution  $\POI(c)$. Note that if $P$ is $\POI(c)$ then $\wP = P$. The percolation on the hypercube $\{0,1\}^ n$ with parameter $c/n$ has the same a.s.\  local weak limit.

If $P$ has first moment $\mu_1$ and second moment $\mu_2$, then the first moment of $\wP$ is $\widehat \mu = (\mu_2 - \mu_1 )/ \mu_1$. If $P  \ne \delta_2$ and $\widehat \mu \leq 1$, then the unimodular Galton-Watson tree is a.s. finite. If $\widehat \mu > 1$ ($\widehat \mu = \infty$ is allowed), the tree is infinite with positive probability. Proposition \ref{prop:ILE} now implies the following phase transition exists for the existence of a continuous part  in the spectral measure.

\begin{corollary}\label{cor:treesILE}
Let $\rho$ be a unimodular Galton-Watson tree with degree distribution $P \ne \delta_2$. The first moment of $\wP$ is denoted by $\widehat \mu$. Then $\mu_{\rho}$ contains a non-trivial continuous part if and only if $\widehat \mu > 1$.
\end{corollary}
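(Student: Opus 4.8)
The plan is to deduce both implications from the tools already in place, treating the two directions separately; essentially all the work has been done in Theorem~\ref{th:treesILE} and Proposition~\ref{prop:ILE}, and the corollary is a matter of assembling them correctly.

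\textbf{If $\widehat\mu\le1$, there is no continuous part.} Since $P\ne\delta_2$ and $\widehat\mu\le1$, the dichotomy recorded just above the statement shows that the unimodular Galton--Watson tree $(T,o)$, with law $\rho$, is a.s.\ finite. For a finite tree the adjacency operator $A$ is a finite symmetric matrix, hence trivially essentially self-adjoint, and by \eqref{eq:defspecmeas} the spectral measure at the root $\mu^{e_o}_A$ is a finitely supported atomic measure. Proposition~\ref{prop:defspecmeas}(i) then gives $\mu_\rho=\dE_\rho\mu^{e_o}_A$. Because the set of isomorphism classes of finite rooted trees is countable, this expectation is a countable convex combination $\mu_\rho=\sum_t\rho(\{t\})\,\mu^{e_o}_{A(t)}$ of atomic measures, hence itself purely atomic; so $\mu_\rho$ has no continuous part.

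\textbf{If $\widehat\mu>1$, a continuous part exists.} The first step is to check that $T$ has at least two topological ends with positive probability. Writing $D\sim P$, the condition $\widehat\mu=(\mu_2-\mu_1)/\mu_1>1$ is equivalent to $\dE[D(D-2)]>0$; since $t(t-2)\le0$ for $t\in\{0,1,2\}$, this forces $P(\{k\ge3\})>0$, so there is some $d\ge2$ with $P(d)>0$. On the other hand an offspring law of mean $\widehat\mu>1$ (finite or infinite) has extinction probability strictly below $1$, so a Galton--Watson tree with offspring law $\wP$ is infinite with probability $s>0$. Conditioning on $\deg(o)=d$, the $d$ subtrees hanging from $o$ are i.i.d.\ Galton--Watson trees with offspring law $\wP$, so with probability at least $s^2>0$ at least two of them are infinite, and on that event $T$ contains two non-backtracking rays from $o$ with distinct first edges, i.e.\ at least two topological ends. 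Thus $\dP(T\text{ has }\ge2\text{ ends})\ge P(d)\,s^2>0$. Now Proposition~\ref{prop:ILE} applies --- its qualitative conclusion requires no bounded-degree hypothesis, which is what lets us handle $\widehat\mu=\infty$ --- yielding an invariant line ensemble $L$ of $(T,o)$ with $\dP(o\in L)>0$, i.e.\ $\dP(o\notin L)<1$. Feeding $L$ into Theorem~\ref{th:treesILE} and summing the inequality $\mu_\rho(\{\lambda\})\le\dP(o\notin L)\,\mu_{\rho'}(\{\lambda\})$ over all atoms $\lambda$ shows that the total mass of the atomic part of $\mu_\rho$ is at most $\dP(o\notin L)<1$. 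Hence the continuous part of $\mu_\rho$ carries mass at least $\dP(o\in L)>0$, completing this direction.

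I do not expect a real obstacle here; the only points needing a little care are the edge cases. The hypothesis $P=\delta_2$ is excluded precisely because it is the case $\wP=\delta_1$, where $T$ is the two-sided line, $\widehat\mu=1$, and $\mu_\rho$ is the purely continuous arcsine law --- a genuine counterexample to the claimed equivalence, so its exclusion is necessary. The case $\widehat\mu=\infty$ is the other point to watch: there the quantitative lower bounds in Proposition~\ref{prop:ILE} degenerate, but its qualitative conclusion $\dP(o\in L)>0$ is unaffected, and that is all that is used above. Everything else is the standard branching-process fact that mean greater than $1$ gives positive survival probability.
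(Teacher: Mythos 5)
Your argument follows exactly the paper's intended route: the text just above the corollary supplies the branching-process dichotomy, Proposition~\ref{prop:ILE} supplies the invariant line ensemble once there are at least two ends, and Theorem~\ref{th:treesILE} converts a positive-density line ensemble into a bound on the total atomic mass. Your extra care with the ``two ends'' step and with $\widehat\mu=\infty$ is sound: indeed $\widehat\mu>1$ is equivalent to $\dE[D(D-2)]>0$, hence forces $P(\{k\geq 3\})>0$, and positive survival probability for $\wP$-offspring then gives two disjoint infinite subtrees with positive probability, as you say.

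There is, however, a subtle edge case shared between your proof and the paper's statement. You correctly observe that $\wP=\delta_1$ is the problematic configuration, but you assert this happens only when $P=\delta_2$. In fact $\wP=\delta_1$ holds precisely when $P$ is supported on $\{0,2\}$. If $P=p_0\delta_0+p_2\delta_2$ with $p_0,p_2>0$ and $p_0 + p_2 = 1$, then $P\neq\delta_2$ and $\widehat\mu=(4p_2-2p_2)/(2p_2)=1$, yet the unimodular Galton--Watson tree is a single vertex with probability $p_0$ and is $\dZ$ with probability $p_2$; accordingly $\mu_\rho=p_0\,\delta_0+p_2\,\mu_{\mathrm{arcsine}}$ has a continuous part of mass $p_2>0$. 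This contradicts the ``only if'' implication as stated, and the step in your first paragraph --- ``the unimodular Galton--Watson tree is a.s.\ finite'' --- fails for such $P$, since a Galton--Watson tree with offspring law $\delta_1$ is $\dZ_+$, not finite, even though $\widehat\mu\le 1$. The hypothesis in the corollary (and in the dichotomy you cite) should be that $P$ is not supported on $\{0,2\}$ (equivalently $\wP\neq\delta_1$) rather than merely $P\neq\delta_2$; with that strengthening your proof is complete and matches the paper's.
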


Note that for some choices of $P$, it is false that the total mass of the atomic part of $\mu_\rho$ is equal to the probability of extinction of the tree, it is only a lower bound (see \cite{MR2789584}).

Let us conclude the intoduction with a few open questions.

\subsection{Open questions}

\begin{op}
Consider a unimodular Galton-Watson tree with degree distribution $P$ with finite support and $P(0)=P(1)=0$. Does the expected spectral measure have only finitely many atoms?
\end{op}

Theorem \ref{thm:supercric_perc_Z2} naturally inspires the following question. We strongly believe that the answer is yes.

\begin{op}
Does supercritical bond percolation on $\mathbb Z^d$ have a continuous part in its expected spectral measure for every $d \geq 2$?
\end{op}

In view  of the result of Grigorchuk and {\.Z}uk  \cite{MR1866850} on the lamplighter group, the next problem has some subtlety
\begin{op}
Is there some monotonicity  in the weights of the atoms of the spectral measure (for some non-trivial partial order on unimodular measures)?
\end{op}

Our main results concern percolation on lattices and trees. It motivates the following  question.
\begin{op}
What can be said about the regularity of the spectral measure for other nonamenable/hyperbolic graphs and for other planar graphs (such as the uniform infinite planar triangulation in Angel and Schramm \cite{MR2013797})?
\end{op}

We have seen that regular trees with degree at least 2 contain invariant line ensembles with density 1. A quantitative version of this would be that if the degree is concentrated, then the density is close to 1. Based on the last part of Proposition \ref{prop:ILE}.
the following formulation is natural.

\begin{op}
Is there a function $f$ with $f(x)\to 1$ as $x\to 1$ so that
every unimodular tree of maximal degree $d\ge 2$ contains and invariant line ensemble with density
at least $f(\ev \deg(o)/d)$?
\end{op}

Two open questions (Questions \ref{q:mi1} and \ref{q:mi2}) can be found in Section \ref{sec:trees}.

\section{The monotone labeling technique}
\label{subsec:tool2}

In this section we will use a carefully chosen labeling of the vertices of a graph to  prove regularity of  its spectrum. The intuition being that a labeling gives an order to solve the eigenvalue equation at each vertex.

\begin{definition}
Let $G = (V,E)$ be a graph. A map $\eta : V \to \dZ$ is a labeling of the vertices of $G$ with integers.  We shall call a vertex $v$
\begin{itemize}
\item[(i)] {\bf prodigy} if it has a neighbor $w$ with $\eta(w)<\eta(v)$ so that all other neighbors of $w$ also have label less than $\eta(v)$,
\item[(ii)] {\bf level} if not prodigy and if all of its neighbors have the same or lower labels,
\item[(iii)] {\bf bad} if none of the above holds.
\end{itemize}
\end{definition}

%

\paragraph{Finite graphs.} We start with the simpler case of finite graphs.

\begin{theorem}\label{th:tool2f}
Let $G$ be a finite graph, and consider a labeling $\eta$ of its vertices with integers.
Let $\ell,b$ denote the number of level and bad vertices, respectively. For any eigenvalue $\lambda$ with multiplicity $m$ we have, if $\ell_{j}$ is the multiplicity of the eigenvalue $\lambda$ in the subgraph induced by level vertices with label $j$,
$$
m \leq b + \sum_j \ell_{j}.
$$
Consequently, for any multiplicities $m_1,\ldots,m_k$ of distinct eigenvalues we have
$$
m_1+\ldots +m_k  \le kb +\ell.
$$
\end{theorem}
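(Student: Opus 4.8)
The plan is to fix an eigenvalue $\lambda$, write $E_\lambda=\ker(A-\lambda I)\subseteq \dR^V$ for its eigenspace (so $\dim E_\lambda=m$), and build an injective linear map $\Phi$ from $E_\lambda$ into a vector space of dimension $b+\sum_j\ell_j$. Morally, $\Phi$ records the ``free choices'' one makes when solving the eigenvector equation $A\psi=\lambda\psi$ one label at a time, in increasing order of label. For notation, let $B$ be the set of bad vertices, $L_j$ the set of level vertices of label $j$, $H_j=G[L_j]$ the subgraph they induce, and $E_\lambda^{(j)}=\ker(A_{H_j}-\lambda I)\subseteq\dR^{L_j}$, so that $\dim E_\lambda^{(j)}=\ell_j$; fix once and for all a linear decomposition $\dR^{L_j}=E_\lambda^{(j)}\oplus C_j$.

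The first step is to read off two local consequences of $A\psi=\lambda\psi$. If $v$ is a prodigy with witness $w$ — meaning $\eta(w)<\eta(v)$ and every neighbour of $w$ other than $v$ has label less than $\eta(v)$ — then rewriting the equation at $w$, namely $\lambda\psi(w)=\sum_{u\sim w}\psi(u)$, so as to isolate $\psi(v)$ shows that $\psi(v)$ is a fixed linear function of the values of $\psi$ on vertices of label strictly less than $\eta(v)$. If instead $v\in L_j$, all neighbours of $v$ have label at most $j$, and moving the neighbours lying in $L_j$ to the left-hand side rewrites the equation at $v$ as
\[
\bigl((A_{H_j}-\lambda I)\,\psi|_{L_j}\bigr)(v)=-\sum_{\substack{w\sim v\\ w\notin L_j}}\psi(w),
\]
whose right-hand side only involves the values of $\psi$ on vertices of label $<j$ and on vertices of label $j$ that are bad or prodigy. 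Eliminating the label-$j$ prodigy values via the previous identity (and recursing, which terminates since $G$ is finite), we obtain for each $j$ a vector $d_j=d_j(\psi)\in\dR^{L_j}$ that depends linearly only on the restriction of $\psi$ to $B$ and to the level vertices of label $<j$, and such that $(A_{H_j}-\lambda I)\,\psi|_{L_j}=d_j(\psi)$.

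Now define $\Phi\colon E_\lambda\to\dR^{B}\oplus\bigoplus_j E_\lambda^{(j)}$ by sending $\psi$ to the pair consisting of $\psi|_B$ and, for each $j$, the $E_\lambda^{(j)}$-component of $\psi|_{L_j}$ relative to the chosen decomposition $\dR^{L_j}=E_\lambda^{(j)}\oplus C_j$. The target has dimension $b+\sum_j\ell_j$, so the first assertion follows once $\Phi$ is shown injective, i.e.\ once we show that $\psi|_B=0$ together with the vanishing of the $E_\lambda^{(j)}$-component of $\psi|_{L_j}$ for every $j$ forces $\psi\equiv0$. I would prove this by induction on the label: assume $\psi$ vanishes on all vertices of label $<j$. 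Then $\psi$ vanishes on the label-$j$ prodigies by the prodigy identity (its input is zero), it vanishes on the label-$j$ bad vertices since $\psi|_B=0$, and for the level vertices we have $d_j(\psi)=0$ (its input is zero), hence $\psi|_{L_j}\in\ker(A_{H_j}-\lambda I)=E_\lambda^{(j)}$; but then $\psi|_{L_j}$ equals its own $E_\lambda^{(j)}$-component, which is zero by hypothesis, so $\psi|_{L_j}=0$. Thus $\psi$ vanishes on every vertex of label $j$, and the induction goes through, giving $m\le b+\sum_j\ell_j$. For the last inequality, apply this bound to $k$ distinct eigenvalues $\lambda_1,\dots,\lambda_k$ and sum: since the $\lambda_t$ are distinct, for each $j$ the corresponding multiplicities in $H_j$ add to at most the number of vertices of $H_j$, i.e.\ $|L_j|$, and $\sum_j|L_j|=\ell$, so $\sum_t m_t\le kb+\ell$.

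The delicate point is the label-by-label bookkeeping hidden in the second step: one must check that, at a given label $j$, once $\psi$ is known on the vertices of smaller label and the values on the label-$j$ bad vertices are viewed as free parameters, the eigenvector equations at the label-$j$ vertices determine $\psi$ on the label-$j$ prodigies outright and pin down $\psi|_{L_j}$ up to an arbitrary element of $E_\lambda^{(j)}$ — so that the only new freedom introduced at label $j$ is the bad values together with the $\ell_j$-dimensional space $E_\lambda^{(j)}$. Isolating the identity $(A_{H_j}-\lambda I)\,\psi|_{L_j}=d_j(\psi)$ makes this routine linear algebra; the only subtlety is that defining $d_j$ requires the recursive elimination of prodigy values, which is legitimate because a prodigy's value depends only on strictly smaller labels and $G$ has finitely many vertices, so the recursion bottoms out.
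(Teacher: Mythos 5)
Your proof is correct and follows essentially the same route as the paper: you package the codimension count as an injective linear map $\Phi\colon E_\lambda\to\dR^{B}\oplus\bigoplus_j E_\lambda^{(j)}$ (the paper instead intersects $S$ with the space vanishing on bad vertices and with each $A_j^\perp$ and lower-bounds the dimension), but the injectivity/triviality argument is exactly the paper's induction on labels, and the second inequality is obtained by the same summation over $j$ using $\sum_i \dim A_{i,j}\le |L_j|$.
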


\bp{}
Let $S$ be the eigenspace for the eigenvalue $\lambda$ of multiplicity $m$. Consider
the set of bad vertices, and let $B$ be the space of vectors which vanish on that set.
For every integer $j$, let $L_j$ denote the set of level vertices with label $j$ and let $A_{j}$ denote the
eigenspace of $\lambda$ in the induced subgraph of $L_j$. With the notation of the theorem, $\DIM( A_j) = \ell_j$. We extend the vectors in $A_{j}$ to the whole graph by setting
them to zero outside $L_j$. Let $A_{j}^\perp$ be the orthocomplement of $A_{j}$.
Recall that for any vector spaces $A,B$ we have $\DIM ( A \cap B ) \ge \DIM A -\codim B$. Using this, let $S'=S\cap B \cap \bigcap_{j} A^{\perp}_{j}$,
and note that
\begin{equation}\label{eq:codimH}
\DIM S'\ge \DIM S - \codim B  - \sum_{j}  \codim  A_{j} ^\perp   = m - b - \sum_j \DIM A_{j}.
\end{equation}
However, we claim that the subspace $S'$ is trivial. Let $f\in S'$. We now prove, by induction on the label $j$ of the vertices, low to high, that $f$ vanishes on vertices with label $j$. Suppose that $f$ vanishes on all vertices with label strictly below  $j$. Clearly, $f$ vanishes on all bad vertices
since $f\in B$. Consider a prodigy $v$ with label $j$. Then, by induction hypothesis, $v$ has a neighbor $w$ so that $f$ vanishes on all of the neighbors of $w$
except perhaps at $v$. But the eigenvalue equation
$$
\lambda f(w)=\sum_{u\sim w} f(u)
$$
implies that $f$ also vanishes at $v$. Now, observe that the outer vertex boundary of $L_j$ (all vertices that
have a neighbor in $L_j$ but are not themselves in $L_j$) is contained in the union of the set of bad vertices, the set of level vertices with label strictly below $j$ and the set of prodigy with label $j$. Hence, we know that $f$ vanishes on the outer vertex boundary of $L_j$. This means that the restriction of $f$ to $L_j$ has to satisfy the eigenvector equation.
But since $f\in A^{\perp}_{j}$, we get that $f(v)=0$ for $v\in L_j$, and the induction is complete.

We thus have proved that $S'$ is trivial. Thus Equation \eqref{eq:codimH} implies that $m \le b + \sum_j \DIM A_{j}$. It gives the first statement of Theorem \ref{th:tool2f}.

For the second statement, let $A_{i,j}$ denote the
eigenspace of $\lambda_i$ in the induced subgraph of $L_j$. Summing over $i$ the above inequality, we get
\[
m_1+\ldots +m_k \le bk + \sum _{j}\sum_{i} \DIM A_{i,j}\le bk + \sum _{j}|L_j| = bk+ \ell.
 \]
\ep
\paragraph{Unimodular graphs.}

We now prove the same theorem for unimodular random graphs  which may possibly be infinite. To make the above proof strategy work, we need a suitable notion of normalized dimension for  infinite dimensional subspaces of $\ell^2(V)$.
This requires  some basic concepts of operator algebras. First, as usual, if $(G,o)$ is a unimodular random graph, we shall say that a labeling  $\eta : V(G) \to \dZ$ is  invariant if on an enlarged probability space, the vertex-weighted rooted graph $(G,\eta,o)$ is unimodular.

There is a natural Von Neumann algebra associated to unimodular measures. More precisely,   let $\cG^*$ denote the set of equivalence classes of locally finite connected (possibly weighted) graphs endowed with the local weak topology. There is a canonical way to represent an element $ (G,o) \in \cG^ *$ as a rooted graph on the vertex set $V(G) = \{ o , 1 , 2 , \cdots, N \}$ with root $o$ and $N \in \dN \cup \{ \infty\}$, see Aldous and Lyons \cite{aldouslyons}. We set $V = \{o, 1, 2,  \cdots\}$, $H = \ell^2 (V)$  and define $\cB(H)$ as the set of bounded linear operators on $H$. For any bijection $\sigma : V \to V$, we consider the orthogonal operator $\lambda_{\sigma}$ defined for all $v \in V$, $\lambda_{\sigma} (e_v) = e_{\sigma(u)}$. For a fixed $\rho \in \cP_{\UNI} (\cG^*)$, we introduce the algebra $\cM$ of operators in $ L^{\infty} ( \cG^*, \cB (H),\rho)$ which commutes with the operators $\lambda_{\sigma} $, i.e. for any bijection $\sigma$, $\rho$-a.s. $B(G,o) = \lambda^{-1}_{\sigma} B ( \sigma(G),o ) \lambda_{\sigma}$. In particular, $B(G,o)$ does not depend on the root. It is a von Neumann algebra and the linear map $\cM \to \dC$ defined by
$$\tau (B) = \dE_\rho \langle e_o , B e_o \rangle,$$
where $B = B(G,o) \in \cM$ and under, $\dE_\rho$, $G$ has distribution $\rho$, is a normalized trace (see \cite[\S 5]{aldouslyons} and Lyons \cite{MR2593624}). By construction, an element of $B \in \cM$ is a random bounded operator associated to the random rooted graph $G$.

A closed vector space $S$ of $H$ such that, $P_S$, the orthogonal projection to $S$, is an element of $\cM$ will be called an invariant subspace. Recall that the von Neumann dimension of such vector space $S$ is just
\[\DIM(S):= \tau ( P_S ) = \dE_{\rho} \langle e_o, P_S e_o \rangle.\]
We refer e.g.\ to Kadison and Ringrose \cite{MR1468230}.

\begin{theorem}\label{th:tool2}
Let $(G,o)$ be unimodular random graph with distribution $\rho$, and consider an invariant labeling $\eta$ of its vertices with integers.
Let $\ell,b$ denote the probability that the root is level or bad, respectively. For integer $j$ and real $\lambda$, let $\ell_{j}$ be the von Neumann dimension of the eigenspace of $\lambda$ in the subgraph spanned by level vertices with label $j$. The spectral measure $\mu_\rho$ satisfies
$$
\mu_{\rho} ( \lambda) \leq b + \sum_j \ell_j.
$$
Consequently, for any distinct real numbers  $\lambda_1,\ldots,\lambda_k$,  we have
$$
\mu_{\rho}(\lambda_1)+\ldots +\mu_{\rho}(\lambda_k)  \le kb +\ell.
$$
In particular, if $b=0$, then the atomic part of $\mu_{\rho}$ has total weight at most $\ell$.
\end{theorem}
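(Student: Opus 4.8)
The plan is to transcribe the proof of Theorem~\ref{th:tool2f} into the tracial von Neumann algebra $(\cM,\tau)$ attached to $\rho$, replacing the multiplicity of an eigenspace everywhere by its von Neumann dimension $\DIM(W)=\tau(P_W)=\dE_\rho\langle e_o,P_W e_o\rangle$. For clarity I would first treat the case of uniformly bounded degrees, so that the adjacency operator $A=A(G,o)$ is a bounded self-adjoint element of $\cM$; the general case then follows from it by the degree-truncation used to prove Proposition~\ref{prop:defspecmeas}, together with the continuity statement (ii) there.

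Fix a real $\lambda$. The three players of the finite proof become invariant subspaces of $H=\ell^2(V)$, i.e.\ closed subspaces whose orthogonal projection lies in $\cM$. First, $S:=\ker(A-\lambda I)$, with $P_S\in\cM$ by Borel functional calculus and $\DIM(S)=\mu_\rho(\{\lambda\})$, since $\mu_\rho$ is the $\rho$-average of the spectral measures at the root and $\mu_A^{e_o}(\{\lambda\})=\langle e_o,P_S e_o\rangle$. Second, $B:=\{\psi:\psi\equiv0\text{ on bad vertices}\}$, whose projection is multiplication by $\IND(v\text{ not bad})$; this lies in $\cM$ because ``bad'' is a local property of a vertex of the $\eta$-weighted graph that is invariant under rooted isomorphism, so $\codim B=\dP(o\text{ bad})=b$. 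Third, for each $j\in\dZ$, the zero-extension $E_j$ of the $\lambda$-eigenspace of the adjacency operator of the subgraph induced on $L_j:=\{\text{level vertices of label }j\}$: writing $D_{L_j}\in\cM$ for multiplication by $\IND(v\in L_j)$, that induced operator is $D_{L_j}AD_{L_j}\in\cM$, the subspace $\IM(D_{L_j})$ is invariant under it, and one checks $P_{E_j}\in\cM$ with $\DIM(E_j)=\ell_j$. Since the $E_j$ have pairwise orthogonal supports, $\sum_j P_{E_j}$ is dominated by multiplication by $\IND(o\text{ level})$, hence $\sum_j\ell_j\le\ell<\infty$.

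Now I would invoke the standard dimension inequality in a finite tracial von Neumann algebra: for projections $p,q\in\cM$, Kaplansky's parallelogram law gives $\tau(p\wedge q)\ge\tau(p)+\tau(q)-1$, i.e.\ $\codim(W_1\cap W_2)\le\codim W_1+\codim W_2$ for invariant subspaces, and normality of $\tau$ upgrades this to $\codim\bigl(\bigcap_n C_n\bigr)\le\sum_n\codim C_n$. Applied to $S':=S\cap B\cap\bigcap_j E_j^{\perp}$ this yields
\[
\DIM(S')\ge\DIM(S)-\codim(B)-\sum_j\codim(E_j^{\perp})=\mu_\rho(\{\lambda\})-b-\sum_j\ell_j.
\]
The crux is that $S'$ is trivial, i.e.\ $\tau(P_{S'})=0$; by faithfulness of $\tau$ this is equivalent to $S'(G,o)=\{0\}$ for $\rho$-a.e.\ $(G,o)$. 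For such a graph take $f\in S'(G,o)\subseteq\ell^2(V)$ and run the induction from the proof of Theorem~\ref{th:tool2f} on the label $j$, low to high: $f$ vanishes on bad vertices since $f\in B$; if $f$ vanishes on all vertices of label $<j$, the eigenvector equation at the witness neighbour forces $f=0$ at every prodigy of label $j$; the outer vertex boundary of $L_j$ lies in the union of the bad vertices, the vertices of label $<j$, and the prodigies of label $j$, all of which $f$ annihilates, so $f|_{L_j}$ satisfies the eigenvector equation in the induced subgraph, i.e.\ $f|_{L_j}\in E_j$; but $f\perp E_j$ while $f|_{L_j}$ is the coordinate projection $D_{L_j}f$, so $\|f|_{L_j}\|^2=\langle f,f|_{L_j}\rangle=0$, which closes the step. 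Hence $f$ vanishes on $\{\eta\le j\}$ for every $j$, and since $f\,\IND_{\eta\le j}\to f$ in $\ell^2$ we get $f\equiv0$. Plugging $\DIM(S')=0$ into the display gives $\mu_\rho(\{\lambda\})\le b+\sum_j\ell_j$; summing over distinct $\lambda_1,\dots,\lambda_k$ and using that for fixed $j$ the eigenspaces $E_j^{(i)}$ are mutually orthogonal inside $\ell^2(L_j)$, so $\sum_i\sum_j\DIM(E_j^{(i)})\le\sum_j\tau(D_{L_j})=\ell$, yields $\sum_i\mu_\rho(\{\lambda_i\})\le kb+\ell$, and the case $b=0$ is then immediate.

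I expect the main obstacle to be checking that the finite-dimensional linear algebra is legitimate in the infinite setting: that $S$, $B$ and every $E_j$ are genuinely invariant subspaces (projections in $\cM$), that the countable-intersection form of the dimension inequality holds, and, in the triviality step, that the low-to-high induction is well founded. The last point is automatic when $\eta$ is bounded below (in particular $\dZ_{\ge0}$-valued, as in the intended applications); for $\eta$ unbounded below one runs the argument on the subgraphs induced on $\{\eta\ge-N\}$, whose root-distributions are still unimodular and converge to $\rho$, and passes to the limit via Proposition~\ref{prop:defspecmeas}(ii). The reduction from bounded to unbounded degree likewise uses the truncation of Proposition~\ref{prop:defspecmeas}, the only point being to control $b$ and $\sum_j\ell_j$ under that truncation in the limit.
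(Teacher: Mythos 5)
Your proposal follows the same overall strategy as the paper: transcribe the finite-graph proof of Theorem~\ref{th:tool2f} into the tracial von Neumann algebra $(\cM,\tau)$, replacing multiplicity by von Neumann dimension, forming $S'=S\cap B\cap\bigcap_j A_j^\perp$, applying the dimension inequality for intersections, and showing $S'$ is $\rho$-a.s.\ trivial by the same low-to-high induction. The core of the argument is identical, and your explicit remark that faithfulness of $\tau$ turns $\tau(P_{S'})=0$ into $S'(G,o)=\{0\}$ a.s.\ is a nice point that the paper leaves implicit. Your upgrade of the two-subspace dimension inequality to countable intersections via normality of $\tau$ is also correct, though the paper sidesteps the need for it by first reducing to finitely many labels.

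The step where you genuinely diverge from the paper, and where there is a gap, is the handling of unboundedly many labels. The paper keeps the measure $\rho$ and the graph fixed and merely \emph{relabels}: all vertices with $|\eta(v)|>n$ are assigned the new label $-n-1$. This is again an invariant labeling, creates a minimum label (so the induction is well-founded), has finitely many labels, only worsens the designation of vertices with $|\eta|>n$ (so the new bad-probability is at most $b+\varepsilon(n)$), and leaves the $L_j$ for $|j|\le n$ under control. The resulting bound is on $\mu_\rho(\lambda)$ itself, and one simply lets $\varepsilon(n)\to 0$. Your route instead replaces $\rho$ by the law $\rho_N$ of the subgraph induced on $\{\eta\ge -N\}$ and ``passes to the limit via Proposition~\ref{prop:defspecmeas}(ii).'' That proposition delivers only weak convergence $\mu_{\rho_N}\to\mu_\rho$, which does \emph{not} control the masses of atoms, so the inequality $\mu_{\rho_N}(\lambda)\le b_N+\sum_j\ell_{j,N}$ cannot be transported to $\mu_\rho(\lambda)$ by (ii) alone. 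One would need the Kolmogorov--Smirnov control of Lemma~\ref{le:rankineq} (as in the proof of Proposition~\ref{prop:defspecmeas}) applied to the perturbation $A\mapsto D_NAD_N$, \emph{and} one would still have to argue that $b_N$ and $\sum_j\ell_{j,N}$ converge to (or are eventually dominated by) $b$ and $\sum_j\ell_j$ — deleting low-label vertices changes which vertices are prodigy, level, or bad in both directions, so this is not automatic. The paper's relabeling trick is precisely designed to avoid both of these issues at once; it is worth adopting it rather than the restriction-and-limit scheme.
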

\bp{}
We first assume that there are only finitely many labels. Let $S$ be the eigenspace of $\lambda$ : that is the subspace of $f\in  \ell^2 (V)$ satisfying, for all $w \in V$,
\begin{equation}\label{eeq2}
\lambda f(w)=\sum_{u\sim w} f(u).
\end{equation}
Consider the set of bad vertices, and let $B$ be the space of vectors which vanish on that set.
For every integer $j$ let $L_j$ denote the set of level vertices with label $j$. Let $A_{j}$ denote the
eigenspace of $\lambda$ in the induced subgraph of $L_j$; extend the vectors in $A_{j}$ to the whole graph by setting
them to zero outside $L_j$. Let $A_{j}^\perp$ be the orthocomplement of $A_{j}$.

For any two invariant vector spaces $R$,$Q$ we have
$$\DIM(R \cap Q) \ge \DIM(R) +\DIM (Q) -1,$$
(see e.g. \cite[exercice 8.7.31]{MR1170351}). Setting $S'=S\cap B \cap \bigcap_{j} A^{\perp}_{j}$,
it yields to
$$
\DIM (S')\ge \DIM (S) +\DIM (B)- 1  + \sum_j (\DIM (A^{\perp}_{j}) -1)= \mu_\rho(\lambda_i) - b - \sum_j \DIM ( A_{j}).
$$
However, we claim that the subspace $V_i'$ is trivial. Let $f\in V_i'$. We now prove, by induction on the label $j$ of the vertices, low to high, that $f$ vanishes on vertices with label $j$. The argument is exactly similar to the case of  finite graphs presented before.  Suppose that $f$ vanishes on all vertices with label strictly below  $j$. Clearly, $f$ vanishes on all bad vertices
since $f\in B$. Consider a prodigy $v$ with label $j$. Then $v$ has a neighbor $w$ so that $f$ vanishes on all of the neighbors of $w$
except perhaps at $v$. But the eigenvalue equation \eqref{eeq2}
implies that $f$ also vanishes at $v$. By now, we know that $f$ vanishes on the outer vertex boundary of $L_j$. This means that the restriction of $f$ to $L_j$ has to satisfy the eigenvector equation.
But since $f\in A^{\perp}_{j}$, we get that $f(v)=0$ for $v\in L_j$, and the induction is complete.

We have proved that $\mu_\rho(\lambda_i)\le b + \sum_j \DIM (A_{j})$ : it is the first statement of the theorem  in the case of finitely many labels.  When there are infinitely many labels, for every $\veps$, we can find $n$ so that $\pr(|\eta(o)|>n)\le \veps$. We can relabel all vertices with $|\eta(v)|>n$ by $-n-1$; this may make them bad vertices, but will not make designation of vertices with other labels worse. The argument for finitely many labels gives
\[
\mu_\rho(\lambda) \le b+\veps+ \sum_{j = -n - 1} ^n \dim ( A_j) \leq b + 2\veps + \sum_{j = -n} ^n \dim(A_j) \leq b + 2 \veps + \sum_j \ell_j,
\]
and letting $\veps\to 0$ completes the proof of the first statement.

For the second statement, let $A_{i,j}$ denote the eigenspace of $\lambda_i$ in the induced subgraph of $L_j$. Summing over $i$ the above inequality, we get
\[
\mu_\rho(\lambda_1)+\ldots +\mu_\rho(\lambda_k) \le bk + \sum _{j}\sum_{i} \DIM (A_{i,j}) \le bk + \sum _{j}\pr (o\in L_j) = bk+ \ell.
\]


\ep

\paragraph{Vertical percolation.} There are simple examples where we can apply Theorems \ref{th:tool2f}-\ref{th:tool2}. Consider the graph of $\dZ^2$. We perform a vertical percolation by removing some vertical edge $\{ (x,y), (x,y+1) \}$. We restrict to the $n \times n$ box $[0,n-1]^2 \cap \dZ^2$. We obtain this way a finite graph $\Lambda_n$ on $n^2$ vertices.  We consider the labeling $\eta ( (x,y) ) = x$. It appears that all vertices with label different from $0$ are prodigy.  The vertices on the $y$-axis are bad and there are no level vertices. By Theorem \ref{th:tool2f}, the multiplicity of any eigenvalue of the adjacency matrix of $\Lambda_n$ is bounded by $n= o  (n^2)$.

Similarly, let $p \in [0,1]$. We remove each vertical edge $\{ (x,y), (x,y+1) \}$ independently with probability  $1-p$. We obtain a random graph $\Lambda(p)$ with vertex set $\dZ^2$. Now, we root this graph $\Lambda(p)$ at the origin and obtain a unimodular random graph. We claim that its expected spectral measure $\mu_\rho$ is continuous for any $p \in [0,1]$. Indeed, let $k \geq 1$ be an integer and $U$ be a random variable sampled uniformly on $\{0,\cdots, k-1\}$. We consider the labeling $\eta ( (x,y) ) = x + U \;  \mathrm{mod} (n)$. It is not hard to check that this labeling is invariant. Moreover, all vertices such that $\eta ( x , y ) \ne 0$ are prodigy while vertices such that $\eta (x,y) = 0$ are bad. It follows from Theorem \ref{th:tool2} that the mass of any atom of $\mu_\rho$ is bounded by $1/k$. Since $k$ is arbitrary, we deduce that $\mu_\rho$ is continuous.

The same holds on $\dZ^d$, $d \geq 2$, in the percolation model where we remove edges of the form $\{ u ,  u + e_k \}$, with $u \in \dZ^d$, $k \in \{2,\cdots, d\}$.

\section{The minimal path matching technique}

In this section, we give a new tool to upper bound the multiplicities of eigenvalues.
\begin{definition}
Let $G = (V,E)$ be a finite graph, $I = \{i_1, \cdots, i_b\}$ and $J =\{j_1, \cdots, j_b \}$ be two disjoint subsets of $V$ of equal cardinal. A {\bf path matching} $\Pi = \{\pi_\ell\}_{1\leq \ell \leq b}$ from $I$ to $J$ is a collection of self-avoiding paths $\pi_\ell = ( u_{\ell,1} , \cdots , u_{\ell, p_\ell} )$ in $G$ such that  for some permutation $\sigma$ on $\{1,\cdots, b\}$ and all  $1 \leq \ell \ne \ell' \leq b$,
\begin{itemize}
\item $\pi_{\ell'} \cap \pi_{\ell} = \emptyset$,
\item $u_{\ell,1} = i_{\ell}$ and $u_{\ell, p_{\ell}} =  j_{\sigma (i_\ell)}$.
\end{itemize}
We will call $\sigma$ the {\bf matching map} of $\Pi$. The length of $\Pi$ is defined as the sum of the  lengths of the paths
$$
|\Pi| = \sum_{\ell = 1} ^b | \pi_{\ell} | =  \sum_{\ell = 1} ^b | p_{\ell}|.
$$
Finally, $\Pi$ is a  {\bf minimal path matching} from $I$ to $J$ if its length is minimal among all possible paths matchings.
\end{definition}

Connections between multiplicities of eigenvalues and paths have already been known for a long time, see e.g.\ Godsil \cite{MR791025}. Kim and Shader \cite[Theorem 8]{MR2470115} provide a nice argument that connects the two notions in trees. This was the starting point for the proof of the following theorem.

\begin{theorem}\label{th:tool1}
Let $G = (V,E)$ be a finite graph and $I , J \subset V$ be two subsets of  cardinal $b$. Assume that the sets of path matchings from $I$ to $J$ is not empty and that all minimal path matchings from $I$ to $J$ have the same matching map. Then if $|V| - \ell$ is the length of a minimal path matching and if $m_1, \cdots, m_r$ are the multiplicities of the distinct eigenvalues $\lambda_1, \cdots, \lambda_r$ of the adjacency matrix of $G$, we have
$$
\sum_{i=1}^r ( m_i - b )_+ \leq  \ell.
$$
Consequently, for any $1 \leq k \leq r$,
$$
m_1 + \cdots + m_k  \leq k b + \ell.
$$
\end{theorem}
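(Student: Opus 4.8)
The plan is to recast the statement as a divisibility relation between two polynomials in $x$. Write $A$ for the adjacency matrix, $n=|V|$, and let $\det(xI-A)=\prod_i (x-\lambda_i)^{m_i}$ be the characteristic polynomial. Let $Q(x):=\det\big((xI-A)[V\setminus J,\,V\setminus I]\big)$ be the minor obtained by deleting the rows indexed by $J$ and the columns indexed by $I$. The link between the two is Jacobi's identity on minors of the inverse: for $x$ not an eigenvalue,
$$
\det\big((xI-A)^{-1}[I,J]\big)=\pm\,\frac{Q(x)}{\det(xI-A)},
$$
an identity of rational functions. Granting that (a) $Q$ has degree exactly $\ell$ (in particular $Q\not\equiv0$) and (b) the left-hand side has a pole of order at most $\min(m_i,b)$ at each $\lambda_i$, it follows that $(x-\lambda_i)^{(m_i-b)_+}$ divides the polynomial $Q$; since the $\lambda_i$ are distinct this gives $\sum_i (m_i-b)_+\le \deg Q=\ell$, the first inequality. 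The corollary is then immediate from $m_i\le b+(m_i-b)_+$ by summing over $i=1,\dots,k$.

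For (b) I would use the spectral decomposition $(xI-A)^{-1}=\sum_i (x-\lambda_i)^{-1}P_i$, with $P_i$ the orthogonal projection onto the $\lambda_i$-eigenspace, of rank $m_i$. Restricting rows to $I$ and columns to $J$ yields the $b\times b$ matrix of rational functions $\sum_i (x-\lambda_i)^{-1}P_i[I,J]$, and $\RANK(P_i[I,J])\le\min(m_i,b)=:s_i$. A short local computation at $x=\lambda_i$ --- diagonalise the residue $P_i[I,J]$ by constant invertible matrices, then expand the determinant by a Schur complement --- shows the pole of the determinant at $\lambda_i$ has order at most $s_i$. This step is routine linear algebra.

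The heart of the matter is (a). Expanding $Q(x)$ as a sum over bijections $\phi:V\setminus J\to V\setminus I$ and reading each bijection as a digraph with an arc $v\to\phi(v)$ for every $v$, the vertices of $I$ become sources, those of $J$ become sinks, and the rest have in- and out-degree one; so the digraph is $b$ vertex-disjoint paths from $I$ to $J$ plus cycles on the remaining vertices. Since the arc $v\to\phi(v)$ contributes $x$ when $v=\phi(v)$, $-1$ when $\{v,\phi(v)\}\in E$, and $0$ otherwise, the surviving monomials correspond exactly to a path matching $\Pi$ from $I$ to $J$ together with loops and short cycles covering the other vertices, each loop carrying a factor $x$. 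A monomial attached to $\Pi$ has degree $|V|-|\Pi|$ minus the number of vertices used by cycles, so $\deg Q\le |V|-|\Pi_{\min}|=\ell$, with equality attained only by monomials coming from a minimal path matching with all other vertices loops. The obstacle --- and the only genuinely delicate point --- is to rule out cancellation among these top-degree monomials: one must check that the sign of the monomial attached to $\Pi$ is $(-1)^{|\Pi|-b}\,\mathrm{sgn}(\sigma)$ times a sign depending only on $V,I,J$, where $\sigma$ is the matching map of $\Pi$ (this is where the argument of Kim and Shader \cite{MR2470115} enters). Given this, all minimal path matchings contribute the same nonzero sign --- each has length $|V|-\ell$ and, by hypothesis, matching map $\sigma$ --- so the coefficient of $x^\ell$ in $Q$ equals $\pm$ the number of minimal path matchings, which is nonzero; hence $\deg Q=\ell$. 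Everything outside this sign bookkeeping is just a repackaging of Jacobi's identity and the pole-order estimate.
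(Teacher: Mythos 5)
Your proof is correct, and it takes a genuinely different route from the paper's in the ``algebraic'' half of the argument, while the combinatorial half coincides.

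The shared core is your step (a): expanding the $b$-minor $Q(x)=\det\bigl((xI-A)[V\setminus J,\,V\setminus I]\bigr)$ over permutations, reading each surviving permutation as a vertex-disjoint path matching from $I$ to $J$ plus loops and cycles, noting that the top degree $\ell=|V|-|\Pi_{\min}|$ is reached exactly by minimal path matchings with all remaining vertices fixed, and ruling out cancellation via the sign $\varepsilon(\Pi)=(-1)^{|\Pi|-b}\,\mathrm{sgn}(\sigma)$. This is precisely the paper's computation (the paper states the same sign as $(-1)^{n-\ell+b}$ under the normalization $\sigma=\mathrm{id}$, and frames the grouping as $P_{I,J}(A)=\sum_\Pi \varepsilon(\Pi)\det\bigl((A-x)_{\Pi,\Pi}\bigr)$), and your identification of it as the delicate point is accurate.

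Where you diverge is in how you pass from ``\,$\deg Q=\ell$\,'' to ``\,$\sum_i(m_i-b)_+\le\ell$\,''. The paper introduces $\Delta_b(A)=\mathrm{GCD}\{P_{I,J}(A):|I|=|J|=b\}$, proves $\Delta_b(UB(x)W)=\Delta_b(B(x))$ for invertible $U,W$, diagonalizes $A$ to obtain the exact identity $\Delta_b(A)=\prod_i(x-\lambda_i)^{(m_i-b)_+}$, and then uses $\Delta_b\mid P_{I,J}$. You instead apply Jacobi's complementary-minor identity, $\det\bigl((xI-A)^{-1}[I,J]\bigr)=\pm Q(x)/\det(xI-A)$, together with the spectral decomposition $(xI-A)^{-1}=\sum_i(x-\lambda_i)^{-1}P_i$ and the rank bound $\mathrm{rank}\,P_i[I,J]\le\min(m_i,b)$, to bound the pole order at each $\lambda_i$ by $\min(m_i,b)$ and conclude that $(x-\lambda_i)^{(m_i-b)_+}$ divides $Q$. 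Both routes rest on the spectral theorem, but yours is more local (you never need to consider all $b$-minors simultaneously, only the one relevant to the chosen $I,J$), while the paper's gives the sharper structural statement that the GCD of all $b$-minors is exactly $\prod_i(x-\lambda_i)^{(m_i-b)_+}$. Your pole-order estimate in (b) is correct as sketched: diagonalizing the residue $P_i[I,J]$ by constant invertible matrices reduces it to a permanent count of reciprocal factors, each supported on at most $\mathrm{rank}\,P_i[I,J]\le\min(m_i,b)$ diagonal slots.
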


We will aim at applying Theorem~\ref{th:tool1} with $b$ small and $|V| - \ell$ proportional to $|V|$. Observe that $\ell$ is the number of vertices not covered  by the union of paths involved in a minimal path matching.  Theorem~\ref{th:tool1} and Theorem~\ref{th:tool2} have the same flavor but they are not equivalent one from each other. We note  that, contrary to Theorem \ref{th:tool2f}-Theorem \ref{th:tool2},  we do not have a version of Theorem \ref{th:tool1}  which holds for possibly infinite unimodular graphs. Unlike Theorem \ref{th:tool2f}, we do not have either a version which bounds the multiplicity of an eigenvalue in terms of its multiplicities in subgraphs. On the other hand, Theorem~\ref{th:tool1} will be used to show  the existence of non-trivial continuous part for the expected spectral measure of  two dimensional supercritical bond percolation.  It is not clear how to apply Theorem~\ref{th:tool2f} or Theorem~\ref{th:tool2} to get this result.

Following \cite{MR2470115}, the proof of Theorem \ref{th:tool1} is based on the divisibility properties of characteristic polynomials of subgraphs. For $I,J \subset V$, we define $(A - x)_{I,J}$ has the matrix $(A-x)$ where the rows with indices in $I$ and  columns with indices in  $J$ have been removed. We define the polynomial associated to the $(I,J)$-minor as :
$$
P_{I,J} (A) : x \mapsto  \det ( A - x )_{I,J}.
$$
We introduce the polynomial
$$
\Delta_{b} ( A) = \mathrm{GCD} \left( P_{I,J}(A)  : |I| = |J| = b \right),
$$
where $\mathrm{GCD}$ is the (unique) monic polynomial $g$ of highest degree so that all arguments are some polynomial multiple of $g$. Recall also that any polynomial divides $0$.  Observe that if $|I| = b$ then $P_{I,I}(A)$ is a polynomial of degree $|V|-b$. It follows that the degree of $\Delta_b$ is at most $|V| - b$.

The next lemma is the key to relate multiplicities of eigenvalues and characteristic polynomial of subgraphs.

\begin{lemma}\label{le:detminor}
If $A$ is the adjacency matrix of a finite graph and $m_1, \cdots, m_r$ are the multiplicities of its distinct eigenvalues $\lambda_1, \cdots, \lambda_r$, we have
$$
\Delta_b (A) = \prod_{i=1}^r ( x - \lambda_i )^{(m_i - b)_+}.
$$
 Consequently,
$$
\sum_{i=1}^r (m_i - b)_+ = \deg (\Delta_b (A)).
$$
\end{lemma}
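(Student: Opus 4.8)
The plan is to prove, for every $\alpha\in\dC$, that the order of vanishing of $\Delta_b(A)$ at $\alpha$ equals $(m_i-b)_+$ when $\alpha=\lambda_i$ and $0$ otherwise; since $\Delta_b(A)$ is monic this yields the product formula, and the ``consequently'' then follows by reading off degrees. Write $n=|V|$. Since $P_{I,I}(A)$ has degree $n-b$, not every $P_{I,J}(A)$ is the zero polynomial, so (with the convention that the zero polynomial has infinite order) the order of $\Delta_b(A)$ at $\alpha$ is the minimum, over all $|I|=|J|=b$, of the order of $P_{I,J}(A)$ at $\alpha$. One could short‑cut the whole lemma by observing that $\Delta_b(A)$ is the $(n-b)$‑th determinantal divisor of $xI-A$ and invoking the Smith normal form of $xI-A$ for a diagonalizable matrix (whose invariant factors are $\prod_{i:\,m_i\ge k}(x-\lambda_i)$), but I will give a self‑contained argument resting on two elementary facts.

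\emph{Fact 1.} A matrix of rank $r$ has a nonsingular $k\times k$ submatrix for every $k\le r$: take a nonsingular $r\times r$ submatrix and expand its inverse in cofactors to produce a nonsingular $(r-1)\times(r-1)$ submatrix, then iterate. \emph{Fact 2.} For a real symmetric matrix $M$ indexed by a set $W$, a vertex $v\in W$, and a real $\lambda$: if some vector in $\ker(M-\lambda)$ is nonzero at $v$, then $\dim\ker(M_{W\setminus v}-\lambda)=\dim\ker(M-\lambda)-1$, where $M_{W\setminus v}$ is the principal submatrix on $W\setminus v$. Indeed, take $f\in\ker(M-\lambda)$ with $f(v)=1$; then $(M_{W\setminus v}-\lambda)(f|_{W\setminus v})=-\mathbf 1_{N(v)}$, the negative indicator of the neighbours of $v$, so pairing against any $g\in\ker(M_{W\setminus v}-\lambda)$ and using symmetry forces $\sum_{u\sim v}g(u)=0$; hence $g$ extends by $0$ at $v$ to an element of $\ker(M-\lambda)$ vanishing at $v$, which embeds $\ker(M_{W\setminus v}-\lambda)$ into the hyperplane $\{f\in\ker(M-\lambda):f(v)=0\}$; conversely, restriction to $W\setminus v$ embeds that hyperplane into $\ker(M_{W\setminus v}-\lambda)$, so the two dimensions differ by exactly one.

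For the \textbf{lower bound} $\prod_i(x-\lambda_i)^{(m_i-b)_+}\mid\Delta_b(A)$, fix an eigenvalue $\lambda=\lambda_i$ with $m:=m_i>b$ (the case $m\le b$ being vacuous) and any $I,J$. The matrix $M(x):=(A-x)_{I,J}$ is affine in $x$, so expanding $\det M(x)$ multilinearly in the columns, the coefficient of $(x-\lambda)^j$ is a signed sum of $(n-b)\times(n-b)$ determinants, each of whose $n-b-j$ columns coming from $(A-\lambda I)_{I,J}$ lie in a space of dimension at most $\RANK(A-\lambda I)=n-m$; for $j<m-b$ these are linearly dependent and the determinant vanishes, so $P_{I,J}(A)$ vanishes at $\lambda$ to order $\ge m-b$. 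For the \textbf{upper bound} I argue pointwise. If $\alpha$ is not an eigenvalue, $A-\alpha I$ is invertible and Fact 1 gives $I,J$ with $P_{I,J}(A)(\alpha)\ne0$. If $\alpha=\lambda_i$ with $m_i\le b$, then $\RANK(A-\lambda_iI)=n-m_i\ge n-b$ and Fact 1 again gives $I,J$ with $P_{I,J}(A)(\lambda_i)\ne0$, so the order is $0=(m_i-b)_+$. If $\alpha=\lambda_i$ with $m:=m_i>b$, I delete vertices one at a time, each time (by Fact 2) deleting a vertex at which some $\lambda_i$‑eigenvector of the current principal submatrix is nonzero — possible because the multiplicity, which drops by exactly one at each step, stays $\ge m-b\ge1$ throughout the $b$ deletions — so the resulting principal submatrix $A_W$ has $\lambda_i$ of multiplicity exactly $m-b$; taking $I=J=V\setminus W$ gives $P_{I,J}(A)(x)=\det(A_W-xI)$, of order exactly $m-b$ at $\lambda_i$. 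Combined with the lower bound, every order is pinned down and the proof is complete.

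The genuinely non-formal step, and the main obstacle, is the last one: the divisibility argument only shows the order at $\lambda_i$ is $\ge(m_i-b)_+$, and one must exhibit a single minor realizing equality. The vertex-deletion construction does this cleanly, but it depends on Fact 2, i.e. on the \emph{exact} drop of the eigenspace dimension under removing a vertex ``seen'' by the eigenspace; the identity $(M_{W\setminus v}-\lambda)(f|_{W\setminus v})=-\mathbf 1_{N(v)}$ together with the symmetry pairing is the point that makes this work, and everything else is bookkeeping with determinants and ranks.
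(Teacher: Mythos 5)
Your proof is correct, and it takes a genuinely different route from the paper's. The paper's argument is algebraic: using multilinearity of the determinant it shows that $\Delta_b(B(x))$ is unchanged under $B(x)\mapsto UB(x)W$ for constant invertible $U,W$, then applies the spectral theorem to write $A-x = U(D-x)U^*$ and computes $\Delta_b(D-x)$ directly for the diagonal matrix, where the minors $P_{I,J}(D-x)$ are trivial to evaluate. Your argument instead pins down, for each point $\alpha$, the exact order of vanishing of $\Delta_b(A)$: a rank/multilinearity bound gives $\operatorname{ord}_{\lambda_i}P_{I,J}(A)\ge (m_i-b)_+$ for every $I,J$, and you then exhibit a specific principal minor attaining equality by deleting $b$ vertices one at a time, each time choosing a vertex on which the current eigenspace is nonzero and using a symmetric interlacing fact (your Fact 2) to see the multiplicity drop by exactly one. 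The paper's route is shorter, essentially reducing the lemma to an invariance property of determinantal divisors plus a one‑line diagonal computation; yours is more hands-on and has the extra payoff of identifying which minors realize the GCD (principal ones obtained by removing a set of vertices supporting the eigenspace), and it also records the cleaner conceptual fact that $\Delta_b$ is the $(n-b)$-th determinantal divisor of $xI-A$, whose form follows from the Smith normal form of a diagonalizable matrix. Your Fact 2, as stated for general real symmetric $M$, should replace $\mathbf 1_{N(v)}$ by the off-diagonal part of the $v$-th column of $M$ restricted to $W\setminus v$; the pairing and extension-by-zero arguments go through verbatim, and nothing else changes.
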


\bp{}
We set $|V| = n$. If $B(x) \in \cM_n (\dR[x])$ is an $n \times n$   matrix with polynomial entries, we may define analogously $P_{I,J} ( B(x)) = \det  B(x)_{I,J} $ and $\Delta_b (B(x))$ (we retrieve our previous definition with $B(x) = A - x$).  Let $B_1(x), \cdots, B_n(x)$ be the columns of $B(x)$. The multi-linearity of the determinant implies that
\begin{align*}
& \det ( w_{11}  B_1 (x) + w_{21} B_2(x) + \cdots + w_{n1} B_n (x) , B_2(x) , \cdots, B_n(x) )_{I,J}  \\
&\quad  = \; \sum_{j=1} ^n w_{j1} \det ( B_j(x) , \cdots, B_n(x) )_{I,J^{(j)}}
\end{align*}
is a weighted sum of determinants of the minors  of the form  $(I, J^{(j)})$, where $J^{(j)} = (J\setminus \{1\}) \cup \{j\} $ if $1 \in J$ and $J^{(j)} = J$ if   $1 \notin J.$
 It is thus divided by $\Delta_b(B(x))$. The same holds for the rows of $B(x)$. We deduce that if $U,W \in \cM_n (\dR)$, $\Delta_b (B(x))$ divides $
\Delta_b ( U B(x) W ) $.  It follows that if $U$ and $W$ are invertible
$$
\Delta_b ( U B(x) W ) = \Delta_b (  B(x) ).
$$

We may now come back to our matrix $A$. Since $A$ is symmetric, the spectral theorem gives $A=  U D U^* $ with $U$ orthogonal matrix and $D$
diagonal matrix  with $m_i$ entries equal to $\lambda_i$.  We have $U ( D - x) U^* = A -x$. Hence, from what precedes
$$
\Delta_b (A - x) = \Delta_b (  D - x ).
$$
It is immediate to check that if $I \ne J$, $P_{I,J} ( D -x) = 0$ and
$$
P_{I,I} ( D - x) = \prod_{k \notin I} ( D_{kk} - x)  = \prod_{i=1} ^r ( \lambda_i - x)^{m_i - m_i ( I)},
$$
where $m_i (I) = \sum_{k \in I} \IND (D_{kk} =  \lambda_i ) $.  The lemma follows easily.
\ep

\vline

\bp{ of Theorem \ref{th:tool1}}
We set $|V| = n$. We can assume without loss of generality that  the matching map of minimal length matchings is the identity. We consider the matrix $B \in \cM_n ( \dR)$ obtained from $A$ by setting
$$
\hbox{for $1 \leq \ell \leq b$, } \, B e_{j_\ell}= e_{i_{\ell}} \; \hbox{ and for $j \notin J$, } \, B e_j = \sum_{i \notin I} A_{ij} e_i.
$$
In graphical terms, $B$ is the adjacency matrix of the oriented graph $\bar G$ obtained from $G$ as follows : (1)  all edges adjacent to a vertex in $J$ are oriented inwards,
 (2)  all edges adjacent to a vertex in $I$ are oriented outwards, (3) all other edges of $G$  have both orientations,  and (4) for each $1 \leq \ell \leq b$, an oriented edge from $j_{\ell}$ to $i_{\ell}$  is added. We define
$$
B(x) = B - x D,
$$
where $D$ is the diagonal matrix with entry $D_{ii} = 1 - \IND ( i \in I \cup J)$. Expanding the determinant along the columns $J$, it is immediate to check that
$$
\det B(x) = \det ( A -x)_{I,J}.
$$
We find
$$
P_{I,J} (A) = \sum_{\tau} (-1)^\tau \prod_{v \in V} B(x)_{v ,\tau(v)} = \sum_{\tau} (-1)^\tau Q_\tau(x),
$$
where the sum is over all permutations of $V$. Consider a permutation such that $Q_{\tau} \ne 0$. We decompose $\tau$ into disjoint cycles. Observe that $Q_\tau \ne 0$ implies that any cycle of length at least $2$ coincides with a cycle in the oriented graph $\bar G$.  Hence, $Q_\tau = 0$ unless $\tau(j_{\ell}) = i_{\ell}$ and $( \tau^k(i_{\ell}), k \geq 0 )$ is a path in $\bar G$.  We define $\sigma ( i_\ell) = \tau^{p_{\ell}} (i_{\ell})$ as the first element in $J$ which is met in the path. We may decompose these paths into disjoints path $\pi_\ell =( \tau^k (i_{\ell}), 0 \leq k \leq p_\ell )$ in $G$ from $i_{\ell}$ to $j_{\sigma ( \ell)}$.  It defines a path matching $\Pi =\{\pi_1, \cdots, \pi_b\}$. The contribution to $Q_{\tau}$ of any cycle of length at least $2$ is $1$ (since off-diagonal entries of $A$ and $B$ are $0$ or $1$). Also, the signature of disjoint cycles is the product of their signatures. So finally, it follows that
\begin{equation}\label{eq:detPIJ}
P_{I,J} (A) = \sum_{\Pi} \varepsilon(\Pi) \det ( B (x)_{\Pi, \Pi} ) = \sum_{\Pi} \varepsilon(\Pi) \det  ( ( A - x)_{\Pi, \Pi} ),
\end{equation}
where the sum is over all path matchings from $I$ to $J$ and $\varepsilon(\Pi)$ is the signature of the permutation $\tau$ on $\Pi$ defined by, if $\Pi = \{\pi_1, \cdots, \pi_b\}$, $\pi_\ell =( i_{\ell , 1}, \cdots, i_{\ell , p_\ell}) $ and $\sigma$ is the matching map of $\Pi$ :  for $1 \leq k \leq p_{\ell} -1$, $\tau ( i_{\ell,k} ) = i_{\ell, k+1} $ and $\tau ( i_{\ell, p_{\ell}} ) = \tau ( j_{\sigma(\ell)} ) = i_{\sigma(\ell)} $.

Observe that $ \det  ( ( A - x)_{\Pi, \Pi} )$ is a polynomial of degree $n - |\Pi|$ and leading coefficient $(-1)^{n - |\Pi|}$. Recall also that the signature of a cycle of length $k$ is $(-1)^{k+1}$. By assumption,  if $\Pi$ is a minimal path matching then its matching map is the identity : it follows that
$$\varepsilon(\Pi) =  (-1)^{n - \ell + b}.$$
Hence, from \eqref{eq:detPIJ},  $P_{I,J} (A)$ is a polynomial of degree $\ell$ and leading coefficient  $m (-1)^b$ where $m$ is the number of minimal path matchings.   By assumption $\Delta_b (A)$ divides $P_{I,J} (A)$ in particular $\deg (\Delta_b (A) ) \leq \ell$. It remains to apply Lemma \ref{le:detminor}. \ep

\paragraph{Vertical percolation (revisited).} Let us revisit the example of vertical percolation on $\dZ^2$ introduced in the previous paragraph.  We consider the graph $\Lambda_n$ on the vertex set $[0,n-1]^2 \cap \dZ^2$ where some vertical edges $\{ (x,y),(x,y+1)\}$ have been removed. We set $I = \{(0,0), (0,1), \cdots, (0,n-1)\}$ and $J =  \{(n-1,0), (n-1,1), \cdots, (n-1,n-1)\}$. Consider the path matchings from $I$ to $J$. Since none of the horizontal edges of the graph of $\dZ^2$ have been removed, the minimal path matching is unique, it matches $(0,k)$ to $(n-1,k)$ along the path $( (0,k), (1,k), \cdots , (n-1,k))$. In particular, the length of the minimal path matching is $n^2$. We may thus apply Theorem \ref{th:tool1} : we find that the multiplicity of any eigenvalue is bounded by $n = o(n^2)$.  By pointwise convergence of atoms, this implies that the  limiting spectral measure is continuous.   Note that Theorems \ref{th:tool2f} and \ref{th:tool1} give the same bound on the multiplicities for this example.

\paragraph{Lamplighter group. } The assumption that all minimal path matchings have the same matching map is important in the proof of Theorem \ref{th:tool1}. It is used to guarantee that the polynomial in \eqref{eq:detPIJ} is not identically zero. Consider a F\o lner sequence $B_n$ in the Cayley graph of the  lamplighter group $\dZ_2 \wr \dZ$ \cite{MR1866850} where $B_n$ consists of the vertices of the form $(v, k) \in \dZ_2 ^{\dZ} \times \dZ$ with $v(i) = 0$ for $|i| > n$ and $ |k| \le n$. There is an obvious minimal matching in $B_n$  covering all the vertices  where each path  is obtained  by shifting  the marker from $-n$ to $n$ keeping the configurations of the lamps unaltered along the way.  But
 the condition on the unicity of the matching map is not fulfilled. In this case, it is not hard to check that there is a perfect cancellation on the right hand side of  \eqref{eq:detPIJ}. It is consistent with the fact that spectral measure of this lamplighter group is purely atomic.

\section{Supercritical bond percolation on $\dZ^2$}
\label{sec:perco}

In this section, we will prove Theorem~\ref{thm:supercric_perc_Z2} by finding an explicit lower bound on the   total mass of
the continuous part of $\mu_\rho$ in terms of the speed of  the  point-to-point first passage percolation on $\dZ^2$.  We fix $p> p_c(\dZ^2)=1/2$.

We will use a finite approximation of $\dZ^2$.  Let $\Lambda_n(p)$ be the (random) subgraph of  the lattice $\dZ^2$  obtained by restricting the   $p$-percolation on  $\dZ^2$  onto the $(n+1) \times (n+1)$ box $[0, n]^2 \cap \dZ^2$. We simply write $\Lambda_n$ for $\Lambda_n(1)$. As mentioned in the introduction, $\mathrm{perc}(\dZ^2, p)$ is the  local weak limit of $U(\Lambda_n(p))$ and hence by Proposition~\ref{prop:defspecmeas},  we have that $\dE \mu^p_n$ converges weakly to $ \mu_\rho$ as $n \to \infty$,
where $\mu_n^p$  is the empirical eigenvalue  distribution  of $\Lambda_n(p)$ and the average $\dE$ is taken w.r.t.\ the randomness of $\Lambda_n(p)$.

Now, assume that, given a realization of the random graph $\Lambda_n(p)$, we can find  two disjoint subsets of vertices $U$ and $V$  of $\Lambda_n(p)$ with $|U| = |V|$ and a minimal vertex-disjoint path matching  $M_n$ of  $\Lambda_n(p)$  between $U$ and $V$ such that
\begin{enumerate}[(i)]
\item The vertices of $U$ and $V$ are uniquely paired up in any such minimal matching of  $\Lambda_n(p)$ between $U$ and $V$.
\item $|U| = o(n^2)$.
\item There exists a  constant $c>0$ such that  the size of $M_n$  is at least $cn^2$,  with probability converging to one.
\end{enumerate}
If such a matching exists satisfying property (i), (ii) and (iii) as above,  then Theorem~\ref{th:tool1} says that for any finite subset $S \subset \dR$,
\[ \dP(\mu_n^p(S) \le 1-c)  = 1 - o(1),\]
and consequently, $ \dE \mu^p_n (S) \le (1-c) + o(1)$. Then by L\"{u}ck approximation (see \cite[Corollary 2.5]{MR2148799}, \cite[Theorem 3.5]{thom08} or \cite{ATV})  $\mu_\rho(S) = \lim_{n \to \infty}   \dE \mu^p_n (S) \le 1-c$ for any finite subset $S$,  which implies that the   total mass of
the continuous part of $\mu_\rho$ is at least $c$. Hence, in order to prove Theorem~\ref{thm:supercric_perc_Z2}, it is sufficient to prove the existence with high probability of such pair of disjoint vertices.

A natural way to construct   this is to find a linear number of vertex-disjoint paths   in $\Lambda_n(p)$  between its left and right boundary.
 Suppose that  there exists a  collection of $m$ disjoint   left-to-right crossings of $\Lambda_n(p)$ that matches the vertex $(0, u_i)$ on the left boundary to the vertex $(n, v_i)$ on the right boundary for $1 \le i \le m$. Without loss of generality, we can assume $0 \le u_1 < u_2 < \cdots < u_{m} \le n$.  Since two vertex-disjoint left-to-right crossings in $\dZ^2$  can never cross each other, we always have $0 \le v_1 < v_2 < \cdots < v_{m} \le n$. Now we  take $U  = \{ (0, u_i) : 1 \le i \le m\}$ and $V = \{ (n,  v_i) : 1 \le i \le m \}$.  We consider  all vertex-disjoint path matchings between $U$ and $V$ in $\Lambda_n(p)$ (there exists at least one such matching  by our hypothesis) and take $M_n$ to be a minimal matching between $U$ and $V$.  Clearly, the property  (i) and (ii) above are satisfied. Since any   left-to-right crossing contains  at least $(n+1)$ vertices, the size of $M_n$ is at least $(n +1) m$. Thus to satisfy the property (iii) we need to show that with high probability we can find at least $cn$ many vertex-disjoint left-to-right crossings in $\Lambda_n(p)$.

 Towards this end, let $\ell_n$ denote the maximum number of vertex-disjoint paths   in $\Lambda_n(p)$  between its left and right boundary. By Menger's theorem, $\ell_n$ is also equal to the size of a minimum vertex cut of $\Lambda_n(p)$, that is, a set of vertices of smallest size that  must be removed to disconnect the left and right boundary of $\Lambda_n(p)$.  Note that to bound  $\ell_n$ from below, it suffices to find a lower bound on  the size of a minimum edge cut  of $\Lambda_n(p)$, since the size of a minimum edge cut  is always  bounded above by  $4$ times the size of a minimum vertex cut. This is because deleting all the edges incident to the vertices in a minimum vertex cover gives an edge cut. The reason behind considering minimum edge cut instead of  minimum vertex cut is that the size of the former can be related to certain line-to-line first passage time in the dual graph of $\Lambda_n$, whose edges are weighted by i.i.d.\ $\mathrm{Ber}(p)$. We describe this connection below.

  Let $\Lambda_n^*$ (called the dual of $\Lambda_n$)  be a graph with vertices $\{ (x+ \frac12, y + \frac12):  0 \le x \le n-1, - 1 \le y \le n\} $, with all edges of connecting the pair of vertices with $\ell_1$-distance exactly $1$, except for those in top and bottom sides.  To each edge $e$ of $\Lambda_n^*$, we assign a random weight  of value $1$ or $0$ depending on whether the unique edge of $\Lambda_n$,  which $e$ crosses,  is present or absent  in the graph $\Lambda_n(p)$. Hence, the edge weights of $\Lambda_n^*$ are i.i.d.\ $\mathrm{Ber}(p)$. Now here is the crucial observation. The size of minimum edge cut of $\Lambda_n(p)$, by duality, is same as  the minimum weight  of a path from the top to bottom boundary of $\Lambda_n^*$. Moreover, since the dual lattice of $\dZ^2$ is isomorphic to $\dZ^2$, the minimum weight of a top-to-bottom crossing  in $\Lambda_n^*$ is equal in distribution to the line-to-line passage time  $t_{n+1,n-1}(\mathrm{Ber}(p))$ in $\dZ^2$, where
  \begin{align*}
   t_{n, m}(F) := \inf \Big\{ \sum_{e \in \gamma} t(e):  &\gamma \text{ is a path in $\dZ^2$  joining } (0, a), (n, b) \text{ for some }  0 \le a, b \le m \\
   &\text{ and  $\gamma$ is contained in } [0, n] \times [0, m] \Big\},
   \end{align*}
   and $t(e)$, the weight of edge $e$ of $\dZ^2$, are i.i.d.\  with nonnegative distribution $F$. By Theorem 2.1(a) of \cite{GrimmettKesten}, for any nonnegative distribution $F$,  we have
   \begin{equation}\label{FPP_lower_bound}
    \liminf_{n \to \infty} \frac1n t_{n, n}(F) \ge \nu(F) \ \  a.s.,
    \end{equation}
  where $\nu(F) < \infty $ is called the speed (or time-constant) of the first passage percolation on $\dZ^2$ with i.i.d.\  $F$ edge weights, that is,
  \[ \frac 1n a_{0, n}(F)  \to \nu(F)\ \ \text{ in probability}, \]
  where
 \begin{align*}
   a_{0, n}(F) := \inf \Big\{ \sum_{e \in \gamma} t(e):  &\gamma \text{ is a path in $\dZ^2$  joining } (0, 0), (n, 0)  \Big\}.
   \end{align*}
It is a classical fact due to Kesten \cite{Kesten} that the speed is strictly positive  or $\nu(F)>0$ if and only if $F(0)< p_c(\dZ^2) = \frac12$.  This ensures that $\nu(\mathrm{Ber}(p)) >0$ in the supercritical regime $p > \frac12$.
Therefore, for any $\veps >0$, with probability tending to one,
  \[ t_{n+1,n-1}(\mathrm{Ber}(p)) \ge t_{n+1,n+1}(\mathrm{Ber}(p)) \ge \big(\nu(\mathrm{Ber}(p))  - \veps\big) (n+1),\]
  which implies that
  \[ \lim_{n \to \infty} \mathbb {P} \left(\ell_n \ge \frac14 \big(\nu(\mathrm{Ber}(p))  - \veps\big) n \right)=1. \]
Hence  the property (3) is satisfied with $c = \frac14 \big(\nu(\mathrm{Ber}(p))  - \veps\big) $ for any $\veps>0$. Therefore, the total mass of the continuous part of $\mu_\rho$ is bounded below by $  \frac14 \nu(\mathrm{Ber}(p)) $.

This concludes the proof of Theorem~\ref{thm:supercric_perc_Z2}. \ep

\section{Spectrum of unimodular trees}
\label{sec:trees}

\subsection{Stability of unimodularity}
\label{subsec:stab}

In the sequel, we will use a few times that unimodularity is stable by weights mappings, global conditioning and invariant percolation. More precisely, let $ (G, o)$ be a unimodular random weighted rooted graph with distribution $\rho$. The weights on $G$ are denoted by $\omega : V^2 \to \dZ$. The following trivially holds :

{\em Weight mapping : }
let $\psi : \cG^* \to \dZ$ and $\phi : \cG^{**} \to \dZ$ be two measurable functions. We define $\bar G$ as the weighted graph with weights $\bar \omega$, obtained from $G$ by setting for $u \in V$, $\omega (u,u) = \psi (G,u)$ and for $u,v \in V^2$ with $\{u,v\} \in E(G)$, $\omega(u,v)  = \phi ( G,u,v)$. The  random rooted weighted graph $ (\bar  G,o)$ is unimodular. Indeed, the $\cG^* \to \cG^*$ map $G \mapsto \bar G$ is measurable and we can apply \eqref{eq:defunimod}  to $f (G,u,v) = h ( \bar G,u,v)$ for any measurable $h : \cG^{**} \to \dR_+$.

{\em Global conditioning : }
let $A$ be a measurable event on $\cG^*$ which is invariant by re-rooting: i.e. for any $ (G,o)$ and $( G',o)$ in $\cG^*$ such that $G$ and $ G'$ are isomorphic, we have $(G,o) \in A$ iff $( G',o) \in A$. Then,  if $\rho ( A) >0$,  the random rooted weighted graph $ (G,o)$ conditioned on $ ( G,o) \in A$ is also unimodular (apply \eqref{eq:defunimod}  to $f (G,u,v) = \IND ( (G,u) \in A ) h ( G,u,v)$ for any measurable $h : \cG^{**} \to \dR_+$).

{\em Invariant percolation : } let $B\subset \dZ$. We may define a random weighted graph $\hat G$ with edge set $E(\hat G) \subset E (G)$ by putting the edge $\{u,v\} \in E( G)$ in $E(\hat G)$ if both $\omega(u,v)$ and $\omega(v,u)$ are  in $B$. We leave the remaining weights unchanged. Then the random weighted rooted graph $(\hat G(o),o)$ is also unimodular (apply \eqref{eq:defunimod}  to $f (G,u,v) = h ( \hat G(u) ,u,v)$ for any measurable $h : \cG^{**} \to \dR_+$). 

As an application the measure $\rho'$ defined  in the statement of Theorem \ref{th:treesILE} is unimodular. Indeed, consider the weight mapping for $v \in V$, $\omega(v,v) = \IND ( v \in L)$ and for $\{u,v\} \in E$, $\omega(u,v) = \omega (v,u) = \IND ( \omega(u,u) =  \omega(v,v))$. Then we perform an invariant percolation with $B = \{1\}$ and finally a global conditioning by $A = \{\hbox{all vertices in $G$ satisfying $\omega(v,v) = 0$}\}$.

\subsection{Proof of Theorem \ref{th:treesILE}}

Consider the unimodular weighted tree $(T,L,o)$.  Our main strategy will be to construct a suitable  invariant labeling on $T$ using the invariant line ensemble $L$ and then apply Theorem~\ref{th:tool2}.

We may identify $L$ as a disjoint union of countable lines $(\ell_i)_{i}$. Each such line $\ell \subset L$ has two topological ends. We enlarge our probability space and associate to each line an independent Bernoulli variable with parameter $1/2$. This allows to orient each line $\ell \subset L$.  This can be done by choosing the unique  vertex on the line $\ell$ whose distance from the root $o$ is minimum and then by picking one of its two neighbors on $\ell$ using the Bernoulli coin toss.

Let us denote by $(\overrightarrow {\ell_i})_i$ the oriented lines. We obtain this way a unimodular weighted graph $(T,\omega,o)$ where $\omega(u,v) = 1$ if the oriented edge $(u,v) \in \overrightarrow {\ell_i}$ for some $k$, $\omega(u,v) = -1$ if $(v,u) \in  \overrightarrow {\ell_i}$, and otherwise $\omega(u,v) = 0$.

Now, we fix some integer $k \geq 1$. There are exactly $k$ functions $\eta : V \mapsto \dZ/{k \dZ}$ such that  the discrete gradient of $\eta$ is equal to $\omega$ (i.e.\ such that for any $u,v \in V$ with $\{u,v\} \in E$, $\eta (u) - \eta (v) = \omega (v,u)$ $\mathrm{mod}(k)$) since given the gradient $\omega$, the function $\eta$ is completely determined by its value at the root.  We may enlarge our probability space in order to sample, given $(T,\omega,o)$, such a function $\eta$ uniformly at random. Then the vertex-weighted random rooted graph $(T,\eta,o)$ is unimodular.

In summary, we have obtained an invariant labelling $\eta$ of $(T,o)$ such that all vertices $v \in V$ outside $L$ are level, all vertices in $L$ such that $\eta (v) \ne 0$ are prodigy, and vertices in $L$ such that $\eta (v) = 0$ are bad. By Theorem \ref{th:tool2}, we deduce that for any real $\lambda$,
$$
\mu_\rho (\lambda) \leq \dP( o \hbox{ is bad}) + \sum_{j} \ell_j,
$$
where $\ell_j = \dE \langle e_o , P_j e_o \rangle $ and $P_j$ is the projection operator of the eigenspace of $\lambda$ in the adjacency operator $A_j$ spanned by vertices with label $j$. Now, observe that the set of level vertices with label $j$ are at graph distance at least $2$ from the  set of level vertices with label $i \ne j $. It implies that the operators $A_j$ commute and $A'$, the adjacency operator of $T' = T \backslash L$, can be decomposed as a direct sum of the operators $A_j$. It follows that, if $P'$ is the projection operator of the eigenspace of $\lambda$ in $A'$
$$
\sum_j \ell_j =  \dE \langle e_o , P' e_o \rangle  =  \dP( o \notin L) \mu_{\rho'} (\lambda).
$$
Also, by construction, $\dP( o \hbox{ is bad}) $ is upper bounded by $1/k$. Since $k$ is arbitrary, we find
$$
\mu_\rho (\lambda) \leq \dP( o \notin L) \mu_{\rho'} (\lambda).
$$
This concludes the proof of  Theorem \ref{th:treesILE}. \ep

\begin{remark}\textnormal{
In the proof of Theorem \ref{th:treesILE}, we have used our tool Theorem \ref{th:tool2}. It is natural to ask if we could have used Theorem \ref{th:tool1} together with some finite graphs sequence $(G_n)$ having local weak limit $(T,o)$ instead. We could match the set of $v \in L$ such that $\eta(v) = 1$ to the set of $v \in L$ such that $\eta(v) = k-1$ forbidding the set of $v \in L$ with $\eta (v)= 0$. Note however that if the weighted graph $(G_n, \eta_n)$ has local weak limit $(T,\eta,o)$ then the boundary of $\eta_n^{-1} (j)$ for $j \in  \dZ/{k \dZ}$ has cardinal $(2/k + o(1))\dP(o \in L) |V(G_n)|$. In particular, the sequence $(G_n)$ must have a small Cheeger constant. It implies for example that when $p(0)=p(1)=0$  we could not use the usual random graphs as finite approximations of infinite unimodular Galton-Watson trees since they have a Cheeger constant bounded away from $0$, see Durrett \cite{MR2656427}. }
\end{remark}

\subsection{Construction of invariant line ensemble on unimodular tree}

We will say that a unimodular tree $(T,o)$ is {\em Hamiltonian} if there exists an invariant line ensemble $L$ that contains the root $o$ with probability 1. As the first example, we show that $d$-regular infinite tree is Hamiltonian.

\begin{lemma}\label{le:regHam}
For any integer $d \geq 2$, the $d$-regular infinite tree is Hamiltonian.
\end{lemma}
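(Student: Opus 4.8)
The plan is to construct an explicit invariant line ensemble $L$ on the $d$-regular tree $T_d$ that covers every vertex almost surely, i.e.\ $\dP(o \in L) = 1$. For $d = 2$ there is nothing to do, since $T_2 = \dZ$ is itself a line, so we may assume $d \ge 3$. The idea is to break $T_d$ into a disjoint union of bi-infinite paths by making, at each vertex $v$, a choice of an ordered pair of distinct neighbors $(v^-, v^+)$ — the ``incoming'' and ``outgoing'' edges of the line through $v$ — in such a way that these local choices are globally consistent (if $u$ picks $v$ as its outgoing neighbor then $v$ picks $u$ as its incoming neighbor) and jointly invariant under the automorphism group of $T_d$. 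The union of the resulting oriented edges then forms a line ensemble containing every vertex; invariance of $L$ follows because the construction is a deterministic equivariant function of some auxiliary i.i.d.\ labels, so $(T_d, L, o)$ is unimodular by the weight-mapping stability discussed in \S\ref{subsec:stab}.

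The key step is therefore to produce such an invariant perfect ``oriented $2$-factor'' of $T_d$. First I would fix a bi-infinite geodesic line $\xi$ in $T_d$ to serve as an ``axis''; more robustly, one can attach to each edge an independent uniform $[0,1]$ label and use these to select, in an automorphism-equivariant way, a two-ended subforest. Concretely: orient $T_d$ by choosing, uniformly at random, one end $\omega$ of the tree (this is not invariant on its own on $T_d$, so instead) — better, use the edge labels to define for every vertex $v$ the two incident edges of largest label, and let $L_0$ be the set of edges that are among the two largest-labeled at \emph{both} endpoints. One checks $L_0$ is an invariant subgraph in which every vertex has degree $0$, $1$, or $2$; the degree-$2$ vertices already lie on lines, and the remaining low-degree vertices must be absorbed. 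I would then iterate or do a more careful greedy matching: at each vertex of current degree $< 2$ in $L_0$, use the labels to propose additional edges and resolve conflicts by the labels, repeating countably often. Since $T_d$ has no finite components and is $d$-regular with $d \ge 3$, there is always enough room locally to complete every vertex to degree exactly $2$, and a fixed-point/limit argument over the (monotone, label-measurable) sequence of partial line ensembles yields an invariant $L$ with every vertex of degree $2$, hence a line ensemble with $\dP(o \in L) = 1$.

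An alternative, cleaner route that I would try in parallel: exhibit a \emph{deterministic} line ensemble on $T_d$ invariant under a large subgroup, then average. For instance, realize $T_d$ as the Cayley graph of $\dZ * (\dZ/2)^{d-2}$ or of the free product generating a Hamiltonian decomposition, and use the $\dZ$-translation structure to write down an explicit bi-infinite path decomposition; pushing forward by the whole automorphism group (equivalently, randomizing the choice of basepoint/coset representatives using i.i.d.\ uniform labels) makes it invariant. Either way the output is the same: an invariant line ensemble of density $1$.

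The main obstacle I anticipate is \emph{global consistency under invariance}: it is easy to give each vertex a locally valid pair of neighbors, but ensuring the orientations match across every edge while the whole assignment remains a measurable equivariant function of the i.i.d.\ labels requires care — naive greedy schemes can create ``monotone'' inconsistencies or leave a positive-density set of unmatched vertices. The resolution is to set up the construction as a monotone limit of partial line ensembles, each a label-measurable equivariant function, and to argue (using $d \ge 3$ and the absence of finite components, so that the tree is locally ``roomy'') that the limit has no vertices of degree $< 2$; the degree-$\le 2$ constraint is automatically preserved along the way by always refusing edges at degree-$2$ vertices. Unimodularity of the limit then follows from closedness of $\cP_{\UNI}(\cG^*)$ under local weak limits.
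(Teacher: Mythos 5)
Your overall framework (i.i.d.\ auxiliary labels, equivariant construction, then appeal to weight-mapping stability for unimodularity) is the right one, and matches the paper's in spirit. But the construction you actually sketch has a genuine gap, and it is precisely at the point you flag as ``the main obstacle.''

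Your concrete local rule --- put $\{u,v\}$ in $L_0$ iff it is among the two largest-labeled edges at \emph{both} endpoints --- produces a subgraph with a positive density of vertices of degree $0$ or $1$ (for large $d$, most vertices have degree $0$ in $L_0$). The ``iterate/greedy'' patching that follows is where the argument has to do its real work, and you do not give a rule, let alone a proof that the iteration converges to a spanning $2$-regular subgraph. A monotone limit of partial line ensembles need not have full density: nothing you say rules out vertices that get permanently stuck at degree $0$ or $1$ because all their neighbours saturate first. The assertion ``there is always enough room locally'' is exactly the statement that needs a proof, and for symmetric local-greedy schemes it is \emph{not} automatic --- this is the same difficulty that makes invariant perfect matchings and $2$-factors as factors of i.i.d.\ a nontrivial subject. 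Your alternative route via a Cayley-graph realization of $T_d$ is also left entirely unexecuted, with the randomization step (``pushing forward by the whole automorphism group'') being precisely the place where the hard work lives.

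The paper's construction sidesteps this completeness problem entirely by being a rooted breadth-first sweep: each vertex $u$ ranks its $d$ neighbours using i.i.d.\ directed edge labels $\xi(u,\cdot)$, and then the line ensemble is built outward from the root, with each vertex choosing one or two top-ranked children in $T^o_u$ depending on whether the edge from its parent is already in $L$. Since $d\geq 3$, every vertex has at least two children, so the sweep never gets stuck; completeness (density $1$) holds \emph{by construction}. What is then nontrivial --- and what the paper verifies explicitly, rather than leaving implicit --- is that the resulting $L$ is invariant despite the root playing an overtly special role; this is done by a distributional symmetry argument for functions supported on edges, exploiting that the rooted subtrees $T^v_u$ are all isomorphic $(d-1)$-ary trees and that, conditioned on $L(u,v)$, the restrictions of $L$ to $T^o_{v_1}$ and $T^{v_1}_o$ are i.i.d. So the two approaches have opposite weak points: in the paper, completeness is trivial and invariance requires an argument; in your sketch, invariance of each partial stage is straightforward but completeness of the limit is unjustified. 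You would need to supply the missing completeness argument (or switch to a sequential, root-based construction and then prove invariance) to close the gap.
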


\bp{} The case $d=2$ is trivial : in this case $T = (V,E)$ itself is a line ensemble.  Let us assume $d \geq 3$. On a probability space, we attach to each oriented edge $(u,v)$ independent variables, $\xi(u,v)$ uniformly distributed on $[0,1]$. With probability one, for each $u \in V$, we may then order its $d$ neighbours according to value of $\xi(u,\cdot)$.  This gives a weighted graph $(T,\omega,o)$ such that, for each $u \in V$ with $\{u,v\} \in V$,  $\omega (u,v) \in \{1,\cdots, d\}$ is the rank of vertex $v$ for $u$. Note that $\omega(u,v)$ may be different from $\omega(v,u)$.  We now build a line ensemble as follows. The root picks its first two neighbours, say $u_1$, $u_2$, and we set $L(u_1, o) =  L(u_2, o) = 1$, for its other neighbours, we set $L(u,o) = 0$. To define further $L$, let us introduce some notation. For $u \ne v$, let $T^v_u$ be the tree rooted at $u$ spanned by the vertices whose shortest path in $T$ to $v$ meets $u$, and let $a^v(u)$ be the first visited vertex on the shortest path from $u$ to $v$ (see Figure \ref{fig:ancestor}). Then,  we define iteratively the line ensemble  (we define $L(u, \cdot)$ for a vertex $u$ for which $L(a^o(u), \cdot)$ have already been defined)  according to the rule :
 if $L(u,a^o(u)) =1$ then $u$ picks its first neighbour in $T^o_u$, say $v_1$, and we set $L(u,v_1) = 1$, otherwise $L(u,a^o(u)) = 0$ and $u$ picks its two first neighbours in $T^o_u$, say $v_1,v_2$, and we set $L(u,v_1) = L(u,v_2) = 1$. In both cases, for the other neighbours of $u$ in $T^o_u$, we set $L(u,v) =0$.

\begin{figure}[htb]
\centering\begin{psfrags}
 \psfrag{u}{$u$}
  \psfrag{v}{$v$}
  \psfrag{Tu}{$T^v_u$}
  \psfrag{Tv}{$T^u_v$}
  \psfrag{a}{$a^v(u)$}
  \includegraphics[width=12cm]{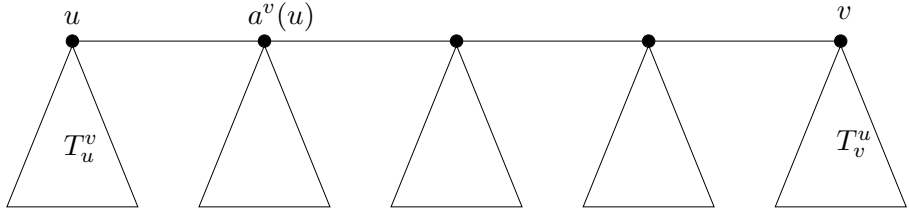}
\end{psfrags}
\caption{Definition of $a^v(u)$ and $T^v _u$.}\label{fig:ancestor}
\end{figure}

Iterating this procedure gives a line ensemble which covers all vertices. It is however not so clear that this line ensemble is indeed invariant since, in the construction, the root seems to play a special role. In order to verify \eqref{eq:defunimod}, it is sufficient to restrict to functions $f(G,L,u,v)$ such that $f(G,L,u,v) = 0$ unless $\{u,v\} \in E$ (see \cite[Proposition 2.2]{aldouslyons}). Let us denote $v_1, \cdots, v_d$ the neighbours of the root, we
have
$$
\dE \sum_{k=1} ^ d f ( T,L,o,v_k) = (d-2) \dE [ f (T,L,o,v_1) | L(v_1,o) = 0 ] + 2 \dE [ f (T,L,o,v_1) | L(v_1,o) = 1].
$$
We notice that the rooted trees $T^v_u$, $u \ne v$, are isomorphic ($T^v_u$ is a $(d-1)$-ary tree) and that, given the value of $L(u,v_1)$, the restriction of $L$ to $T^o_{v_1}$ and $T^{v_1}_o$ have the same law (and are independent). Since $L(u,v) = L(v,u)$, it follows that, for $\varepsilon \in \{0,1\}$,
$$
\dE [ f (T,L,o,v_1) | L(v_1,o) = \varepsilon ] = \dE [ f (T,L,v_1,o) | L(o,v_1) = \varepsilon ].
$$
We have thus checked that $L$ is an invariant line ensemble. \ep

\begin{lemma}\label{l:2ktree}
Let $k\ge 3$. Every unimodular tree with all degrees either $2$ or $k$ has an invariant line ensemble of density $\dE \,\deg(o)/k$.
\end{lemma}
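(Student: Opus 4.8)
The plan is to reduce Lemma \ref{l:2ktree} to the $k$-regular case already handled in Lemma \ref{le:regHam}, by a two-step surgery on the unimodular tree $(T,o)$. First, I would peel off the degree-$2$ vertices. The degree-$2$ vertices of $T$ form a union of finite paths and bi-infinite lines; contracting each maximal such path to a single edge (or keeping a bi-infinite line as its own component) produces a new graph $\bar T$ in which every vertex has degree $k$, and this operation is an invariant weight-mapping/percolation-type construction, so $(\bar T, \bar o)$ is again unimodular (after the usual re-rooting bookkeeping of Section \ref{subsec:stab}). The one subtlety is that $\bar T$ need not be connected: a component consisting of a single bi-infinite line of degree-$2$ vertices with no degree-$k$ vertex can occur, but such a component is itself already a line ensemble, so we may treat it separately and assume every component of $\bar T$ is a $k$-regular tree with at least one vertex.

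Next, on each $k$-regular component of $\bar T$ I would invoke Lemma \ref{le:regHam} to obtain an invariant Hamiltonian line ensemble $\bar L$ covering all vertices of $\bar T$; concretely one runs the construction of Lemma \ref{le:regHam} on the component of the (conditioned) root. Pulling $\bar L$ back through the contraction map, each edge of $\bar T$ in $\bar L$ corresponds to a path in $T$ through degree-$2$ vertices, and stringing these together gives a line ensemble $L$ of $T$ that covers every vertex that lies on a contracted path or is an endpoint of one — i.e.\ every vertex that is either of degree $k$ or of degree $2$ and adjacent (through a chain of degree-$2$ vertices) to something. The only vertices of $T$ not automatically covered are those in a bi-infinite line of degree-$2$ vertices, which as noted above are covered trivially. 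So $L$ is a genuine line ensemble of $T$, and its invariance follows from the invariance of $\bar L$ together with the fact that the contraction and its inverse are equivariant; this is exactly the kind of statement that the stability results of \S\ref{subsec:stab} are designed to certify, checking \eqref{eq:defunimod} against test functions supported on adjacent pairs as in the proof of Lemma \ref{le:regHam}.

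It remains to compute the density $\dP(o \in L)$. Since $L$ covers every vertex of $T$, one has $\dP(o\in L)=1$, which a priori looks stronger than the claimed $\dE\,\deg(o)/k$ — but the point is that the lemma is stated for a possibly larger ambient unimodular tree in which not all degrees are $2$ or $k$; re-reading the statement, the tree itself has all degrees $2$ or $k$, so in fact one should get density $1$ unless the intended reading is that we only require $L$ to be a \emph{sub}-line-ensemble and we are matching the mass of the $k$-regular skeleton. To land on $\dE\,\deg(o)/k$ exactly, the right construction is slightly different: rather than extend through the degree-$2$ chains, put into $L$ precisely the edges coming from the Hamiltonian cycle structure on the degree-$k$ skeleton \emph{without} re-expanding, so that a degree-$2$ vertex is in $L$ iff it sits on a contracted segment both of whose $\bar T$-endpoints were linked in $\bar L$. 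A mass-transport (unimodularity) computation then distributes the two $\bar L$-edges at each degree-$k$ vertex of $\bar T$ over the degree-$2$ vertices along the incident segments, and a direct application of \eqref{eq:defunimod} with the transport rule "each degree-$k$ vertex sends mass $2/k$ split among its $k$ half-edge-chains" yields $\dP(o\in L)=\dE\,\deg(o)/k$ after averaging over the uniform root.

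The main obstacle I anticipate is bookkeeping the invariance through the contraction: the map $T\mapsto\bar T$ does not preserve the vertex set, so one must be careful that the Bernoulli/uniform randomization used in Lemma \ref{le:regHam} is attached in an equivariant way to $\bar T$ and then pulled back, and that the global conditioning "$o\notin$ (a bi-infinite degree-$2$ line)" has the right effect on the remaining mass. Everything else — the existence of the skeleton line ensemble, the stability of unimodularity under weight-mappings and conditioning, and the mass-transport computation of the density — is routine given Lemma \ref{le:regHam} and \S\ref{subsec:stab}, so the proof is essentially a careful assembly rather than a new idea.
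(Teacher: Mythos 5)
Your high-level plan --- contract the maximal degree-$2$ paths of $T$ to a $k$-regular tree $\bar T$, apply Lemma~\ref{le:regHam} to produce a Hamiltonian invariant line ensemble $\bar L$ on $\bar T$, and re-expand the $\bar L$-edges into $T$-paths to get a line ensemble $L$ on $T$ --- is exactly the paper's. The problem is in your density analysis, which contains a genuine error. You assert that the re-expanded $L$ covers every vertex of $T$, so that $\dP(o\in L)=1$. This is false: the re-expansion only includes those $T$-paths whose corresponding $\bar T$-edge actually lies in $\bar L$. A degree-$k$ vertex is always covered (exactly two of its $k$ incident edges lie in $L$, because $\bar L$ has degree $2$ everywhere), but a degree-$2$ vertex is covered iff the $\bar T$-edge through it lies in $\bar L$, which will turn out to have probability $2/k<1$. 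The ``slightly different construction'' you offer as a fix is not actually different, and as written it is internally inconsistent: you say to ``put into $L$ precisely the edges coming from the Hamiltonian cycle structure on the degree-$k$ skeleton \emph{without} re-expanding,'' and then describe a rule under which some degree-$2$ vertices are in $L$, which requires re-expanding. What needed fixing was your analysis, not the construction. Finally, the transport rule ``each degree-$k$ vertex sends mass $2/k$ split among its $k$ half-edge-chains'' is not concrete enough to produce the claimed density: it is unspecified what a degree-$2$ vertex receives, and the chains have varying lengths, so the bookkeeping does not close.

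The paper's computation, which you should substitute for your last paragraph, is short. Since $\bar L$ is a Hamiltonian invariant line ensemble on the $k$-regular tree, and the construction of Lemma~\ref{le:regHam} treats the $k$ incident edges of a vertex exchangeably, each edge of $\bar T$ lies in $\bar L$ with probability $2/k$. Pulling back, each edge of $T$ lies in $L$ with probability $2/k$, hence $\dE\,\deg_L(o)=\tfrac{2}{k}\,\dE\,\deg(o)$. Because $L$ is a line ensemble, $\deg_L(o)\in\{0,2\}$, so $\dP(o\in L)=\tfrac12\,\dE\,\deg_L(o)=\dE\,\deg(o)/k$, as required.
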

\bp{}
Sample the unimodular random tree $(T,o)$.
Consider the $k$-regular labeled tree $T'$ that one gets by contracting each induced subgraph which is a path to a single edge labeled by the number of vertices.  This tree has an invariant line ensemble $L'$ with density 1; this corresponds to a line ensemble $L$ in $T$. Since each edge in $T'$ is contained in $L'$ with probability $2/k$, it follows that each edge of $T$ is contained in $L$ with probability $2/k$. Thus the expected degree of $L$ at the root of $T$ given $T$ is $\frac{2}{k}\deg(o)$. The claim follows after averaging over $T$.
\ep
\bigskip

The following proves Proposition \ref{prop:ILE}, part 2 for the case $q=0$ (i.e.\ when there are no ``bushes"). 

\begin{proposition}\label{p:le}
Let $T$ be a unimodular tree with degrees in $\{2,3,\ldots, d\}$. Then $T$ contains an invariant line ensemble with density
at least $\frac{1}{3}\dE\,\deg(o)/d$. In fact, when $d\ge 6$ the density is at least  $\frac{1}{3}\dE\,\deg(o)/(d-4)$.
\end{proposition}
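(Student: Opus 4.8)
\textbf{Proof proposal for Proposition~\ref{p:le}.}

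The plan is to reduce the general case of degrees in $\{2,3,\ldots,d\}$ to the bounded-degree regular-type situations already handled in Lemma~\ref{l:2ktree} and Lemma~\ref{le:regHam}, by first \emph{suppressing} degree-$2$ vertices and then \emph{splitting} every remaining high-degree vertex into a small gadget of vertices of degree $3$ only. Concretely, first contract every maximal induced path of degree-$2$ vertices to a single labeled edge, as in the proof of Lemma~\ref{l:2ktree}; this produces a unimodular tree $T'$ with minimum degree $3$ and maximum degree $d$, and it suffices (after unwinding the contraction, which only inflates a line ensemble's density as in Lemma~\ref{l:2ktree}) to produce an invariant line ensemble of $T'$ of density at least $\frac13$. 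Second, replace each vertex $v$ of $T'$ of degree $\deg(v)=r\ge 3$ by a path $P_v$ on $r-2$ internal vertices, attaching the $r$ edges at $v$ to the two endpoints of $P_v$ and its $r-3$ internal ``free'' slots in a prescribed, locally determined way; this yields a cubic unimodular tree $\widehat T$. This construction is invariant because the gadget is a measurable function of the local data at $v$ (one must invoke the weight-mapping and invariant-percolation stability from \S\ref{subsec:stab}, plus some extra i.i.d.\ randomness to break ties in how the $r$ half-edges are distributed among the $r-2$ new vertices).

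Next I would apply Lemma~\ref{le:regHam} (or rather the bounded-degree invariant analogue of it, noting $\widehat T$ has degrees in $\{2,3\}$, not exactly $3$ — so really Proposition~\ref{prop:ILE}(i)/(ii) or Lemma~\ref{l:2ktree} in the $k=3$ case is the right black box): $\widehat T$, being a cubic (or $\{2,3\}$-valued) unimodular tree, contains an invariant line ensemble $\widehat L$ of density close to $1$ — more precisely of density $\tfrac23\,\dE\deg_{\widehat T}(\widehat o)$ after the reduction, which for a cubic tree is essentially $1$. Then I pull $\widehat L$ back to a line ensemble $L'$ of $T'$: a vertex $v\in T'$ lies on $L'$ iff the gadget path $P_v$ is entirely traversed by $\widehat L$, i.e.\ iff the two gadget edges that become ``real'' edges of $T'$ at the two ends of $P_v$ both belong to $\widehat L$. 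The whole $P_v$ is traversed by $\widehat L$ unless $\widehat L$ ``turns around'' somewhere inside the gadget, and by a counting/averaging argument over the internal structure one shows $\dP(v\in L'\mid T')\ge c$ for an absolute constant. Finally undo the degree-$2$ contraction to get $L$ on $T$ with density at least $\tfrac13\,\dE\deg(o)/d$; the sharpened bound $\tfrac13\,\dE\deg(o)/(d-4)$ for $d\ge 6$ comes from noting that the gadget for a degree-$r$ vertex uses only $r-2$ new vertices and (I expect) a more careful accounting of how many gadget vertices are actually ``wasted,'' gaining an extra additive saving of order $4$ in the denominator.

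The main obstacle, and the step I would spend the most care on, is verifying invariance and computing the density lower bound through the two-stage transformation. The transformation $T\rightsquigarrow T'\rightsquigarrow\widehat T$ is itself invariant (weight mapping plus extra independent marks), but the line ensemble obtained on $\widehat T$ by applying the earlier lemma is only invariant on $\widehat T$; I need its \emph{pushforward} to $T$ to be invariant, which requires that the whole coupling $(T,\widehat T,\widehat L)$ be jointly unimodular and that the map ``$\widehat L \mapsto L$'' be equivariant — this is where one must be scrupulous, since naively the gadget $P_v$ singles out its endpoints and could destroy the root-exchange symmetry \eqref{eq:defunimod}. The fix is to make the assignment of the $r$ half-edges at $v$ to the endpoints/internal vertices of $P_v$ symmetric in law (e.g.\ uniformly random among valid assignments, using fresh i.i.d.\ coins), exactly as in the proof of Lemma~\ref{le:regHam} where an extra Bernoulli is used to orient lines. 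Once that symmetrization is in place, the density computation is a finite, deterministic optimization over the possible local configurations of a degree-$r$ vertex, and the constants $\tfrac13$ and $d-4$ should fall out; I would present that optimization explicitly for small $r$ and then argue the general bound by monotonicity in $r$.
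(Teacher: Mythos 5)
Your proposal takes a genuinely different route from the paper's. You subdivide each high-degree vertex $v$ of degree $r$ into a cubic gadget (a path on $r-2$ degree-$3$ vertices), run the $3$-regular Hamiltonian construction of Lemma~\ref{le:regHam} on the resulting cubic tree $\widehat T$, and try to pull the line ensemble back through the gadget to $T'$ and then to $T$. The paper instead iteratively \emph{splits} each degree-$(3k+2j)$ vertex (with $j\in\{0,1,2\}$) into $k$ degree-$3$ pieces and $j$ degree-$2$ pieces, producing a nested sequence of invariant subforests $F_0 \supset F_1 \supset \cdots$ of $T$ itself, to which Lemma~\ref{l:2ktree} applies directly; no pullback is ever needed. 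Tracking the expected degree $\ev\deg_{F_i}(o)$ (which shrinks by the factor $\frac23$ at each step) and the maximal degree $d_i$ (which satisfies $d_{i+1}\le \frac23 d_i+\frac43$) through this recursion is exactly what produces the constants $\frac13$ and $d-4$.

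The central gap in your approach is the pullback step. If $\widehat L$ is the Hamiltonian line ensemble on the cubic tree $\widehat T$, its restriction to a gadget $P_v$ is a disjoint union of paths, each entering and leaving $P_v$ through a pair of attachment half-edges. When more than one such path enters $P_v$, the naive ``pullback'' gives $v$ degree $4$ or more, so the result is \emph{not} a line ensemble of $T'$. This already occurs for $r=4$: the two gadget vertices each carry two half-edges and one internal edge; if $\widehat L$ avoids the internal edge, all four half-edges at $v$ are used. You would have to prune the pullback, which costs density in a way your sketch does not quantify; the constants $\frac13$ and $d-4$ are deferred to a ``finite, deterministic optimization'' that is never carried out, and it is not clear those constants would in fact emerge from the gadget scheme. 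A secondary issue is the bookkeeping across the contraction of degree-$2$ paths: ``density at least $\frac13$ on $T'$'' is not a correct reformulation of the target ``density at least $\frac13\ev\deg(o)/d$ on $T$''. As in the proof of Lemma~\ref{l:2ktree}, the quantity that transports cleanly through contraction is the expected degree $\ev\deg_L(o)$ (equivalently, the marginal probability that a fixed edge lies in $L$), not $\dP(o\in L)$, and your sketch does not set up that accounting.
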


A tree constructed of $d$-stars with paths of length $m$ emanating shows that in some cases the optimal density can be arbitrary close to $\dE\, \deg(o)/d$. In this sense our bound is sharp up to a factor of $1/3$.

\bigskip

\bp{ of Proposition \ref{p:le}}
If $d \geq 6$ we argue as follows. For each $k$, we split all vertices of degree $3k+2j$ with $j=0,1,2$ into $k$ groups of vertices of degree $3$ and $j$ groups of vertices of degree $2$. We can perform this in an unimodular fashion by ordering the adjacent edges of a vertex uniformly at random (see the proof of Lemma \ref{le:regHam}).  This way we obtain a countable collections of trees $(T_n)_{n\geq 1}$.

By Lemma \ref{l:2ktree} each of these trees contains invariant line ensembles with expected degree $\frac{2}{3}\ev \deg_{T_n}(o)$. In particular, the expected degree of their union $F_1$ in $T$ is $\frac{2}{3}\dE \deg(o)$. We thus have found an invariant  subforest $F_1$ of $F_0=T$ with degrees in $\{0,2,4\ldots, 2k+2j\}$ and expected degree $\frac{2}{3}\dE \deg(o)$.

Iterating this construction $i$ times we get a sequence of subforests $F_i$ with expected degree $\left(\frac{2}{3}\right)^i \dE\, \deg(o)$. The maximal degree of $F_i$ is bounded above by some $d_i$ (with $d_0=d$), which satisfy the following recursion: if $d_i=3k+2j$ with $j=0,1,2$, then $d_{i+1}=2k+2j$. In particular, $d_i$ is even for $i\ge 1$, and
\begin{equation}\label{e:rec}
d_{i+1}\le \frac{2}{3}d_i+\frac{4}{3}.
\end{equation}

Let   $k$ be the first value so that $d_k\le 4$; by checking cases we see that $d_k=4$, and that
$d_{k-1}=5$ or $d_{k-1}=6$.  Assuming $k>1$ we also know that $d_{k-1}$ is even, so $d_{k-1}=6$. Otherwise, $k=1$ and then $d_0 = d$. However the assumption $d \ge 6$ yields to $d_0 = d = 6$. Hence in any case $d_{k-1} = 6$. Now using the inequality \eqref{e:rec} inductively we see that for $1 \le i\le k$ we have $d_{k-i}\ge \frac{4}{3}\left(\frac{3}{2}\right)^i+4$.
Setting $i=k$ and rearranging we get
$$
\left(\frac{2}{3}\right)^k\ge \frac{4}{3}\frac1{d-4}.
$$
The forest $F_k$ has degrees in $\{0,2,4\}$. Another application of Lemma \ref{l:2ktree} (with $k=4$ there)
gives an invariant line ensemble with density
\[
\frac{1}{4} \left(\frac{2}{3}\right)^k \dE\, \deg(o) \ge \frac{1}{3}\frac{\dE\,\deg(o)}{d-4}.
\]

If $d = 5$, then $k =1$, and the above argument gives an invariant line ensemble with density $ \frac{1}{4} \left(\frac{2}{3}\right) \dE\, \deg(o)$.

The only cases left are $d=3,4$. In the first case, just use Lemma \ref{l:2ktree} with $k=3$. In the second, split each degree 4 vertex in 2 groups of
degree 2 vertices as above. Then apply Lemma \ref{l:2ktree} with $k=3$ to get a subforest with degrees in ${0,2,4}$. Then apply the Lemma again with $k=4$. The density lower bounds are given by
$
\frac{1}{3}\dE \,\deg(o)$, $\frac{1}{6}\dE \,\deg(o)
$
respectively, and this proves the remaining cases. \ep
\bigskip

Recall that the core $C$ of a  tree $T$ is the induced subgraph of vertices such that  removal of each vertex in $C$  breaks $T$ into at least two infinite components.
The following is a reformulation of part (ii) of Proposition \ref{prop:ILE}.

\begin{corollary}[Removing bushes]\label{c:rbushes}
Let $(T,o)$ be an infinite  unimodular tree, with core $C$ and maximal degree $d$. Then Proposition \ref{p:le} holds with $\ev \deg(o)$ replaced by $\ev \deg(o)- 2\dP(o\notin C)$.
\end{corollary}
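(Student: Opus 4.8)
\emph{Proof plan.} I would begin with a structural description of the core. For an infinite unimodular tree $T$ with maximal degree $d$ I claim that the core $C$, if nonempty, is a connected infinite subtree with all degrees in $\{2,\dots,d\}$, while every component of $T\setminus C$ (a ``bush'') is a finite tree attached to $C$ along a single edge. The key elementary fact is: if $v\in C$ and $u\sim v$ is such that the branch of $T$ at $u$ pointing away from $v$ is infinite, then $u\in C$ --- since $v\in C$, $v$ has a second infinite branch not through $u$, so the branch of $u$ towards $v$ is infinite; and $u$ has finite degree, so its infinite away-branch contains an infinite branch of $u$ itself. From this, every vertex of $C$ has at least two neighbours in $C$ (so $C$ has minimal degree $\ge2$), $C$ is connected (take a vertex on the path between two core vertices), and --- applying the contrapositive to the vertex of a would-be infinite bush nearest $C$ --- all bushes are finite. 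In particular a nonempty core is an infinite tree, and any invariant line ensemble of $T$ is automatically supported on $C$, since a vertex of $L$-degree $2$ lies on a bi-infinite path and hence in $C$.

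Next I would transport the problem to the core. Membership in $C$ is a re-rooting--equivariant vertex marking and keeping only $C$-internal edges is an invariant percolation, so by the stability results of \S\ref{subsec:stab} the law $\rho_C$ of $(C,o)$ under $\rho(\,\cdot\mid o\in C)$ is unimodular (assuming $\dP_\rho(o\in C)>0$; the opposite case is handled by the identity below). Applying Proposition \ref{p:le} to $\rho_C$, whose maximal degree is some $d_C\le d$, produces an invariant line ensemble $L_C$ of the core with $\dP_{\rho_C}(o\in L_C)\ge\tfrac13\,\ev_{\rho_C}[\deg_C(o)]/d$, and inspecting the low-degree cases in the proof of Proposition \ref{p:le} shows that when $d\ge6$ the denominator can be taken to be $d-4$. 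The constructions in the proofs of Lemma \ref{le:regHam}, Lemma \ref{l:2ktree} and Proposition \ref{p:le} are all deterministic re-rooting--equivariant functionals of the tree decorated by i.i.d.\ edge-marks, so applying the same functional to the (decorated) core of $T$ defines an invariant line ensemble $L$ of $(T,o)$ with $L\subseteq C$; by the previous paragraph this loses nothing, and $\dP_\rho(o\in L)=\dP_\rho(o\in C)\,\dP_{\rho_C}(o\in L_C)$.

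It remains to identify $\dP_\rho(o\in C)\,\ev_{\rho_C}[\deg_C(o)]=\ev_\rho[\IND(o\in C)\deg_C(o)]$ with $\ev_\rho\deg(o)-2\dP_\rho(o\notin C)$, which I would establish by two applications of \eqref{eq:defunimod}. Transporting one unit from $o$ to each neighbour lying outside $C$ whenever $o\in C$, and using that a vertex outside $C$ has a neighbour in $C$ precisely when it is the (unique) attachment vertex of its bush, gives $\ev_\rho[\IND(o\in C)\deg_C(o)]=\ev_\rho\deg(o)-\ev_\rho[\deg_B(o)]-2\dP_\rho(A_o)$, where $\deg_B(o)$ is the degree of $o$ inside its bush ($0$ if $o\in C$) and $A_o$ is the event that $o$ is a bush attachment vertex. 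Orienting each bush edge towards the core and transporting one unit along it gives $\ev_\rho[\deg_B(o)]=2\dP_\rho(o\notin C,\,A_o^c)$, and adding the two relations yields the identity; when $C$ is a.s.\ empty it degenerates to $\ev_\rho\deg(o)=2$, so the target bound is $\le0$ and the empty line ensemble works. Combining everything, $\dP_\rho(o\in L)\ge\tfrac1{3d}\bigl(\ev_\rho\deg(o)-2\dP_\rho(o\notin C)\bigr)$, with $d$ replaced by $d-4$ when $d\ge6$, which is exactly Proposition \ref{p:le} with $\ev\deg(o)$ replaced by $\ev\deg(o)-2\dP(o\notin C)$. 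The main obstacle is the bookkeeping in the two mass transports --- particularly handling the bush attachment vertices correctly --- together with the minor measure-theoretic care in passing the line ensemble from $\rho_C$ back to $\rho$; the rest is routine.
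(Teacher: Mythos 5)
Your proof is correct and follows essentially the same strategy as the paper: pass to the core, apply Proposition~\ref{p:le} there, and establish the key identity $\ev\deg_C(o) = \ev\deg(o) - 2\dP(o\notin C)$ by mass transport. The only real difference is cosmetic --- you use two mass transports (core--to--bush--neighbour, then along bush edges towards the core), whereas the paper obtains the identity in one shot by having every vertex $v\notin C$ send unit mass to its unique neighbour nearest $C$ and noting $\deg_C(o)=\deg(o)-r-\one(o\notin C)$.
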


\begin{proof}
We clarify that $\deg_C(o)=0$ if $o\notin C$.  It suffices to to show that $\ev \deg_C(o) =\dE \deg(o)-2\dP(o\notin C)$. For this, let every vertex $v$ with $\deg_C(v) = 0$ send unit mass to the unique neighbor vertex closest to $C$ (or closest to the single end of $T$ in case $C$ is empty).
We have
$$
\deg_C(o) = \deg(o) - r -\one(o\notin C)
$$
where $r$ is the amount of mass $o$ receives. The claim now follows by mass transport : \eqref{eq:defunimod} applied to $f (G,o,v)$ equal to the amount of mass send by $o$ to $v$ gives $ \dP ( o \notin C) = \dE r$.
\end{proof}

We are now ready to prove the main assertion of Proposition \ref{prop:ILE}, repeated here as follows.

\begin{corollary}
Let $(T,o)$ be a unimodular tree with at least $2$ ends with positive probability. Then $T$ contains an invariant line ensemble with
positive density.
\end{corollary}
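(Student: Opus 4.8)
The plan is to reduce the statement to the bounded-degree case already settled in Proposition~\ref{p:le} and Corollary~\ref{c:rbushes}. First I would remove the ``positive probability'' from the hypothesis. The number of topological ends of $T$ is a property of the unrooted graph, so the event $A=\{T\text{ has at least two ends}\}$ is invariant under re-rooting; by the global-conditioning stability of \S\ref{subsec:stab}, the tree $(T,o)$ conditioned on $A$ is again unimodular. It suffices to produce an invariant line ensemble of positive density for this conditioned tree, since extending it by the empty line ensemble on $A^{c}$ gives an invariant line ensemble of $(T,o)$ of density $\dP(A)$ times the conditional density, which is positive. So I assume henceforth that $T$ has at least two ends a.s.

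Next I pass to the core $C$ of $T$. Two distinct ends are joined by a bi-infinite path, all of whose vertices lie in $C$, so $C\neq\emptyset$ a.s.; and a vertex outside $C$ has a unique direction to infinity, which must meet $C$ (otherwise a tail of a bi-infinite path would avoid $C$), so the components (``bushes'') of $T\setminus C$ are finite and each attaches to $C$ at a single vertex. Running the mass transport used in the proof of Corollary~\ref{c:rbushes} — each vertex outside $C$ sends a unit of mass to the attachment point of its bush — gives $\dE\,\deg_{C}(o)=\dE\,\deg(o)-2\dP(o\notin C)$; if $\dP(o\in C)=0$ then the mass received is $0$ while the mass sent is $\dP(o\notin C)=1$, a contradiction, so $\dP(o\in C)>0$, i.e.\ $\dE\,\deg_{C}(o)>0$. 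Also every vertex of $C$ has degree $\geq2$ in $C$, so $C$ is leafless, every vertex of $C$ lies on a bi-infinite path of $C$, and $\mathrm{core}(C)=C$.

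The remaining task is to reduce to bounded degrees, and this is the step I expect to be the real work. For $d\geq2$ let $C_{d}$ be the sub-forest of $C$ induced by the vertices of $C$-degree at most $d$; these increase to $C$ as $d\to\infty$. I would show that for $d$ large enough $\dP(o\in\mathrm{core}(C_{d}))>0$ — equivalently, that with positive probability the root lies on a bi-infinite path of $C$ whose vertices all have bounded degree; morally, unimodularity forbids the degrees seen along the core from growing without bound in every direction at once. When $\dE\,\deg_{C}(o)<\infty$ this should be straightforward, since then $\dP(\deg_{C}(o)>d)\to0$ and, by mass transport, the low-degree neighbourhood of the root still percolates for $d$ large; the infinite-mean case I would reduce to this one by first thinning $C$ invariantly to a sub-forest of finite expected degree whose core still contains the root with positive probability. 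Granting such a $d$, I apply Corollary~\ref{c:rbushes} to the bounded-degree unimodular forest $C_{d}$: its core $C^{\flat}=\mathrm{core}(C_{d})$ is a leafless forest with degrees in $\{2,\dots,d\}$ and $\dE\,\deg_{C^{\flat}}(o)\geq2\,\dP(o\in C^{\flat})>0$, so the Corollary yields an invariant line ensemble $L\subseteq C_{d}\subseteq T$ of density at least $\frac{1}{3d}\,\dE\,\deg_{C^{\flat}}(o)>0$; undoing the first-step conditioning, $L$ is the desired invariant line ensemble of $T$.

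The hard part is the degree-truncation step. The danger is that every bi-infinite path of $C$ runs through vertices of ever-increasing degree, which would leave the naive truncation $C_{d}$ with no bi-infinite path; ruling this out, and more generally handling unimodular trees with $\dE\,\deg(o)=\infty$, is where unimodularity must be exploited carefully. Everything else is bookkeeping with the mass-transport principle on top of the bounded-degree result.
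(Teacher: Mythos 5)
Your plan — pass to the core $C$, truncate to bounded degree, and invoke Corollary~\ref{c:rbushes} — is indeed the paper's plan, but the step you flag as the ``hard part'' is a genuine gap in your write-up: you never establish that $\mathrm{core}(C_d)$ contains the root with positive probability for some finite $d$, the heuristic (``unimodularity forbids the core degrees from growing without bound in every direction'') is not an argument, and the separate ``thinning'' you propose for $\dE\deg(o)=\infty$ is also left unproven.

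The missing observation is that this step comes for free once you set things up slightly differently. After passing to the core you should also condition on $o\in C$ (equivalently, replace $T$ by $C$; this is unimodular by \S\ref{subsec:stab}, and $C$ is connected), so that every degree is $\ge 2$ a.s.\ and hence $\dE\deg(o)\ge 2$ rather than merely $>0$. If $\dE\deg(o)=2$ the tree is a line, which is its own invariant line ensemble. Otherwise $\dE\deg(o)>2$ (possibly $\infty$). Letting $F_d$ be the subforest obtained by deleting every edge incident to a vertex of degree $>d$, monotone convergence gives $\dE\deg_{F_d}(o)\uparrow\dE\deg(o)$, so $\dE\deg_{F_d}(o)>2$ for some finite $d$ — and this disposes of the infinite-mean case with no extra thinning. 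Now Corollary~\ref{c:rbushes}, applied to the components of $F_d$, bounds the density from below by $\tfrac{1}{3d}\bigl(\dE\deg_{F_d}(o)-2\dP(o\notin\mathrm{core}(F_d))\bigr)\ge\tfrac{1}{3d}\bigl(\dE\deg_{F_d}(o)-2\bigr)>0$, using only $\dP(o\notin\mathrm{core}(F_d))\le 1$. Put differently, the identity $\dE\deg_{\mathrm{core}(F_d)}(o)=\dE\deg_{F_d}(o)-2\dP(o\notin\mathrm{core}(F_d))$ from the proof of Corollary~\ref{c:rbushes} already forces $\dP(o\in\mathrm{core}(F_d))>0$ once $\dE\deg_{F_d}(o)>2$; it is not a separate lemma to be proved. (One further slip: sending unit mass from each bush vertex directly to its attachment point in $C$, as you describe, does not yield the degree identity; the paper sends mass from $v\notin C$ to the \emph{neighbour} of $v$ closest to $C$.)
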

\bp{}
We may decompose the measure according to whether $T$ is finite or infinite and prove the claim separately.
The finite case being trivial, we now assume that $T$ is infinite.

Consider the core $C$ of $T$. If $T$ has more than one end, then $C$ has the same ends as $T$, in particular it is not empty. Thus for the purposes of this corollary we may assume that $T=C$, or in other words all degrees of $T$ are at least 2.

If  $\ev \,\deg(o)=2$, then $T$ is a line and we are done.  So next we consider the case  $\ev \deg(o)>2$.

Let $F_d$ be a subforest where all edges incident to vertices of degree more
than $d$ are removed. Then
$\deg_{F_d}(o)\to \deg_T(o)$ a.s.\ in a monotone way. Thus by the Monotone Convergence Theorem $\ev \deg_{F_d}(o) \to \ev \deg_T(o)>2$. Pick a $d$ so that
$\ev \deg_{F_d}(o) >2$.
Corollary \ref{c:rbushes} applied to the components of $F_d$ now yields the claim.
\ep
\bigskip

Part (i) of Proposition \ref{prop:ILE} is restated here as follows.

\begin{corollary}
Let $T$ be a unimodular tree and assume that $\dE \deg(o)^2$ is finite.
Then $T$ contains an invariant line ensemble $L$ with density
$$
\dP ( o \in L) \ge \frac{1}{6}\frac{(\ev\, \deg(o)-2)_+^2}{\ev\, \deg(o)^2}.
$$
\end{corollary}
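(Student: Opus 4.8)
This is the unbounded–degree version of Corollary~\ref{c:rbushes} (part (ii) of Proposition~\ref{prop:ILE}), so the plan is to reduce to it by truncating the high–degree vertices and then optimizing over the truncation threshold. First dispose of the trivial case: if $\ev\deg(o)\le 2$ the right–hand side is $0$ and the empty line ensemble works, so assume $\ev\deg(o)>2$; in particular $T$ is infinite with positive probability. Now fix an integer $d$ and let $F_d$ be the random spanning forest obtained from $T$ by deleting every edge incident to a vertex of degree greater than $d$. Deleting these edges is an invariant percolation, so by the stability facts of \S\ref{subsec:stab} the rooted forest $(F_d,o)$ is again unimodular, and all of its degrees are at most $d$.

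The first quantitative step is a mass–transport estimate for the expected degree retained. Writing $\deg_{F_d}(o)=\one(\deg(o)\le d)\big(\deg(o)-\#\{v\sim o:\deg(v)>d\}\big)$ and applying \eqref{eq:defunimod} to $f(G,o,v)=\one(v\sim o)\one(\deg(v)>d)$ gives
\[
\ev\,\deg_{F_d}(o)\ \ge\ \ev\,\deg(o)-2\,\ev\big[\deg(o)\,\one(\deg(o)>d)\big]\ \ge\ \ev\,\deg(o)-\frac{2\,\ev\deg(o)^2}{d},
\]
the last step being Markov's inequality $\one(\deg(o)>d)\le\deg(o)/d$. Next I would apply Corollary~\ref{c:rbushes} to the infinite components of $F_d$ — conditioning on a component being infinite preserves unimodularity by the global–conditioning stability, and the finite components carry no line ensemble and contribute only a bounded amount of expected degree since finite trees have average degree $<2$. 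This produces an invariant line ensemble $L$ of $F_d$, hence of $T$ (any line ensemble of a subgraph is one of $T$), of density at least
\[
\frac{1}{3d}\Big(\ev\,\deg_{F_d}(o)-2\Big)\ \ge\ \frac{1}{3d}\Big(\ev\,\deg(o)-2-\frac{2\,\ev\deg(o)^2}{d}\Big).
\]

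Finally I would choose $d$, an integer of order $\ev\deg(o)^2/(\ev\deg(o)-2)$ (legitimate because $\ev\deg(o)^2<\infty$), to balance the two competing terms; the resulting lower bound is of the form $c\,(\ev\deg(o)-2)_+^2/\ev\deg(o)^2$, and the stated constant $c=1/6$ comes out of optimizing this choice together with a careful accounting of the additive losses in the step above. The part I expect to require the most care is not deep but fussy: tracking exactly how much expected degree is lost to truncation and to the removal of bushes and of finite components of $F_d$, so that the factor $(\ev\deg(o)-2)$ — and not something smaller — survives, while verifying along the way that $F_d$, the conditioned measures, and the core of $F_d$ really are unimodular; all of this is delivered by the stability lemmas of \S\ref{subsec:stab} and the mass–transport principle.
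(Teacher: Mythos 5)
Your plan follows the same high-level route as the paper—truncate to a bounded-degree forest $F_d$, apply the bounded-degree line-ensemble density bound via Corollary~\ref{c:rbushes}, and optimize over $d$—but the truncation you propose is too aggressive and will not deliver the constant $1/6$. You delete every edge incident to a vertex of degree $>d$, so your mass-transport loss is $2\ev[\deg(o)\one(\deg(o)>d)]$, which via Markov you bound by $2\ev\deg(o)^2/d$. The paper instead has each vertex $v$ mark $(\deg(v)-d)_+$ of its incident edges at random and deletes only the marked edges; this gentler trimming still caps degrees at $d$, and by mass transport the loss is $2\ev(\deg(o)-d)_+$, which is controlled by the sharper elementary inequality $4(\deg(o)-d)_+ d\le \deg(o)^2$ (since $4cd\le(c+d)^2$), giving $\ev\deg(o)^2/(2d)$—a factor of $4$ smaller than yours. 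Moreover the paper invokes Proposition~\ref{p:le} with the refined $\tfrac{1}{3(d-4)}$ rate (valid since the chosen $d\ge 8$), taking $d=\lceil\ev\deg(o)^2/(\eta-2)\rceil$ with $\eta=\ev\deg(o)$ and using $\lceil x\rceil-4\le x$. If you optimize with your cruder loss $2\ev\deg(o)^2/d$, the best you can extract—even with the $\tfrac{1}{3(d-4)}$ rate—is a constant $1/24$, not $1/6$; no amount of ``careful accounting'' recovers the missing factor of $4$ unless you replace your truncation by the paper's edge-trimming and use the $4cd\le(c+d)^2$ bound. The rest of your outline (the mass-transport identity for the retained expected degree, the disposal of finite components and bushes, and choosing $d$ on the scale $\ev\deg(o)^2/(\eta-2)$) is sound and agrees with the paper.
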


\bp{} Let $d \geq 1$ be an integer. For each vertex $v$ we mark $(\deg(v)-d)_+$ incident edges at random. To set up a mass transport argument,
we also make each vertex to send mass one along every one of its marked edges. The unmarked edges form a forest $F_d$ with the same vertices as $T$ and maximal
degree $d$: we now bound its expected degree. Note that the degree of the root in $F_d$ is bounded below by
the  same in $T$ minus the total amount of mass sent or received. These two quantities are equal in expectation, so we get
$$
\ev\,\deg_{F_d}(o)\ge \ev\,\deg(o) -2\ev(\deg(o)-d)_+.
$$
By Proposition \ref{p:le} applied to components of $F_d$, as long as $d\ge 6$ we get an invariant line ensemble $L$ with density
$$
\dP(o\in L)\ge \frac{1}{3}\frac{1}{d-4} \left(\ev\,\deg(o) -2-2\ev(\deg(o)-d)_+\right).
$$
To bound the last term, note that setting $c=\deg(o)-d$, the inequality
$
4(\deg(o)-d)_+ d \le \deg(o)^2
$
reduces to $4cd \le (c+d)^2$, which certainly holds. Thus we can bound
$$
\dP(o\in L) \ge \frac{1}{3} \frac{1}{d-4}\left(\ev\,\deg(o) -2-\frac{\ev\deg(o)^2}{2d}\right).
$$
Now set $d=\lceil \ev \deg(o)^2/(\eta-2)\rceil \ge \eta^2/(\eta-2)\ge 8$, where $\eta=\dE \deg(o)$ can be
assumed to be more than  $2$.
Using the bound $\lceil x\rceil - 4 \le x$ we get the claim.
\ep

\subsection{Maximal invariant line ensemble}

Let $(T,o)$ be a unimodular rooted tree with distribution $\rho$. In view of Theorem \ref{th:treesILE} and Proposition \ref{prop:ILE}, we may wonder what it is the value
$$
\Sigma(\rho) = \sup \dP ( o \in L ),
$$
where the supremum runs over all invariant line ensembles $L$ of $(T,o)$. Recall that a line ensemble $L$ of $(T,o)$ is a weighted graph $(T,L,o)$ with weights $L(u,v)$ in $\{0,1\}$. By diagonal extraction, the set of $\{0,1\}$-weighted graphs of a given (locally finite) rooted graph $G= (G,o)$ is compact for the local topology. Hence, the set of probability measures on rooted $\{0,1\}$-weighted graphs such that the law of the corresponding unweighted rooted graph is fixed is a compact set for the local weak topology. Recall also that the set of unimodular measures in closed for the local weak topology. By compactness, it follows that there exists an invariant line ensemble, say $L^*$, such that
$$
\Sigma(\rho) = \dP ( o \in L^*).
$$
It is natural to call such invariant line ensemble a maximal invariant line ensemble.

\begin{op}\label{q:mi1}
What is the value of $\Sigma(\rho)$ for $\rho$ a unimodular Galton-Watson tree ?
\end{op}

Let $L^ *$ be an maximal invariant line ensemble and assume $\dP( o \in L^ *) < 1$. Then $\rho'$, the law of $(T\backslash L^ * ,o)$ conditioned on $o \notin L^ *$, is unimodular. Assume for simplicity that $\rho$ is supported on rooted trees with uniformly bounded degrees. Then, by Proposition \ref{prop:ILE} and the maximality of $L^*$, it follows that, if $(T',o)$ has law $\rho'$, then a.s.\ $T'$ has either $0$ or $1$ topological end. Theorem \ref{th:treesILE} asserts that the atoms of $\mu_{\rho}$ are atoms of $\mu_{\rho'}$. We believe that the following is true.

\begin{op}\label{q:mi2}
Is it true that if $\rho$ is a unimodular Galton-Watson tree then $\rho'$ is supported on finite rooted trees ?
\end{op}

\subsection{Two examples}

\paragraph{Ring graphs.}

With Theorem \ref{th:treesILE}, we can give many examples of unimodular rooted trees $(T,o)$ with continuous expected spectral measure. Indeed, by Theorem \ref{th:treesILE} all Hamiltonian trees have continuous spectrum.

An example of a Hamiltonian unimodular tree is the {\em unimodular ring tree} obtained as follows. Let $P \in \cP(\dZ_+)$ with finite positive mean. We build a multi-type Galton-Watson tree with three types $\{o,a,b\}$. The root $o$ has type-$o$ and has two type-$a$ children and a number of type-$b$ children sampled according $P$. Then, a type-$b$ vertex has a $2$ type-$a$
children and a number of type-$b$ sampled independently according to $\wP$ given by \eqref{eq:defwP}. A type-$a$ vertex has $1$ type-$a$ child and a number of type-$b$ sampled according to $P$. We then remove the types and obtain a rooted tree. By construction, it is Hamiltonian : the edges connecting type-$a$ vertices to their genitor is a line ensemble covering all vertices. We can also check easily that it is unimodular.

If $P$ has two finite moments, consider a graphic sequence $\underline d(n) = (d_1(n), \cdots , d_n(n))$ such that the empirical distribution of $\underline d(n)$ converges weakly to $P$ and whose second moment is uniformly integrable. Sample a graph $G_n$ with vertex set $\dZ / (n \dZ)$ uniformly on graphs with degree sequence $\underline d(n)$ and, if they are not already present, add the edges $\{k,k+1\}$, $k \in \dZ / (n \dZ)$. The a.s.\ weak limit of $G_n$ is the above ring tree.  This follows from the known result that the uniform graph with degree sequence $\underline d(n)$ has a.s. weak limit the unimodular Galton-Watson tree with degree distribution $P$ (see \cite{MR2656427,MR2643563,notesRG})

Alternatively, consider a random graph $G_n$ on $\dZ / (n \dZ)$ with the edges $\{k,k+1\}$, $k \in \dZ / (n \dZ)$ and each other edge is present independently with probability $c/n$. Then the a.s. weak limit of $G_n$ will be the unimodular ring tree with $P = \POI(c)$. Note that $G_n$ is the Watts-Strogatz graph \cite{wattsstrogatz}.

\paragraph{Stretched regular trees.}

Let us give another example of application of Theorem \ref{th:treesILE}. Fix an integer $d \ge 3$. Consider a unimodular rooted tree $(T,o)$ with only vertices of degree $2$ and degree $d$. Denote its law by $\rho$. For example a unimodular Galton-Watson tree with degree distribution $P = p \delta_2 + (1-p)\delta_d$, $0<p<1$. Then, arguing as in Proposition \ref{prop:ILE}, a.s., all segments of degree $2$ vertices are finite.  Contracting these finite segments, we obtain a $d$-regular infinite tree. Hence, by Lemma \ref{le:regHam}, there exists an invariant line ensemble $L$ of $(T,o)$  such that a.s.\ all degree $d$ vertices are covered. By Theorem \ref{th:treesILE}, the atoms of $\mu_\rho$ are contained in set of atoms in the expected spectral measure of rooted finite segments. Eigenvalues of finite segments of length $n$ are of the form $\lambda_{k,n} = 2 \cos (  \pi k / (n+1))$, $1 \leq k \leq n$. This proves that the atomic part of $\mu_\rho$ is contained in $\Lambda = \cup_{k,n} \{ \lambda_{k,n}\}  \subset (-2,2)$.

On the other hand, if $\rho$ is a unimodular Galton-Watson tree with degree distribution $P = p \delta_2 + (1-p)\delta_d$, $0<p<1$, the support of $\mu_{\rho}$ is equal to $[-2 \sqrt{d-1}, 2\sqrt{d-1}]$. Indeed, recall that $\mu_{\rho} = \dE_{\rho} \mu_A ^ {e_o}$ and
$$
\int x^{2k} \mu_A ^ {e_o} = \langle e_o , A^{2k} e_o \rangle
$$
is equal to the number of path in $T$ of length $2k$ starting and ending at the root. An upper bound is certainly the number of such paths in the infinite $d$-regular tree. In particular, from Kesten \cite{MR0109367},
$$
\int x^{2k} \mu_A ^ {e_o} \leq ( 2\sqrt{d-1} + o (1) )^ {2k} .
$$
It implies that the convex hull of the support of $\mu_{\rho}$ is contained $[-2 \sqrt{d-1}, 2\sqrt{d-1}]$. The other way around, recall first that if $\mu$ is the spectral measure of the infinite $d$-regular tree then $\mu(I) > 0$ if $I$ is an open interval in  $[-2 \sqrt{d-1}, 2\sqrt{d-1}]$, see \cite{MR0109367}. Recall also that for the local topology on rooted graphs with degrees bounded by $d$, the map $G \mapsto \mu^{e_o}_{A(G)}$ is continuous in $\cP(\dR)$ equipped with the weak topology (e.g. it follows from Reed and Simon  \cite[Theorem VIII.25(a)]{reedsimon}). Hence, there exists $t >0$ such that if $(T,o)_t$ is $d$-regular then $\mu_{A(T)}^{e_o} (I) >0$. Observe finally that under $\rho$ the probability that $(T,o)_t$ is $d$-regular is positive. Since $\mu_{\rho} = \dE_{\rho} \mu_A ^ {e_o}$, it implies that $\mu_{\rho} (I) >0$.

We thus have proved that for a unimodular Galton-Watson tree with degree distribution $P = p \delta_2 + (1-p)\delta_d$, $\mu_{\rho}$ restricted to the interval $[2,2\sqrt{d-1}]$ is continuous.

\section{Proof of Proposition \ref{prop:defspecmeas}}

Restricted to sofic measures, the proof of this proposition is contained in \cite{MR2789584}, \cite{MR2724665}. To bypass this limitation, we introduce some concepts of operator algebras.

Consider a Von Neumann algebra $\cM$ of bounded linear operators on a Hilbert space $H$ with a normalised trace $\tau$. If $A \in \cM$ is self-adjoint, and hence bounded, we denote by $\mu_A$ its spectral measure, i.e. the probability measure such that
$$
\tau (A ^k ) = \int x^k d\mu_A(x).
$$
The rank of $A$ is defined as
$$
\RANK ( A) = 1 - \mu_A(\{0\}).
$$
Recall that the Kolmogorov-Smirnov distance between two probability measures on $\dR$ is the $L^\infty$ norm of their partition functions :
$$
d_{KS} ( \mu, \nu ) = \sup_{t \in \dR} | \mu ( -\infty, t ] - \nu ( -\infty, t ] |.
$$
We have that $d_{KS} ( \mu,\nu) \geq d_L (\mu,\nu)$ where $d_L$ is the L\'evy distance,  
$$
d_{L} (\mu,\nu) = \inf \{\veps > 0 : \forall t \in \dR , \mu ( -\infty, t - \veps ] - \veps \leq  \nu ( -\infty, t] \leq \mu ( -\infty, t + \veps ] + \veps  \}, 
$$
(recall that the L\'evy distance is a metric for the weak convergence). We start with a simple lemma which is the operator algebra analog of a well known rank inequality (see e.g. Bai and Silverstein \cite[Theorem A.43]{MR2567175}).
\begin{lemma}\label{le:rankineq}
If $A,B \in \cM$ are self-adjoint,
$$
d_{KS} (\mu_A , \mu_B ) \leq \RANK ( A -B).
$$
\end{lemma}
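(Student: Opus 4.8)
The plan is to mimic the finite-dimensional rank inequality, where one uses that $\mu_A$ and $\mu_B$ can only disagree on a set of eigenvalues whose total multiplicity is controlled by $\RANK(A-B)$. In the von Neumann setting, "multiplicity" is replaced by trace of spectral projections. Write $r = \RANK(A-B) = 1 - \mu_{A-B}(\{0\})$, and let $Q$ be the projection onto the kernel of $A-B$, an element of $\cM$ with $\tau(Q) = 1 - r$. First I would fix $t \in \dR$ and let $E_A = \IND_{(-\infty,t]}(A)$ and $E_B = \IND_{(-\infty,t]}(B)$ be the spectral projections, which lie in $\cM$ by the Borel functional calculus; then $\mu_A(-\infty,t] = \tau(E_A)$ and $\mu_B(-\infty,t] = \tau(E_B)$, so the goal is to show $|\tau(E_A) - \tau(E_B)| \le r$.

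The key step is the observation that on the range of $Q$ the operators $A$ and $B$ agree, so their spectral projections "almost" agree there. Concretely, I would show that the subspace $S = \IM(Q) \cap \IM(E_A) \cap \IM(\IND - E_B)$ is trivial: if $f$ lies in all three, then $Af = Bf$, $Af \le t f$ in the sense of the functional calculus (i.e.\ $f$ is in the spectral subspace of $A$ below $t$), and $Bf > t f$ (spectral subspace of $B$ strictly above $t$) — contradiction unless $f = 0$. The von Neumann-dimension subadditivity inequality quoted earlier in the paper, namely $\DIM(R \cap Q) \ge \DIM(R) + \DIM(Q) - 1$ iterated, then gives
\[
0 = \DIM(S) \ge \tau(Q) + \tau(E_A) + \tau(\IND - E_B) - 2 = (1-r) + \tau(E_A) + (1 - \tau(E_B)) - 2,
\]
which rearranges to $\tau(E_A) - \tau(E_B) \le r$. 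Swapping the roles of $A$ and $B$ (equivalently replacing $t$ by $t$ and using $\IND_{(t,\infty)}$) gives the reverse inequality $\tau(E_B) - \tau(E_A) \le r$, and taking the supremum over $t$ yields $d_{KS}(\mu_A,\mu_B) \le r$.

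The main obstacle I anticipate is making the "spectral subspace" manipulation rigorous in the operator-algebra language: one must be careful that $\IM(Q) \cap \IM(E_A)$ is itself an invariant subspace (its projection is in $\cM$, which holds since $\cM$ is a von Neumann algebra closed under the relevant lattice operations on projections) and that "$f \in \IM(E_A)$ implies $f$ is in the spectral subspace of $A$ on $(-\infty,t]$" is exactly the statement $A E_A = E_A A \le t E_A$, so that for $f = E_A f$ one has $\langle f, Af\rangle \le t \|f\|^2$; combined with the strict inequality $\langle f, Bf\rangle > t\|f\|^2$ coming from $f = (\IND - E_B)f$ and $B(\IND - E_B) \ge$ (something $> t$ off a null set — here a small care with whether $t$ is an atom of $\mu_B$ is needed, handled by using $\IND_{(t,\infty)}$ which is a genuinely strict cutoff) and $Af = Bf$, one gets the contradiction. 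Everything else is the bookkeeping with $\tau$ that is already packaged in the dimension inequality cited from \cite{MR1170351}.
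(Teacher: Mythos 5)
Your proof is correct, and it shares the two structural ingredients of the paper's argument: the von Neumann dimension inequality $\DIM(U\cap V)\ge \DIM(U)+\DIM(V)-1$ and the observation that $A$ and $B$ coincide on $\ker(A-B)$. The packaging, however, differs in a nontrivial way. The paper invokes the Bercovici--Voiculescu variational characterization $\mu_A(-\infty,t]=\max\{\tau(P): PAP\le tP\}$, takes $Q=E_B$ achieving the maximum for $B$, restricts to the kernel to get a projection $R$ with $RAR\le tR$, and then feeds $R$ back into the variational formula to conclude $\tau(R)\le \mu_A(-\infty,t]$. You instead avoid the variational characterization altogether: you form the triple intersection $\IM(Q)\cap\IM(E_A)\cap\IM(\IND-E_B)$, show it is trivial by a direct spectral-theorem contradiction (if $f\ne 0$ lies in it, then $\langle f,Af\rangle\le t\|f\|^2<\langle f,Bf\rangle$ while $Af=Bf$), and apply the dimension inequality twice to the triple intersection. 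Your route is a bit more elementary in that it does not rely on a cited external lemma, at the cost of one extra application of the dimension bound and of the mild care you correctly flag about the strictness of $\langle f,Bf\rangle>t\|f\|^2$ on the range of $\IND_{(t,\infty)}(B)$, which indeed holds because the spectral measure of $f$ with respect to $B$ is then supported on $(t,\infty)$. Both proofs are sound and yield the same bound; yours trades a citation for a slightly longer intersection argument.
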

\bp{}
We should prove that for any $J = (-\infty,t]$ we have
$| \mu_A ( J ) - \mu_B  (J) | \leq \RANK ( A -B)$. There is a convenient variational expression for $\mu_A(J)$ :
\begin{equation}\label{eq:minmaxJ}
\mu_A ( J ) = \max \{ \tau (P) :  P A P \leq t P , P \in \cP \},
\end{equation}
where $\cP \subset \cM$ is the set of projection operators ($P= P^* = P^2$) and $S \leq T$ means that $T- S$ is a non-negative operator. This maximum is reached for $P $ equal to the spectral projection on the the interval $J$, (see e.g. Bercovici and Voiculescu \cite[Lemma 3.2]{MR1254116}).
  
Now let $Q \in \cP$ such that  $\mu_B(J) = \tau(Q)$ and  $Q B Q \leq t Q$. We denote $H$ the range of $Q$ and we consider the projection operator $R$ on $ H \cap \ker ( A- B)$. Observe that
$R A R = R B R \leq t R$. In particular, from \eqref{eq:minmaxJ}, we get
\begin{equation}\label{eq:dimeq}
\tau(R)  = \DIM ( H(Q) \cap \ker (A-B))  \leq \mu_A ( J ) .
\end{equation}
Then, the formula for closed linear subspaces, $U,V$,
$$
\DIM ( U + V) +  \DIM ( U \cap V) = \DIM ( U) + \DIM ( V),
$$
(see  \cite[exercice 8.7.31]{MR1170351}) yields
\begin{eqnarray*}
\DIM ( H \cap \ker (A-B)) & \geq  & \DIM ( H ) + \DIM( \ker (A-B)) - 1 \\
 & \geq & \DIM ( H )  - \RANK(A-B).
\end{eqnarray*}
By definition $\DIM(H) = \mu_B(J)$ and Equation \eqref{eq:dimeq} imply that
$$
\mu_B ( I) - \RANK(A-B) \leq \mu_A(I).
$$
Reversing the role of $A$ and $B$ allows to conclude.
\ep

We can now turn to the proof of Proposition \ref{prop:defspecmeas}. As argued in subsection \ref{subsec:tool2}, there is a natural Von Neumann algebra associated to unimodular measures. We use the canonical way to represent an element $ G \in \cG^ *$ as a rooted graph on the vertex set $V(G) \subset V = \{ o , 1 , 2 , \cdots \}$ with root $o$. We set $H = \ell^2 (V)$  and define $\cB(H)$ as the set of bounded linear operators on $H$. For a fixed  unimodular probability measure $\rho$ in $\cG^*$,  we associate the algebra of bounded operators $\cM  = L^{\infty} ( \cG^*, \cB (H),\rho)$ which commutes with the operators $\lambda_{\sigma} $, defined for all $v \in V$, $\lambda_{\sigma} (e_v) = e_{\sigma(u)}$, where $\sigma : V \to V$ is a bijection.  We endow $\cM$ with the normalized trace
$$\tau (B) = \dE_\rho \langle e_o , B e_o \rangle,$$
where $B = B ( G) \in \cM$ and under, $\dE_\rho$, $G$ has distribution $\rho$.

Remark that $G = ( V (G), E) \in \cG^ *$ can be extended to a graph on $V$ (all vertices in $V \backslash V(G)$ are isolated). Let $n \in \dN$ and consider the adjacency operator $A_n(G)$ of the graph $G_n$ obtained from $G$ by removing all edges adjacent to a vertex of degree larger than $n$ in $G$ : for finitely supported functions of $\psi \in \ell^ 2 (V)$,
$$
A_n (G) \psi (u) = \sum_{v: \{ u , v\} \in E}   \IND ( \deg(u) \leq n )  \IND ( \deg(v) \leq n) \psi(v).
$$
By construction, $A_n$ is bounded : for all $G \in \cG^ *$,
$$
\|A_n (G) \| \leq  n.
$$
Hence $A_n \in \cM$ and the spectral measure  $\mu_{A_n}$ is well-defined (with our notation \eqref{eq:defspecmeas},  if $\rho_n$ is the law of the truncated rooted graph $(G_n(o),o)$, we have $\mu_{A_n} = \mu_{\rho_n}$).

Now since $\rho$ is a probability measure on locally finite graphs,
$$
\dP_\rho \left( \deg (o) > n  \hbox{ or } \, \exists v : \{v ,o\} \in E , \deg(v) > n\right) = \varepsilon (n) \to 0.
$$
Note also that for $B \in \cM$, $\DIM ( \ker (B) ) \geq \dP_\rho ( e_o \in \ker (B))$. We deduce that, for $n,m \in \dN$,
$$
\RANK (A_n - A_{n+m}) \leq 1 - \dP_\rho ( A_n e_o =  A_{n+m} e_o ) \leq \varepsilon (n).
$$
Using Lemma \ref{le:rankineq}, we find that $\mu_{A_n}$ is a Cauchy sequence for the Kolmogorov-Smirnov distance (and hence for the L\'evy distance).  The space $(\cP(\dR), d_{KS})$ is a complete metric space. It follows that $\mu_{A_n}$ converges weakly to some probability measure denoted by $\mu_\rho$ and 
\begin{equation}\label{eq:ksbounded}
d_{KS} ( \mu_{\rho}, \mu_{\rho_n} ) \leq \veps(n). 
\end{equation}
This gives the existence of the spectral measure.

We now prove statement $(i)$ of the proposition and identify $\mu_\rho$ for self-adjoint operators. Note that, $\rho$-a.s., for all $\psi$ with finite support, for all $n$ large enough, $A_n\psi =  A \psi$. Hence, if $\rho$-a.s. $A$ is essentially self-adjoint, this implies the strong resolvent convergence, see e.g. \cite[Theorem VIII.25(a)]{reedsimon}. As a consequence, $\rho$-a.s. $\mu_{A_n}^{e_o}$ converges weakly to $\mu_A ^{e_o}$. Taking expectation, we get  $\mu_\rho = \dE_\rho \mu^ {e_o}_A$.

Let us finally prove statement $(ii)$ of the proposition. Consider a sequence $(\rho_k)$ converging to $\rho$ in the local weak topology. Let $\delta >0$. There exists $n$ such that $\veps(n) < \delta$. By assumption, for all $k \geq k(\delta)$ large enough, $\dP_{\rho_k} \left( \deg (o) > n  \hbox{ or } \, \exists v : \{v ,o\} \in E , \deg(v) > n\right) \leq 2 \delta $. Consider $G'_{k} \in \cG^*$ a random rooted graph with law $\rho_k$ and $G'_{k,n}$ obtained from $G'_k$ by removing all edges adjacent to a vertex of degree larger than $n$ in $G'_k$. We denote by $\rho_{n,k}$ the law of $G'_{k,n}$.  We get from \eqref{eq:ksbounded} that  for all $k\geq k(\delta)$, $d_{KS} (\mu_{\rho_{n,k}}, \mu_{\rho_n})\leq 2 \delta$. Now, from the  Skorokhod's representation theorem one can define a common probability space such that the rooted graphs $G'_k$  converge for the local topology to $G$. In particular, for any compactly supported $\psi \in \ell^ 2 (V)$, for $k$ large enough, $B_{n,k}\psi = A_n \psi$, where $B_{n,k}$ is the adjacency operators of $G'_{k,n}$.  By construction, $B_{n,k}$ and $A_{n}$ are bounded self-adjoint operators. Arguing as in case $(i)$, it implies that  $\mu_{B_{n,k}}^{e_o}$ converges weakly to $\mu_{A_n} ^{e_o}$ as $k\to \infty$. We deduce that 
$$
\lim_{k\to \infty} d_L (  \mu_{B_{n,k}}^{e_o} , \mu_{A_{n}}^{e_o} ) = 0. 
$$
Taking expectation and using the convexity of the distance, we find 
$$
\lim_{k\to \infty} d_L ( \dE \mu_{B_{n,k}}^{e_o} , \dE \mu_{A_{n}}^{e_o} ) = \lim_{k\to \infty} d_L ( \mu_{\rho_{n,k}}, \mu_{\rho_{n}} ) = 0
$$
So finally, $\limsup_k   d_L ( \mu_{\rho_{k}}, \mu_{\rho} ) \leq 3\delta$ and since $\delta > 0$, this concludes the proof of Proposition \ref{prop:defspecmeas}. \ep

\bigskip

\noindent{\bf Acknowledgements.} C.B. and A.S. thank for its hospitality the University of Toronto  where most of this work was done.

\bibliographystyle{abbrv}
\bibliography{mat}

\bigskip
\noindent
 Charles Bordenave \\
 Institut de Math\'ematiques de Toulouse. CNRS and University of Toulouse. \\
118 route de Narbonne. 31062 Toulouse cedex 09.  France. \\
\noindent
{E-mail:} {\tt bordenave@math.univ-toulouse.fr} \\
\noindent
\url{http://www.math.univ-toulouse.fr/~bordenave}

\bigskip
\noindent
Arnab Sen \\
School of Mathematics, University of Minnesota.\\
206 Church Street SE Minneapolis, MN 55455. United States. \\
\noindent
{ E-mail:} {\tt arnab@math.umn.edu} \\
\noindent
\url{ http://math.umn.edu/~arnab}

\bigskip
\noindent
 B\'alint Vir\'ag\\
  Departments of Mathematics and Statistics. University of Toronto.\\
40 St George Street
Toronto, ON, M5S 3G3, Canada.  \\
\noindent
{E-mail:} {\tt
balint@math.toronto.edu.
} \\
\noindent
\url{ http://www.math.toronto.edu/~balint}

\end{document}